\documentclass[final,hidelinks]{siamart220329}

\usepackage{amsfonts,amssymb}
\usepackage{graphicx,float,enumerate}


\newsiamremark{remark}{Remark}

\headers{An explicit and symmetric EWI for NLSE}{W. Bao, and C. Wang}

\title{An explicit and symmetric exponential wave integrator 
	for the nonlinear Schr\"{o}dinger equation with low regularity potential and nonlinearity\thanks{Submitted to the editors DATE.
		\funding{The first author was partially supported by the Ministry of Education of Singapore under its AcRF Tier 2 funding MOE-T2EP20122-0002 (A-8000962-00-00).}}}

\author{Weizhu Bao\thanks{Department of Mathematics, National University of Singapore, Singapore 119076
		(\email{matbaowz@nus.edu.sg}, \email{e0546091@u.nus.edu})} 
	\and Chushan Wang\footnotemark[2]}

\usepackage{amsopn}

\newcommand{\R}{\mathbb{R}}
\newcommand{\C}{\mathbb{C}}
\newcommand{\Z}{\mathbb{Z}}
\newcommand{\vx}{\mathbf{x}}
\newcommand{\rmd}{\mathrm{d}}
\newcommand{\vep}{\varepsilon}
\newcommand{\vphi}{\varphi}
\newcommand{\psihn}[1]{\psi_N^{#1}}
\newcommand{\phiht}{\Phi_h}
\newcommand{\Phiht}{\mathbf{\Phi}_h}
\newcommand{\psin}[1]{\psi^{#1}}
\newcommand{\en}[1]{e^{#1}}
\newcommand{\ehn}[1]{e_N^{#1}}
\newcommand{\phit}{\Phi}
\newcommand{\Phit}{{\mathbf{\Phi}}}
\newcommand{\Psin}[1]{\mathbf{\Psi}^{#1}}
\newcommand{\Psihn}[1]{\mathbf{\Psi}_N^{#1}}
\newcommand{\Psitn}[1]{{\mathbf{\Psi}(t_{#1})}}
\newcommand{\En}[1]{\mathbf{e}^{#1}}
\newcommand{\Ehn}[1]{\mathbf{e}_N^{#1}}

\newcommand{\mcalL}[1]{{\mathcal{L}^{#1}}}

\newcommand{\vphis}{{\varphi_\text{s}}}
\newcommand{\vphic}{\varphi_\text{c}}
\newcommand{\Hop}{{\mathbf{H}}}
\newcommand{\Aop}[1]{\mathbf{A}(#1)}
\newcommand{\A}{\mathbf{A}}
\newcommand{\newt}{\zeta}

\DeclareMathOperator{\sinc}{sinc}

\begin{document}

\maketitle

\begin{abstract}
	We propose and analyze a novel symmetric Gautschi-type exponential wave integrator (sEWI) for the nonlinear Schr\"odinger equation (NLSE) with low regularity potential and typical power-type nonlinearity of the form $ |\psi|^{2\sigma}\psi $ with $ \psi $ being the wave function and $ \sigma > 0 $ being the exponent of the nonlinearity. 
	The sEWI is explicit and stable under a time step size restriction independent of the mesh size. 
	We rigorously establish error estimates of the sEWI under various regularity assumptions on potential and nonlinearity.  
	For ``good" potential and nonlinearity ($H^2$-potential and $\sigma \geq 1$), we establish an optimal second-order error bound in the $L^2$-norm. For low regularity potential and nonlinearity ($L^\infty$-potential and $\sigma > 0$), we obtain a first-order $L^2$-norm error bound accompanied with a uniform $H^2$-norm bound of the numerical solution. Moreover, adopting a new technique of \textit{regularity compensation oscillation} (RCO) to analyze error cancellation, for some non-resonant time steps, the optimal second-order $L^2$-norm error bound is proved under a weaker assumption on the nonlinearity: $\sigma \geq 1/2$. For all the cases, we also present corresponding fractional order error bounds in the $H^1$-norm, which is the natural norm in terms of energy. Extensive numerical results are reported to confirm our error estimates and to demonstrate the superiority of the sEWI, including much weaker regularity requirements on potential and nonlinearity, and excellent long-time behavior with near-conservation of mass and energy. 
\end{abstract}

\begin{keywords}
nonlinear Schr\"odinger equation, symmetric exponential wave integrator, low regularity potential, low regularity nonlinearity, error estimate
\end{keywords}

\begin{MSCcodes}
35Q55, 65M15, 65M70, 81Q05
\end{MSCcodes}

\section{Introduction}
The nonlinear Schr\"odinger equation (NLSE) arises in various physical applications such as quantum physics and chemistry, Bose-Einstein condensation (BEC), laser beam propagation, plasma and particle physics \cite{review_2013,ESY,NLS,LHY}. 
In this paper, we consider the following NLSE on a bounded domain $ \Omega = \Pi_{i=1}^d (a_i, b_i) \subset \R^d $ ($d= 1, 2, 3$) equipped with periodic boundary condition
\begin{equation}\label{NLSE}
	\left\{
	\begin{aligned}
		&i \partial_t \psi(\vx, t) = -\Delta \psi(\vx, t) + V(\vx) \psi(\vx, t) + f(|\psi(\vx, t)|^2) \psi(\vx, t), \quad \vx \in \Omega, \  t>0, \\
		&\psi(\vx, 0) = \psi_0(\vx), \quad \vx \in \overline{\Omega},
	\end{aligned}
	\right.
\end{equation}
where $t$ is time, $\vx\in \R^d$ is the spatial coordinate, and $ \psi:=\psi(\vx, t) $ is a complex-valued wave function.  Here, $ V:=V(\vx)\in L^\infty(\Omega) $ is a given real-valued external potential, and $f$ is assumed to be the power-type nonlinearity given by 
\begin{equation}\label{eq:f}
	f(\rho) = \beta \rho^\sigma, \quad \rho:=|\psi|^2 \geq 0,
\end{equation}
where $\beta \in \R$ is a given constant and $\sigma > 0$ is the exponent of the nonlinearity. 

There are many important dynamical properties of the solution $\psi$ to the NLSE \cref{NLSE} \cite{Ant}. The NLSE \cref{NLSE} is time reversible or symmetric, i.e., it is unchanged under the change of variable in time as $ t \rightarrow -t $ and complex conjugating the equation. It is also time transverse or gauge invariant, i.e., the equation still holds under the transformation $V \rightarrow V + \alpha$ with $\alpha$ a given constant and $\psi \rightarrow \psi e^{-i \alpha t}$, which immediately implies that the density $\rho = |\psi|^2$ is unchanged. Moreover, the NLSE \cref{NLSE} conserves the mass 
\begin{equation}
	M(\psi(\cdot, t)) = \int_{\Omega} |\psi(\vx, t)|^2 \rmd \vx \equiv M(\psi_0), \quad t \geq 0,
\end{equation}
and the energy
\begin{align}
	E(\psi(\cdot, t)) = \int_{\Omega} \left[ |\nabla \psi(\vx, t)|^2 + V(\vx)|\psi(\vx, t)|^2 + F(|\psi(\vx, t)|^2) \right] \rmd \vx \equiv E(\psi_0), 
\end{align}
where the interaction energy density $F(\rho) = \frac{\beta}{\sigma+1} \rho^{\sigma+1}$.

When the nonlinearity is chosen as the cubic nonlinearity (i.e., $\sigma = 1$ in \cref{eq:f}) and the potential is chosen as the harmonic trapping potential (namely, $ V(\vx) = |\vx|^2/2 $), the NLSE \cref{NLSE} reduces to the cubic NLSE with smooth potential, which is also known as the Gross-Pitaevskii equation (GPE), especially in the context of BEC. In this case, both $V$ and $f$ are smooth functions.
While the NLSE with smooth potential and nonlinearity, such as the GPE, is prevalent, diverse physics applications require the incorporation of low regularity potential and/or nonlinearity, including discontinuous potential, disorder potential, and non-integer power nonlinearity (see \cite{bao2023_semi_smooth, bao2023_EWI,henning2017,zhao2021} and references therein for the applications). 

For the cubic NLSE with sufficiently smooth initial data, many accurate and efficient numerical methods have been proposed and analyzed in the last two decades, including the finite difference time domain (FDTD) method \cite{FD,bao2013,review_2013,Ant,henning2017}, the exponential wave integrator (EWI) \cite{bao2014,ExpInt,SymEWI,bao2023_EWI}, and the time-splitting method \cite{bao2003JCP,BBD,lubich2008,review_2013,schratz2016,Ant,splitting_low_reg,RCO_SE,su2022,bao2023_semi_smooth}. Among these methods, the Strang time-splitting method is widely used due to its efficient implementation and the preservation of many dynamical properties including the time symmetry, time transverse invariance, dispersion relation and mass conservation at the discrete level \cite{Ant}. Recently, some low regularity integrators (LRI) have also been designed for the cubic NLSE with low regularity initial data and with/without potential \cite{LRI,LRI_sinum,LRI_error,bronsard2022,tree1,tree2,tree3}. 

Most of the aforementioned numerical methods can be extended straightforwardly to solve the NLSE \cref{NLSE} with low regularity potential and/or nonlinearity, e.g., the FDTD method \cite{henning2017}, the time-splitting method \cite{ignat2011,choi2021,bao2023_semi_smooth,zhao2021,bao2023_improved}, the EWI \cite{bao2023_EWI} and the LRI \cite{zhao2021,tree2,bronsard2022} (different from some singular nonlinearities \cite{sinum2019,bao2019,bao2022,bao2022singular,wang2023}, where regularization may be needed). However, their performances deviate considerably when compared to the smooth setting. As demonstrated in \cite{bao2023_EWI}, compared with FDTD methods, time-splitting methods and LRIs, in the presence of low regularity potential and/or nonlinearity, the EWI outperforms all of them (at least at semi-discrete level). Nevertheless, the EWI presented in \cite{bao2023_EWI} remains a first-order method and fails to preserve either the time symmetry or the conservation of mass and energy. It is of great interest to devise numerical schemes of high order that are capable of preserving the dynamical properties at discrete level. Based on the discussion above, we focus on designing and analyzing structure-preserving EWIs, especially time symmetric EWIs.  

In fact, extensive efforts have been made in the literature to design symmetric exponential integrators for the cubic NLSE, motivated by the structure-preserving properties and favorable long-time behavior of these symmetric schemes \cite{SymEWI,feng_dirac,Haier,bronsard2023,feng2023longtime,tree3}. However, most of them are fully implicit, which necessitates solving a fully nonlinear system at each time step. Usually, the nonlinear system is solved by a fixed point iteration, which is time-consuming, especially in two dimensions (2D) and three dimensions (3D) \cite{SymEWI,bronsard2023,feng2023longtime}. Actually, if the nonlinear system is not solved up to sufficient accuracy (e.g., machine accuracy), the symmetric property of the scheme may be destroyed. Moreover, although there are some explicit symmetric schemes in the literature \cite{feng_dirac}, they are only conditionally stable under certain CFL-type time step size restrictions. Additionally, all those schemes lack error estimates under low regularity assumptions on potential and nonlinearity. 

In this work, we introduce a second-order explicit and symmetric Gautschi-type EWI (sEWI), which possesses two main advantages: (i) it is explicit and symmetric while, remains stable under a time step size restriction independent of the mesh size; (ii) it is rigorously proved to be advantageous for the NLSE with low regularity potential and nonlinearity. Note that, despite sharing a similar spirit in the construction, the sEWI is a completely new method and it differs from the first-order EWI introduced in \cite{bao2023_EWI} in that it is a symmetric two-step method and is of second order. In fact, any symmetric method is at least second order \cite{Haier}. 



To show the merit of our new sEWI for the NLSE with low regularity potential and nonlinearity, we rigorously establish error estimates under varying regularity assumptions on potential and nonlinearity. The analysis is carried out in the one dimensional setting; however, it remains essentially the same in 2D and 3D. Main results are stated in Sections \ref{sec:main_1} and \ref{sec:main_2}, and we also summarize them here:
\begin{enumerate}[(i)]
	\item under the assumption of $H^2$-potential, $\sigma \geq 1$ and $H^4$-solution, we derive an optimal second-order error bound in $L^2$-norm at both semi-discrete (\cref{thm:second_order_semi}) and fully discrete (\cref{thm:second_order_full}) level; 
	\item under more general assumptions of $L^\infty$-potential, $\sigma > 0$ and $H^2$-solution, we establish a first-order error bound in $L^2$-norm along with a uniform $H^2$-bound of the numerical solution at both semi-discrete (\cref{thm:first_order_semi}) and fully discrete (\cref{thm:first_order_full}) level; 
	\item for the full-discretization scheme, the assumption on the nonlinearity in (i) can be relaxed to $\sigma \geq 1/2$ for certain non-resonant time steps (\cref{thm:second_order_improved}). 
\end{enumerate}
For first-order $L^2$-norm error bound in time, (ii) mirrors the first-order EWI in \cite{bao2023_EWI}, which, to our best knowledge, requires the weakest regularity of both potential and nonlinearity, among FDTD method \cite{henning2017}, Lie-Trotter time-splitting method \cite{bao2023_semi_smooth,bao2023_improved,ignat2011,choi2021}, and LRIs \cite{bronsard2022,tree2}. In terms of the optimal second-order error bound, the assumption of the sEWI in (i) is significantly weaker than the requirement of the Strang time-splitting method which requires $H^4$-potential and $\sigma \geq 3/2$ (or $\sigma = 1$) \cite{BBD,lubich2008,review_2013}. 
Although recent results in \cite{bao2023_improved} show that, for the full-discretization of the Strang time-splitting method, it can obtain optimal $L^2$-norm error bound under the same assumption as the sEWI in (i), a CFL-type time step size restriction is necessary. Actually, if imposing the same time step size restriction, the regularity requirement on nonlinearity for the sEWI can be further relaxed as in (iii). It should be noted the concept of ``low regularity" includes two aspects: one is, given low regularity potential and nonlinearity, determining the highest convergence order that can be achieved (corresponding to (ii)); and the other is identifying the lowest regularity required to obtain a given convergence order (corresponding to (i) and (iii)).



The rest of the paper is organized as follows. In Section 2, we present the sEWI and its spatial discretization by the Fourier spectral method. Sections 3-4 are devoted to the error estimates of the sEWI at semi-discrete level and fully discrete level, respectively. Numerical results are reported in Section 5 to confirm our error estimates. Finally, some conclusions are drawn in Section 6. Throughout the paper, we adopt standard notations of vector-valued Sobolev spaces as well as their corresponding norms. The bold letters are always used for vector-valued functions or operators. We denote by $ C $ a generic positive constant independent of the mesh size $ h $ and time step size $ \tau $, and by $ C(\alpha) $ a generic positive constant depending only on the parameter $ \alpha $. The notation $ A \lesssim B $ is used to represent that there exists a generic constant $ C>0 $, such that $ |A| \leq CB $.

\section{An explicit and symmetric exponential wave integrator}
In this section, we introduce an explicit and  symmetric exponential wave integrator (sEWI) and its spatial discretization to solve the NLSE \eqref{NLSE}. For simplicity of the presentation and to avoid heavy notations, we only carry out the analysis in one dimension (1D) and take $ \Omega = (a, b) $. Generalizations to 2D and 3D are straightforward.
In fact, main dimension sensitive estimates are the Sobolev embeddings. Throughout this paper, we only use embeddings $H^1(\Omega) \hookrightarrow L^4(\Omega)$,  $H^\frac{7}{4}(\Omega) \hookrightarrow L^\infty(\Omega)$ and $H^2(\Omega) \hookrightarrow L^\infty(\Omega)$, which are valid in 1D, 2D and 3D.
We define the periodic Sobolev spaces as (see, e.g., \cite{bronsard2022}, for the definition in the phase space)
\begin{equation*}
	H^m_\text{per}(\Omega):=\{\phi \in H^m(\Omega): \phi^{(k)}(a) = \phi^{(k)}(b), \quad k = 0 , \cdots, m-1\}, \quad m \in \Z^+. 
\end{equation*}
\subsection{A semi-discretization in time}
For simplicity, define an operator $B$ as 
\begin{equation}\label{eq:B_def}
	B(v)(x) := V(x)v(x) + f(|v(x)|^2)v(x), \quad v \in L^2(\Omega). 
\end{equation}
Choose a time step size $ \tau > 0 $ and denote time steps as $ t_n = n\tau $ for $ n = 0, 1, \cdots $. In the following, we shall always abbreviate $\psi(\cdot, t)$ by $\psi(t)$ for simplicity when there is no confusion. By Duhamel's formula, the exact solution $\psi$ satisfies
\begin{equation}\label{eq:duhamel_exact}
	\psi(t_n + \newt) = e^{i\newt\Delta} \psi(t_n) - i \int_0^\newt e^{i(\newt -s )\Delta} B(\psi(t_n + s)) \rmd s, \quad \newt \in \R. 
\end{equation}
Multiplying $e^{-i\newt\Delta}$ on both sides of \cref{eq:duhamel_exact}, we obtain
\begin{equation}\label{eq:duhamel_modified}
	e^{-i\newt\Delta} \psi(t_n + \newt) = \psi(t_n) - i \int_0^\newt e^{-is \Delta} B(\psi(t_n + s)) \rmd s. 
\end{equation}
Taking $\newt = \tau$ in \cref{eq:duhamel_modified}, we get
\begin{equation}\label{eq:exact_plus}
	e^{-i\tau\Delta} \psi(t_{n+1}) = \psi(t_n) - i \int_0^\tau e^{-is \Delta} B(\psi(t_n + s)) \rmd s. 
\end{equation}
Taking $\newt = -\tau$ in \cref{eq:duhamel_modified}, we get
\begin{align}\label{eq:exact_minus}
	e^{i\tau\Delta} \psi(t_{n-1}) &= \psi(t_n) - i \int_0^{-\tau} e^{-is \Delta} B(\psi(t_n + s)) \rmd s \notag \\
	&= \psi(t_n) + i \int_{-\tau}^0 e^{-is \Delta} B(\psi(t_n + s)) \rmd s. 
\end{align}
Subtracting \cref{eq:exact_minus} from \cref{eq:exact_plus}, we obtain
\begin{equation}\label{eq:exact_twisted}
	e^{-i \tau \Delta} \psi(t_{n+1}) - e^{i\tau\Delta} \psi(t_{n-1}) = -i \int_{-\tau}^{\tau} e^{-is\Delta} B(\psi(t_n + s)) \rmd s. 
\end{equation}
Multiplying $e^{i\tau\Delta}$ on both sides of \cref{eq:exact_twisted} yields
\begin{equation}\label{eq:exact}
	\psi(t_{n+1}) = e^{2i\tau\Delta} \psi(t_{n-1}) -i \int_{-\tau}^{\tau} e^{i(\tau - s)\Delta} B(\psi(t_n + s)) \rmd s. 
\end{equation}
Applying the Gautschi-type rule to approximate the integral in \cref{eq:exact}, i.e., approximating $\psi(t_n + s)$ by $\psi(t_n)$ and integrating out $e^{i(\tau - s)\Delta}$ exactly, we get
\begin{align}
	\int_{-\tau}^{\tau} e^{i(\tau - s)\Delta} B(\psi(t_n + s)) \rmd s 
	&\approx \int_{-\tau}^{\tau} e^{i(\tau - s)\Delta} B(\psi(t_n)) \rmd s \notag \\
	&= 2 \tau e^{i\tau\Delta} \vphis(\tau \Delta) B(\psi(t_n)),  \label{eq:approximation}
\end{align}
where $\vphis:\R \rightarrow \R$ is an analytic and even function defined as
\begin{equation}\label{eq:vphis_def}
	\vphis(\theta) = \sinc(\theta) = \frac{\sin(\theta)}{\theta}, \qquad \theta \in \R. 
\end{equation}
Plugging \cref{eq:approximation} into \cref{eq:exact} yields
\begin{equation}\label{eq:approx_ngeq1}
	\psi(t_{n+1}) \approx e^{2i\tau\Delta} \psi(t_{n-1}) -2i \tau e^{i \tau \Delta} \vphis(\tau \Delta) B(\psi(t_n)). 
\end{equation}
For the first step, we apply a first-order approximation as 
\begin{align}\label{eq:approx_neq0}
	\psi(t_1) 
	&= e^{i\tau\Delta}\psi(t_0) - i \tau \int_0^\tau e^{i(\tau - s)\Delta} B(\psi(t_0 + s)) \rmd s \notag \\
	&\approx e^{i\tau\Delta} \psi(t_0) - i \int_0^\tau e^{i(\tau - s)\Delta} B(\psi(t_0 )) \rmd s = e^{i\tau\Delta} \psi(t_0) - i \tau \vphi_1(i \tau \Delta) B(\psi(t_0)), 
\end{align}
where $ \vphi_1:\C \rightarrow \C $ is an entire function defined as
\begin{equation}
	\vphi_1(z) = \frac{e^z - 1}{z}, \qquad z \in \C. 
\end{equation}
The operators $\vphi_1(i \tau \Delta)$ and $\vphi_s(\tau \Delta)$ are defined through their action in the Fourier space, and they satisfy $ \vphi_1(i \tau \Delta) v, \vphi_s(\tau \Delta) v \in H^2_\text{per}(\Omega) $ for all $ v \in L^2(\Omega)$ (see (2.3) and (2.5) in \cite{bao2023_EWI}). 

Let $ \psin{n}(\cdot) $ be the approximation of $ \psi(\cdot, t_n) $ for $ n \geq 0 $. Applying the approximations \cref{eq:approx_ngeq1,eq:approx_neq0}, we obtain the sEWI as
\begin{equation}\label{eq:sEWI_scheme}
	\begin{aligned}
		\psin{n+1} &= e^{2i\tau \Delta} \psin{n-1} - 2i\tau e^{i\tau\Delta}\vphis(\tau \Delta) B(\psin{n}), \quad n \geq 1, \\
		\psin{1} &= e^{i\tau\Delta}\psin{0} - i\tau \vphi_1(i\tau\Delta) B(\psin{0}), \\
		\psin{0} &= \psi_0. 
	\end{aligned}
\end{equation}
One can check that the scheme is unchanged for $n \geq 1$ when exchanging $n-1$ and $n+1$ and replacing $\tau$ with $-\tau$, which implies that the scheme \cref{eq:sEWI_scheme} is symmetric in time.  	

\begin{remark}\label{rem:first_step}
	For the computation of the first step $\psin{1}$, instead of the first-order EWI we present here, one may choose other methods such as the Strang time-splitting method to guarantee time symmetry in the first step, which would be beneficial in the smooth setting. However, according to the analysis in \cite{bao2023_semi_smooth,bao2023_EWI,bao2023_improved}, time-splitting methods cannot give accurate approximation in the presence of low regularity potential and/or nonlinearity, and it will totally destroy the regularity of the numerical solution even for one step. Hence, if one wants a better approximation of the first step under the low regularity setting, we suggest computing $\psin{1}$ ``accurately" by using the first-order EWI a few times with a small time step. 
\end{remark}

\subsection{A full-discretization by using the Fourier spectral method in space}
In this subsection, we further discretize the sEWI \cref{eq:sEWI_scheme} in space by the Fourier spectral method to obtain a fully discrete scheme. Usually, the Fourier pseudospectral method is used for spatial discretization, which can be efficiently implemented with FFT. However, due to the low regularity of potential and/or nonlinearity, it is very hard to establish error estimates of the Fourier pseudospectral method, and it is impossible to obtain optimal error bounds in space as order reduction can be observed numerically \cite{bao2023_EWI,henning2022}. 

Choose a mesh size $ h=(b-a)/N $ with $ N $ being a positive even integer. Define the index set
\begin{equation*}
	\mathcal{T}_N = \left\{-\frac{N}{2}, \cdots, \frac{N}{2}-1 \right\}, 
\end{equation*}
and denote
\begin{equation}
	\begin{aligned}
		&X_N = \text{span}\left\{e^{i \mu_l(x - a)}: l \in \mathcal{T}_N\right\}, \quad \mu_l = \frac{2 \pi l}{b-a}. 
	\end{aligned}
\end{equation}
Let $ P_N:L^2(\Omega) \rightarrow X_N $ be the standard $ L^2 $-projection onto $ X_N $ as
\begin{equation}
	(P_N u)(x) = \sum_{l \in \mathcal{T}_N} \widehat u_l e^{i \mu_l(x - a)}, \qquad x \in \overline{\Omega} = [a, b],
\end{equation}
where $ u \in L^2(\Omega) $ and $\widehat u_l$ are the Fourier coefficients of $u$ defined as
\begin{equation}\label{eq:ft}
	\widehat{u}_l = \frac{1}{b-a} \int_a^b u(x) e^{-i \mu_l (x-a)} \rmd x,  \qquad l \in \Z.
\end{equation}

Let $ \psihn{n}(\cdot) $ be the numerical approximations of $ \psi(\cdot, t_n) $ for $ n \geq 0 $. Then the sEWI Fourier spectral method (sEWI-FS) reads 
\begin{equation}\label{eq:sEWI-FS_scheme}
	\begin{aligned}
		\widehat{(\psihn{n+1})}_l &= e^{- 2 i \tau \mu_l^2} \widehat{(\psihn{n-1})}_l - 2i\tau e^{- i \tau \mu_l^2} \vphi_s(\tau \mu_l^2) \widehat{(B(\psihn{n}))}_l, \quad l \in \mathcal{T}_N, \quad n \geq 1, \\
		\widehat{(\psihn{1})}_l &= e^{- i \tau \mu_l^2}\widehat{(\psihn{0})}_l - i\tau \vphi_1(-i\tau\mu_l^2) \widehat{(B(\psihn{0}))}_l, \quad l \in \mathcal{T}_N, \\
		\widehat{(\psihn{0})}_l &= \widehat{(\psi_0)}_l, \quad l \in \mathcal{T}_N.  
	\end{aligned}
\end{equation}	
Then the numerical solution $\psihn{n} \  (n \geq 0) \in X_N$ obtained from \cref{eq:sEWI-FS_scheme} satisfies
\begin{equation}\label{eq:sEWI-FS}
	\begin{aligned}
		\psihn{n+1} &= e^{2i \tau \Delta} \psihn{n-1} - 2i\tau e^{i \tau \Delta} \vphi_s(\tau \Delta) P_N B(\psihn{n}), \quad n \geq 1, \\
		\psihn{1} &= e^{i \tau \Delta}\psihn{0} - i\tau \vphi_1(i\tau\Delta) P_N B(\psihn{0}), \\
		\psihn{0} &= P_N \psi_0. 
	\end{aligned}
\end{equation}
One may check again that when exchanging $n+1$ and $n-1$ and replacing $\tau$ with $-\tau$ in the first equation of \cref{eq:sEWI-FS}, the equation remains unchanged, which implies that the fully discrete scheme is still time symmetric. Note that the Fourier spectral discretization of the Strang time-splitting method (which is considered in \cite{bao2023_improved}) is no longer symmetric in time. 

By performing the standard Von Neumann analysis, we obtain the following linear stability of the sEWI-FS method: Assuming that $V(x) \equiv V_0 \in \R$ and $f(\rho) \equiv f_0 \in \R $, then the sEWI-FS method \cref{eq:sEWI-FS_scheme} is stable when $\tau |V_0 + f_0| \leq 1$. In particular, this stability condition suggests that one shall chose the time step size $\tau$ satisfying $\tau \lesssim 1/(\sup_{x \in \Omega} |V(x)| ) $ in practice. 

\section{Error estimates of the semi-discretization \cref{eq:sEWI_scheme}} \label{sec:3}
In this section, we shall prove error estimates for the semi-discrete scheme \cref{eq:sEWI_scheme} under different regularity assumptions on the potential, nonlinearity and exact solution as shown below. 

Let $T_\text{max}$ be the maximal existing time of the solution to \cref{NLSE} and take $0<T<T_\text{max}$ be some fixed time. We shall work under the following three sets of assumptions: 

(A) Assumptions for ``good" potential and nonlinearity
	\begin{equation}\label{eq:A2}
		\begin{aligned}
			&V \in H^2_\text{per}(\Omega), \quad  \sigma \geq 1, \\
			&\psi \in C([0, T]; H^4_\text{per}(\Omega)) \cap C^1([0, T]; H^2(\Omega)) \cap C^2([0, T]; L^2(\Omega)). 
		\end{aligned}
	\end{equation}

(B) Assumptions for low regularity potential and/or nonlinearity
	\begin{equation}\label{eq:A1}
		V \in L^\infty(\Omega), \quad \sigma > 0, \quad \psi \in C([0, T]; H^2_\text{per}(\Omega)) \cap C^1([0, T]; L^2(\Omega)). 
	\end{equation}

(C) Assumptions with relaxed requirement on nonlinearity compared to \cref{eq:A2}
	\begin{equation}\label{eq:A3}
		\begin{aligned}
			&V \in H^2_\text{per}(\Omega), \quad  \sigma \geq 1/2, \\
			&\psi \in C([0, T]; H^4_\text{per}(\Omega)) \cap C^1([0, T]; H^2(\Omega)) \cap C^2([0, T]; L^2(\Omega)). 
		\end{aligned}
	\end{equation}

\subsection{Main results}\label{sec:main_1}
In this subsection, we present our main error estimates for the semi-discrete scheme under assumptions \cref{eq:A2,eq:A1}, respectively. Since optimal error bounds under \cref{eq:A3} can only be proved at the fully discrete level with non-resonant time steps, the corresponding result is postponed to Section 4. 

Let $\psin{n} ( n \geq 0)$ be obtained from the semi-discretization \cref{eq:sEWI_scheme}. For sufficiently smooth potential and nonlinearity as well as the exact solution, we have the following optimal second-order $L^2$-norm error bound.
\begin{theorem}[Optimal error bounds for ``good" potential and nonlinearity]\label{thm:second_order_semi}
	Under the assumptions \cref{eq:A2}, there exists $\tau_0>0$ sufficiently small such that when $0 < \tau < \tau_0$, we have
	\begin{equation}
		\| \psi(\cdot, t_n) - \psin{n} \|_{L^2} \lesssim \tau^2, \quad \| \psi(\cdot, t_n) - \psin{n} \|_{H^1} \lesssim \tau^\frac{3}{2}, \quad 0 \leq  n \leq T/\tau. 
	\end{equation}
\end{theorem}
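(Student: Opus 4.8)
The plan is to prove the error bound by the standard ``consistency + stability'' route for a two-step exponential integrator, but with care taken to handle the first step separately (since it is only first-order accurate) and to exploit the symmetry of the scheme so that the local truncation error at steps $n\ge1$ is $O(\tau^3)$ rather than $O(\tau^2)$. First I would write the error as $\en{n} := \psi(\cdot,t_n) - \psin{n}$ and subtract the scheme \cref{eq:sEWI_scheme} from the exact identity \cref{eq:exact}, obtaining for $n\ge1$
\begin{equation*}
	\en{n+1} = e^{2i\tau\Delta}\en{n-1} - 2i\tau e^{i\tau\Delta}\vphis(\tau\Delta)\bigl(B(\psi(t_n)) - B(\psin{n})\bigr) + \mcalL{n},
\end{equation*}
where $\mcalL{n}$ is the local truncation error coming from the Gautschi quadrature \cref{eq:approximation}. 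The key consistency estimate is that, because the quadrature node is the midpoint of the symmetric interval $[-\tau,\tau]$ and $\vphis$ is even, the first-order-in-$s$ term in the Taylor expansion of $s\mapsto e^{-is\Delta}B(\psi(t_n+s))$ integrates to zero, leaving $\|\mcalL{n}\|_{L^2}\lesssim \tau^3$ provided $\partial_{ss}\bigl(e^{-is\Delta}B(\psi(t_n+s))\bigr)$ is bounded in $L^2$; this is where the assumptions $V\in H^2_\text{per}$, $\sigma\ge1$ and $\psi\in C^2([0,T];L^2)\cap C^1([0,T];H^2)\cap C([0,T];H^4)$ are used — they guarantee $B(\psi(t))\in H^2$ uniformly (so $\Delta$ can hit it) and that $t\mapsto B(\psi(t))$ is $C^2$ into $L^2$, using that $f(\rho)=\beta\rho^\sigma$ with $\sigma\ge1$ is $C^1$ with locally Lipschitz derivative and the algebra/Moser estimates in $H^1$ and $H^2$. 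For the first step I would separately show $\|\en{1}\|_{L^2}\lesssim\tau^2$ directly from \cref{eq:approx_neq0}, which needs only $B(\psi(t))\in C^1([0,T];L^2)$ — again covered by (A).

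Next I would set up the stability/recursion argument. Introduce the vector $\En{n} = (\en{n+1}, \en{n})^T$ and the matrix propagator $\Aop{\tau} = \begin{pmatrix} 0 & e^{2i\tau\Delta} \\ I & 0\end{pmatrix}$ acting on $H^1\times H^1$ (or $L^2\times L^2$), which is an isometry on $L^2\times L^2$ and on $H^1\times H^1$ since $e^{2i\tau\Delta}$ is unitary on every $H^s$. The nonlinear term $B(\psi(t_n))-B(\psin{n})$ must be controlled by $\|\en{n}\|$ times a constant depending on uniform bounds for $\psi$ and $\psin{n}$ in $L^\infty$ (hence in $H^1$ in 1D via $H^1\hookrightarrow L^\infty$, or rather $H^{7/4}\hookrightarrow L^\infty$ as flagged in the paper); this is a local Lipschitz estimate for $B$ on bounded sets of $H^1$, using that $\rho\mapsto f(\rho)\rho = \beta\rho^{\sigma+1}$ has a locally Lipschitz derivative when $\sigma\ge1$. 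The operator $\vphis(\tau\Delta)$ is bounded by $1$ on every $H^s$ (since $|\sinc|\le1$), so it costs nothing. This gives a Gronwall-type recursion $\|\En{n+1}\|_{L^2}\le\|\En{n}\|_{L^2} + C\tau\|\En{n}\|_{L^2} + \|\mcalL{n}\|_{L^2}$, which, combined with the local error bounds and the first-step bound, yields the discrete Gronwall conclusion $\|\en{n}\|_{L^2}\lesssim \tau^2$ on $0\le n\le T/\tau$ — but only after resolving a bootstrapping issue, discussed below.

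The $H^1$ bound at the reduced rate $\tau^{3/2}$ I would obtain by interpolation: one first establishes an $H^2$ (or at least $H^1$) \emph{a priori} bound on $\psin{n}$ that is $O(1)$ uniformly in $n$ and $\tau$ — this follows from the $H^2$-regularity machinery: the scheme preserves $H^2$ because $e^{i\tau\Delta}$, $\vphis(\tau\Delta)$, $\vphi_1(i\tau\Delta)$ all map $L^2\to H^2_\text{per}$ with the needed bounds, and $B$ maps $H^2\to H^2$ with at most polynomial growth when $V\in H^2$ and $\sigma\ge1$. Then $\|\en{n}\|_{H^1}\lesssim \|\en{n}\|_{L^2}^{1/2}\|\en{n}\|_{H^2}^{1/2}\lesssim (\tau^2)^{1/2}\cdot 1 = \tau$ would only give $\tau$, so instead I expect the sharp $\tau^{3/2}$ comes from running the whole recursion directly in $H^1$: there the local error is still $O(\tau^3)$ in $L^2$ but only something like $O(\tau^{5/2})$ in $H^1$ — more precisely, one splits $\mcalL{n}$ and estimates the ``bad'' part, which involves $\Delta$ applied to quantities only controlled in $H^2$, losing half a derivative, giving $\|\mcalL{n}\|_{H^1}\lesssim\tau^{5/2}$ and hence $\|\en{n}\|_{H^1}\lesssim\tau^{3/2}$ after summing $T/\tau$ terms.

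The main obstacle is the \emph{bootstrap for the $L^\infty$/$H^1$ bound on the numerical solution}: the Lipschitz constant for $B$ in the recursion depends on $\|\psin{n}\|_{L^\infty}$, which is only known to be bounded \emph{a posteriori} once the error is small. The standard fix — which I would carry out — is an induction on $n$: assume $\|\en{m}\|_{H^1}$ (equivalently $\|\en{m}\|_{L^2}$ plus a uniform $H^2$ bound, then interpolate) is $\le C\tau^{3/2}$ for all $m\le n$, deduce $\|\psin{m}\|_{L^\infty}$ is bounded by $\|\psi\|_{L^\infty([0,T];H^{7/4})}+1$, use this to run one more step of the recursion with a \emph{fixed} Lipschitz constant, and close the induction for $\tau<\tau_0$ small enough that the accumulated constant $e^{CT}$ times the consistency error stays below the induction threshold. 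Handling the $H^2$ a priori bound of $\psin{n}$ simultaneously (needed both for the $H^1$ interpolation and for controlling the $\Delta$ in $\mcalL{n}$) inside the same induction, while keeping all constants independent of $\tau$, is the delicate bookkeeping part; everything else is routine Taylor expansion, the unitarity of $e^{i\tau\Delta}$, the bound $\|\vphis(\tau\Delta)\|\le1$, and discrete Gronwall.
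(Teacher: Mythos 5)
Your overall architecture (an $O(\tau^{3})$ local error for $n\ge 1$, an $O(\tau^{2})$ first step, an isometry-plus-Lipschitz stability recursion, and discrete Gronwall) matches the paper's, and your observation that the $H^1$ rate $\tau^{3/2}$ comes from a local error of size $\tau^{5/2}$ in $H^1$ is exactly right. The bootstrap you worry about at the end is also handled more cleanly in the paper: the uniform $H^2$-bound $\|\psin{n}\|_{H^2}\lesssim 1$ is already available from \cref{thm:first_order_semi}, whose hypotheses \cref{eq:A1} are implied by \cref{eq:A2}, so no fresh induction is needed in the second-order proof.

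The genuine gap is in your consistency estimate. You propose to Taylor-expand the \emph{whole} integrand $s\mapsto e^{-is\Delta}B(\psi(t_n+s))$ about $s=0$, use the symmetry of $[-\tau,\tau]$ to kill the linear term, and bound the remainder by $\sup_s\|\partial_{ss}(e^{-is\Delta}B(\psi(t_n+s)))\|_{L^2}$. But $\partial_{ss}$ applied to $e^{-is\Delta}$ produces $-\Delta^{2}B(\psi(t_n+s))$, so this requires $B(\psi)\in H^4$ uniformly in time; under \cref{eq:A2} one only has $V\in H^2_{\rm per}$, hence $B(\psi)=V\psi+f(|\psi|^2)\psi\in H^2$ and no better, and your own justification (``$B(\psi(t))\in H^2$ so $\Delta$ can hit it'') supplies only one Laplacian where your remainder needs two. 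Requiring $\Delta^2 B(\psi)\in L^2$ would essentially force $V\in H^4$, i.e., the same regularity as the Strang splitting analysis, defeating the point of the theorem. The paper's proof of \cref{prop:local_error_second_semi} avoids this by expanding only $B(\psi(t_n+s))=B(\psi(t_n))+s\,dB(\psi(t_n))[\partial_t\psi(t_n)]+O(s^2)$ (the $O(s^2)$ remainder needs only \cref{lem:diff_dB}, no derivatives in $x$), and then computing the surviving linear term exactly: $-i\int_{-\tau}^{\tau}se^{-is\Delta}\,\rmd s=2\tau^{3}\Delta\vphic(\tau\Delta)$, so the would-be $O(\tau^{2})$ term is in fact $\tau^{3}$ times a \emph{single} Laplacian (composed with the bounded operator $\vphic(\tau\Delta)$) acting on $dB(\psi(t_n))[\partial_t\psi(t_n)]$, which lies in $H^2$ precisely under $V\in H^2$, $\sigma\ge 1$, $\psi\in H^4$ (\cref{lem:dB_H2}). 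In other words, the symmetric interval does not make the linear term vanish once the $s$-dependent propagator is kept; it upgrades it from $O(\tau^{2})$ to $O(\tau^{3}\Delta)$, and the regularity bookkeeping of the theorem hinges on paying only one derivative there. You would need to replace your remainder estimate by this exact-integration step for the proof to close under \cref{eq:A2}.
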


For more general low regularity potential and/or nonlinearity, which inevitably results in low regularity of the exact solution, we have the following first-order $L^2$-norm error bound. 
\begin{theorem}[Error bounds for low regularity potential and/or nonlinearity]\label{thm:first_order_semi}
	Under the assumptions \cref{eq:A1}, there exists $\tau_0>0$ sufficiently small such that when $0 < \tau < \tau_0$, we have
	\begin{equation}
			\| \psi(\cdot, t_n) - \psin{n}  \|_{L^2} \lesssim \tau, \ \| \psi(\cdot, t_n) - \psin{n}  \|_{H^1} \lesssim \tau^\frac{1}{2},\ \| \psin{n} \|_{H^2} \lesssim 1, 
			 \quad 0 \leq n \leq T/\tau.
	\end{equation}
\end{theorem}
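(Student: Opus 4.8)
The plan is to run an induction on $n$ that simultaneously controls the error $\en{n} := \psi(t_n) - \psin{n}$ in $L^2$ and keeps a uniform $H^2$ bound on the numerical solution $\psin{n}$. The need for both is structural: under assumptions \cref{eq:A1} the nonlinearity $B$ is only Lipschitz on bounded sets of $H^2$ (via the embeddings $H^2 \hookrightarrow L^\infty$ and $H^\frac74 \hookrightarrow L^\infty$ quoted in the paper), so to estimate $\|B(\psi(t_n)) - B(\psin{n})\|_{L^2}$ we must know a priori that $\psin{n}$ stays in a fixed $H^2$-ball; conversely, to propagate that $H^2$-ball we need the error to already be small. So I would set up the induction hypothesis as: $\|\en{k}\|_{L^2} \le C_1 \tau$ and $\|\psin{k}\|_{H^2} \le M$ for all $k \le n$, with $M$ chosen larger than $\sup_{[0,T]}\|\psi(t)\|_{H^2}$ (finite by \cref{eq:A1}), and $C_1$, $\tau_0$ to be fixed at the end.

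First I would establish the local truncation error. Comparing the exact identity \cref{eq:exact} with the scheme \cref{eq:sEWI_scheme}, for $n\ge 1$ the one-step defect is
\begin{equation*}
	\mcalE{n} := -i\int_{-\tau}^{\tau} e^{i(\tau-s)\Delta}\bigl[B(\psi(t_n+s)) - B(\psi(t_n))\bigr]\rmd s,
\end{equation*}
and since $e^{i(\tau-s)\Delta}$ is an $L^2$-isometry it suffices to bound $\|B(\psi(t_n+s))-B(\psi(t_n))\|_{L^2}$. Writing $B(\psi(t_n+s)) - B(\psi(t_n)) = \int_0^s \partial_t B(\psi(t_n+\theta))\,\rmd\theta$ and using that $\psi\in C^1([0,T];L^2)$ together with $\psi\in C([0,T];H^2_{\text{per}})$ (to control the potential term $V\partial_t\psi$ in $L^2$ via $V\in L^\infty$, and the nonlinear term $f'(|\psi|^2)(\cdots)\partial_t\psi$ via $|\psi|^{2\sigma}\in L^\infty$), each integrand is $O(1)$ in $L^2$, so $\|\mcalE{n}\|_{L^2}\lesssim \int_{-\tau}^{\tau}|s|\,\rmd s \lesssim \tau^2$, i.e. local error $O(\tau^2)$, which after $T/\tau$ steps of a two-step recursion yields the global $O(\tau)$ in $L^2$. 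The first step \cref{eq:approx_neq0} carries a one-step defect that is formally only $O(\tau^2)$ as well (a first-order method used once), so it does not spoil the global rate; I would note Remark \ref{rem:first_step} is irrelevant to the asymptotic bound here.

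Next, the error recursion. Subtracting \cref{eq:sEWI_scheme} from \cref{eq:exact} gives, for $n\ge 1$,
\begin{equation*}
	\en{n+1} = e^{2i\tau\Delta}\en{n-1} - 2i\tau e^{i\tau\Delta}\vphis(\tau\Delta)\bigl[B(\psi(t_n)) - B(\psin{n})\bigr] + \mcalE{n}.
\end{equation*}
Using that $e^{2i\tau\Delta}$ and $e^{i\tau\Delta}$ are $L^2$-isometries and that $\vphis(\tau\Delta)$ has operator norm $\le 1$ on $L^2$ (since $|\sinc|\le 1$), and the local Lipschitz bound $\|B(\psi(t_n))-B(\psin{n})\|_{L^2}\le L(M)\|\en{n}\|_{L^2}$ valid on the $H^2$-ball, I get $\|\en{n+1}\|_{L^2} \le \|\en{n-1}\|_{L^2} + C\tau\|\en{n}\|_{L^2} + C\tau^2$. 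A discrete Gronwall argument on the two-step recursion (summing over even/odd indices, or using the standard trick of bounding $\max(\|\en{n}\|,\|\en{n+1}\|)$) then yields $\|\en{n}\|_{L^2}\le C(T)\tau$ for $n\le T/\tau$, which closes the first part of the induction hypothesis with $C_1 := C(T)$.

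The main obstacle, and the only genuinely delicate point, is propagating the uniform $H^2$-bound $\|\psin{n}\|_{H^2}\le M$. Here I cannot differentiate the scheme and run a discrete energy estimate in $H^2$ in the naive way, because applying $\Delta$ to $B(\psin{n}) = V\psin{n} + f(|\psin{n}|^2)\psin{n}$ would need two derivatives on $V$, which we do not have. Instead I would exploit the smoothing/structure of the scheme: the operators $\vphi_1(i\tau\Delta)$ and $\vphis(\tau\Delta)$ map $L^2(\Omega)$ into $H^2_{\text{per}}(\Omega)$ with $\|\vphis(\tau\Delta)v\|_{H^2}\lesssim \tau^{-1}\|v\|_{L^2}$ and likewise for $\vphi_1$ — this is exactly the gain stated after \cref{eq:approx_neq0} (cf. (2.3), (2.5) in \cite{bao2023_EWI}). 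Thus from \cref{eq:sEWI_scheme},
\begin{equation*}
	\|\psin{n+1}\|_{H^2} \le \|\psin{n-1}\|_{H^2} + 2\tau\,\|\vphis(\tau\Delta)B(\psin{n})\|_{H^2} \le \|\psin{n-1}\|_{H^2} + C\,\|B(\psin{n})\|_{L^2},
\end{equation*}
and $\|B(\psin{n})\|_{L^2}\le \|V\|_{L^\infty}\|\psin{n}\|_{L^2} + |\beta|\,\|\psin{n}\|_{L^\infty}^{2\sigma}\|\psin{n}\|_{L^2} \lesssim P(\|\psin{n}\|_{H^2})$ via $H^2\hookrightarrow L^\infty$, with $\|\psin{n}\|_{L^2}\le \|\psi(t_n)\|_{L^2} + \|\en{n}\|_{L^2}$ already under control. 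This gives $\|\psin{n+1}\|_{H^2} \le \|\psin{n-1}\|_{H^2} + C\tau\,P(M)$ as long as the hypothesis holds at step $n$; iterating the two-step recursion over $\le T/\tau$ steps produces $\|\psin{n}\|_{H^2} \le \|\psin{0}\|_{H^2} + \|\psin{1}\|_{H^2} + C(T)P(M)$, which is $O(1)$ but does \emph{not} by itself beat $M$ — the bound grows with $T$ and with $P(M)$. The resolution is the usual bootstrap: since $\|\psin{n}-\psi(t_n)\|_{L^2}\le C_1\tau$ is small, interpolating with the a priori (possibly large) $H^2$-bound gives $\|\psin{n}-\psi(t_n)\|_{H^s}\to 0$ for $s<2$, in particular in $L^\infty$; feeding this back, $\|B(\psin{n})\|_{L^2}$ is actually close to $\|B(\psi(t_n))\|_{L^2}\le C$, so the increment is $C\tau$ with $C$ \emph{independent of $M$}, and then choosing $M := \sup_{[0,T]}\|\psi(t)\|_{H^2} + C(T) + 1$ and $\tau_0$ small enough closes the induction. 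The $H^1$ bound $\|\en{n}\|_{H^1}\lesssim \tau^{1/2}$ is then immediate by interpolating $\|\en{n}\|_{L^2}\lesssim\tau$ against $\|\en{n}\|_{H^2}\lesssim 1$.
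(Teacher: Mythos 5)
Your overall architecture (two-step $L^2$ recursion with an $O(\tau^2)$ local defect, discrete Gronwall, plus a simultaneously propagated bound on the numerical solution to keep the Lipschitz constants of $B$ uniform) matches the paper's, and your $L^2$ local-error and recursion estimates are correct. The genuine gap is in the propagation of the uniform $H^2$-bound, which you yourself identify as the delicate point. Your chain of inequalities reads $\| \psin{n+1} \|_{H^2} \le \| \psin{n-1} \|_{H^2} + 2\tau \| \vphis(\tau\Delta) B(\psin{n}) \|_{H^2} \le \| \psin{n-1} \|_{H^2} + C \| B(\psin{n}) \|_{L^2}$: the prefactor $\tau$ is exactly consumed by the $\tau^{-1}$ loss in the $H^2$-smoothing of $\vphis(\tau\Delta)$ (\cref{lem:smoothingofphis}), so the per-step increment is $O(1)$, not $O(\tau)$. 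The factor $\tau$ you insert in the very next line (``$\le \| \psin{n-1} \|_{H^2} + C\tau P(M)$'') is unjustified, and with an $O(1)$ increment the iteration only yields $\| \psin{n} \|_{H^2} \lesssim T/\tau$. The bootstrap you propose does not repair this: making the constant independent of $M$ does not manufacture the missing factor of $\tau$, and interpolating $\| \en{n} \|_{L^2} \lesssim \tau$ against an $H^2$-bound of size $O(1/\tau)$ gives $\| \en{n} \|_{H^{7/4}} \lesssim \tau^{1/8}\tau^{-7/8}$, which is not small — so you also lose the $L^\infty$ control needed for the Lipschitz constants in your $L^2$ recursion, and the $H^1$ rate $\tau^{1/2}$ (which you obtain by interpolation against $\| \en{n} \|_{H^2} \lesssim 1$) collapses as well.

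The missing idea — and the paper's actual route — is to estimate the \emph{error} in $H^2$ rather than the solution itself. The local defect $\mcalL{n} = -i\int_{-\tau}^{\tau} e^{i(\tau-s)\Delta} g(s)\,\rmd s$ with $g(s) = B(\psi(t_n+s)) - B(\psi(t_n))$ obeys the smoothing estimate \cref{eq:H2_2}, namely $\| \Delta \mcalL{n} \|_{L^2} \lesssim \| g \|_{L^\infty([-\tau,\tau];L^2)} + \tau \| \partial_t g \|_{L^\infty([-\tau,\tau];L^2)} \lesssim \tau$, because $g$ itself is $O(\tau)$ in $L^2$; this is the extra factor of $\tau$ your direct approach cannot see, since it would require estimating $B(\psin{n})$ alone rather than a difference. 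Combined with the conditional stability estimate \cref{prop:stability}, whose $\alpha=2$ increment is $C\| \En{n} \|_{L^2} \lesssim \tau$, the $H^2$ error accumulates to $O(1)$ over $T/\tau$ steps, whence $\| \psin{n} \|_{H^2} \le \| \psi(t_n) \|_{H^2} + \| \en{n} \|_{H^2} \lesssim 1$. The stability constants only require $L^\infty$ bounds on the numerical solution, which the paper secures by additionally propagating $\| \En{n} \|_{H^{7/4}} \lesssim \tau^{1/8}$ in the coupled induction (the per-step increment $C\tau^{1/8}\| \En{n} \|_{L^2} + C\tau^{9/8}$ sums to $O(\tau^{1/8})$) and invoking $H^{7/4}\hookrightarrow L^\infty$; the $H^1$ rate then follows by interpolation exactly as you intended.
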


\begin{remark}\label{rem:semi}
	Recall the results in \cite{BBD,lubich2008,review_2013}, to obtain the second-order $L^2$-norm error bound of the Strang time-splitting method at semi-discrete level, it is required that $V \in H^4_\text{per}(\Omega)$ and $\sigma \geq 3/2$ (or $\sigma = 1$), which is much stronger than the requirement in Theorem \ref{thm:second_order_semi} of the sEWI. Similarly, in terms of the first-order $L^2$-norm error bound at semi-discrete level, as is already discussed in \cite{bao2023_EWI}, the sEWI also needs significantly weaker regularity than the Lie-Trotter splitting method analyzed in \cite{bao2023_semi_smooth}. 
\end{remark}

\begin{remark}
	Regarding the assumption of \cref{thm:first_order_semi}, according to the results in \cite{cazenave2003,kato1987} (see also \cite{bao2023_semi_smooth,bao2023_EWI,bao2023_improved}), when $V \in L^\infty(\Omega)$ and $\sigma > 0$ ($\sigma<2$ if in 3D), it can be expected that the unique exact solution $\psi \in C([0, T]; H^2_\text{per}(\Omega)) \cap C^1([0, T]; L^2(\Omega))$ if $\psi_0 \in H^2_\text{per}(\Omega)$ in the NLSE \cref{NLSE}. The assumption of \cref{thm:second_order_semi} is compatible with this assumption in the sense that the increment of differentiability orders of potential and nonlinearity are the same as the exact solution. In other words, when $V \in H^2_\text{\rm per}(\Omega)$ and $\sigma \geq 1$, if $\psi_0 \in H^4_\text{per}(\Omega)$, one could expect $\psi \in C([0, T]; H_\text{\rm per}^4(\Omega)) \cap C^1([0, T]; H^2(\Omega)) \cap C^2([0, T]; L^2(\Omega)) $. 
\end{remark}

In the rest of this section, we shall prove \cref{thm:second_order_semi,thm:first_order_semi}. Note that the assumption of \cref{thm:first_order_semi} is more general than that of \cref{thm:second_order_semi}. Hence, we start with the proof of \cref{thm:first_order_semi}, and the uniform $H^2$-bound of the numerical solution established in \cref{thm:first_order_semi} will be used to simplify the proof of \cref{thm:second_order_semi}. 

\subsection{Proof of \cref{thm:first_order_semi} for low regularity potential and/or nonlinearity}
In this subsection, we will establish a first-order error bound for the sEWI under low regularity assumptions \cref{eq:A1} made in \cref{thm:first_order_semi}.

Under similar assumptions, a first-order error bound has recently been shown for the first-order EWI in \cite{bao2023_EWI}. Here, following a similar idea, we can prove the same first-order error bound for the sEWI. 

To follow the proof for the one-step method, we first rewrite the sEWI scheme \cref{eq:sEWI_scheme} in matrix form. We define $\phit^\tau:[H^2_\text{per}(\Omega)]^2 \rightarrow H^2_\text{per}(\Omega)$ as
\begin{equation}\label{eq:phit_def}
	\phit^\tau(\phi_1, \phi_0) = e^{2i \tau \Delta} \phi_0 - 2i \tau e^{i\tau\Delta} \vphis(\tau\Delta) B(\phi_1), \quad \phi_0, \phi_1 \in H^2_\text{per}(\Omega). 
\end{equation}
Recalling \cref{eq:sEWI_scheme}, one immediately has 
\begin{equation*}
	\psin{n+1} = \phit^\tau(\psin{n}, \psin{n-1}), \qquad n \geq 1. 
\end{equation*}
Introducing a semi-discrete numerical flow $\Phit^\tau: [H_\text{per}^2(\Omega)]^2 \rightarrow [H_\text{per}^2(\Omega)]^2 $ as
\begin{equation}\label{eq:numerical_flow_semi}
	\Phit^\tau 
	\begin{pmatrix}
		\phi_1 \\
		\phi_0
	\end{pmatrix} = 
	\Aop{\tau}
	\begin{pmatrix}
		\phi_1 \\
		\phi_0
	\end{pmatrix}
	+ \tau \Hop(\phi_1) = 
	\begin{pmatrix}
		\phit^\tau(\phi_1, \phi_0) \\
		\phi_1
	\end{pmatrix}, \quad \phi_0, \phi_1 \in H^2_\text{per}(\Omega), 
\end{equation}
where $\phit^\tau$ is defined in \cref{eq:phit_def} and 
\begin{equation}\label{eq:def_A_H}
	\Aop{t} := \begin{pmatrix}
		0 & e^{2it\Delta}\\
		I & 0
	\end{pmatrix}, \quad
	\Hop(\phi) := \begin{pmatrix}
		- 2i e^{i \tau \Delta} \vphis(\tau\Delta) B(\phi) \\
		0
	\end{pmatrix}, \quad \phi \in H^2_\text{per}(\Omega). 
\end{equation}
Define the semi-discrete solution vector $\Psin{n} (1 \leq n \leq T/\tau) \in [H^2_\text{per}(\Omega)]^2$ as
\begin{equation}
	\Psin{n}:=(\psin{n}, \psin{n-1})^T, \qquad 1 \leq n \leq T/\tau. 
\end{equation}
Then the sEWI scheme \cref{eq:sEWI_scheme} can be equivalently written in matrix form as
\begin{equation}\label{eq:sEWI_matrix}
	\begin{aligned}
		\Psin{n+1} 
		&= \Phit^\tau\left(\Psin{n}\right)
		=  
		\Aop{\tau}
		\begin{pmatrix}
			\psin{n} \\
			\psin{n-1}
		\end{pmatrix}
		+ \tau \Hop(\psin{n}), \quad n \geq 1 \\
		\Psin{1}
		&= 
		\begin{pmatrix}
			\psin{1} \\
			\psin{0}
		\end{pmatrix}
		=	
		\begin{pmatrix}
			e^{i\tau\Delta}\psi_0 - i \tau \vphi_1(i \tau \Delta) B(\psi_0) \\
			\psi_0
		\end{pmatrix}.  
	\end{aligned}
\end{equation}
For $ 1 \leq n \leq T/\tau $, we also define the exact solution vector $\Psitn{n}$ as
\begin{equation}\label{eq:Psitn_def}
	\Psitn{n}: = (\psi(t_n), \psi(t_{n-1}))^T. 
\end{equation}

We first present some estimates of the operator $B$ \cref{eq:B_def} from \cite{bao2023_improved}. 
\begin{lemma}\label{lem:diff_B}
	Under the assumptions $ V \in L^\infty(\Omega) $ and $ \sigma > 0 $, for any $ v, w \in L^\infty(\Omega) $ satisfying $ \| v \|_{L^\infty} \leq M $ and $ \| w \|_{L^\infty} \leq M $, we have
	\begin{equation*}
		\| B(v) - B(w) \|_{L^2} \leq C(\| V \|_{L^\infty}, M) \| v - w \|_{L^2}. 
	\end{equation*}
\end{lemma}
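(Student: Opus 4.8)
The plan is to decompose $B(v) - B(w)$ into the linear potential part and the nonlinear part, and estimate each separately. Write
\[
B(v) - B(w) = V \cdot (v - w) + \bigl( f(|v|^2) v - f(|w|^2) w \bigr).
\]
The first term is immediate: $\| V(v-w) \|_{L^2} \leq \| V \|_{L^\infty} \| v - w \|_{L^2}$. So the work is entirely in the nonlinear term $g(v) := f(|v|^2) v = \beta |v|^{2\sigma} v$, and the goal is a pointwise bound of the form $|g(v) - g(w)| \leq C(M) |v - w|$ valid whenever $|v|, |w| \leq M$, after which integrating in $x$ finishes the proof.

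First I would reduce the pointwise estimate to a statement about the scalar function $G: \C \to \C$, $G(z) = |z|^{2\sigma} z$, on the closed disk $\{ |z| \leq M \}$. The cleanest route is to show $G$ is Lipschitz on this disk with constant depending only on $\sigma$ and $M$. When $\sigma \geq 1/2$, $G$ is actually $C^1$ (its real Jacobian entries involve $|z|^{2\sigma}$ and $|z|^{2\sigma - 2} z^2$-type terms, all bounded by $|z|^{2\sigma - 1}$ up to constants, hence bounded by $M^{2\sigma-1}$ on the disk), so the mean value inequality along the segment joining $w$ to $v$ — which stays in the disk by convexity — gives the bound directly. For $0 < \sigma < 1/2$ the derivative of $|z|^{2\sigma}$ blows up at the origin, so instead I would use the elementary inequality $\bigl| |a|^{\gamma} - |b|^{\gamma} \bigr| \leq |a - b|^{\gamma}$ for $0 < \gamma \leq 1$ together with splitting $G(v) - G(w) = (|v|^{2\sigma} - |w|^{2\sigma}) v + |w|^{2\sigma}(v - w)$: the second piece is bounded by $M^{2\sigma} |v-w|$, and the first by $|v-w|^{2\sigma} \cdot M$, which on the bounded set $|v - w| \leq 2M$ is in turn $\leq M \cdot (2M)^{2\sigma - 1} |v - w|$ since $2\sigma - 1 < 0$ and $|v-w| \leq 2M$. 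Either way one obtains $|g(v) - g(w)| \leq |\beta|\, C(\sigma, M)\, |v - w|$ pointwise a.e.

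Then I would simply integrate: squaring and integrating the pointwise bound over $\Omega$ gives $\| g(v) - g(w) \|_{L^2} \leq |\beta| C(\sigma, M) \| v - w \|_{L^2}$, and combining with the potential term yields the claimed constant $C(\| V \|_{L^\infty}, M)$ (absorbing the fixed parameters $\beta, \sigma$ into the constant, as is consistent with the paper's conventions). The only mildly delicate point — and the one place to be careful — is the low-regularity regime $0 < \sigma < 1/2$, where $g$ is merely Hölder at the origin; the trick above circumvents this by exploiting that we are on a \emph{bounded} set, so a Hölder bound upgrades to a Lipschitz bound at the cost of a constant depending on $M$. Since the lemma is quoted from \cite{bao2023_improved}, it would suffice to cite it, but this self-contained argument is short enough to include.
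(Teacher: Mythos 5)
Your overall strategy (split off the potential term, reduce to a pointwise Lipschitz bound for $G(z)=\beta|z|^{2\sigma}z$ on the disk $\{|z|\le M\}$, then integrate) is the right one, and the paper itself only cites \cite{bao2023_improved} for this lemma rather than proving it. However, your treatment of the case $0<\sigma<1/2$ contains a genuine error: the step claiming $|v-w|^{2\sigma}\le (2M)^{2\sigma-1}|v-w|$ for $|v-w|\le 2M$ is false, and the inequality in fact goes the other way. Writing $t=|v-w|$, the claim is equivalent to $t^{2\sigma-1}\le (2M)^{2\sigma-1}$, but since $2\sigma-1<0$ the map $t\mapsto t^{2\sigma-1}$ is \emph{decreasing}, so $t\le 2M$ gives $t^{2\sigma-1}\ge (2M)^{2\sigma-1}$, with blow-up as $t\to 0^+$. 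The underlying heuristic that ``a H\"older bound upgrades to a Lipschitz bound on a bounded set'' is simply not true: $t\mapsto\sqrt{t}$ on $[0,1]$ is H\"older-$\tfrac12$ but not Lipschitz. Once you pass to $\bigl||v|^{2\sigma}-|w|^{2\sigma}\bigr|\le |v-w|^{2\sigma}$ you have discarded exactly the structure needed to recover a factor of $|v-w|$ to the first power.

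The lemma is nevertheless true, and there are two easy repairs. The cleanest is to observe that no case split on $\sigma$ is needed at all: $G(z)=\beta|z|^{2\sigma}z$ is $C^1$ on all of $\C$ for \emph{every} $\sigma>0$, because the extra factor of $z$ tames the singularity of $|z|^{2\sigma}$ at the origin. A direct computation gives real partial derivatives bounded in modulus by $(2\sigma+1)|\beta|\,|z|^{2\sigma}$ (not $|z|^{2\sigma-1}$, as you wrote), which is $\le (2\sigma+1)|\beta|M^{2\sigma}$ on the disk and tends to $0$ as $z\to0$; the mean value inequality along the segment from $w(x)$ to $v(x)$ then yields $|G(v)-G(w)|\le (2\sigma+1)|\beta|M^{2\sigma}|v-w|$ for all $\sigma>0$. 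Alternatively, if you want to keep your algebraic split $G(v)-G(w)=\beta(|v|^{2\sigma}-|w|^{2\sigma})v+\beta|w|^{2\sigma}(v-w)$, replace the H\"older inequality by the concavity bound $|a^{2\sigma}-b^{2\sigma}|\le \max(a,b)^{2\sigma-1}|a-b|$ for $a,b\ge0$ and $2\sigma\le1$ (which follows from $1-\lambda^{2\sigma}\le 1-\lambda$ for $\lambda\in[0,1]$); the prefactor $|v|\le\max(|v|,|w|)$ then absorbs the negative power, giving $\max(|v|,|w|)^{2\sigma}|v-w|\le M^{2\sigma}|v-w|$. Either fix restores the pointwise Lipschitz estimate, after which your integration step is fine.
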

Let $ dB(\cdot)[\cdot] $ be the G\^ateaux derivative defined as (see also \cite{bao2023_semi_smooth})
\begin{equation}\label{eq:Gateaux}
	dB(v)[w]:= \lim_{\varepsilon \rightarrow 0} \frac{B(v + \varepsilon w) - B(v)}{\vep}, 
\end{equation}
where the limit is taken for real $\vep$. 
Introduce a continuous function $G:\C \rightarrow \C$ as 
\begin{equation}\label{eq:G_def}
	G(z) = \left\{
	\begin{aligned}
		& f^\prime(|z|^2) z^2=\beta \sigma |z|^{2\sigma-2} z^2, &z \neq 0, \\
		&0, &z=0,
	\end{aligned}
	\right. \qquad z \in \C.
\end{equation}
Plugging the expression of $B$ \cref{eq:B_def} into \cref{eq:Gateaux}, we obtain
\begin{align}\label{eq:dB_def}
	dB(v)[w]=-i \left[ Vw + f(|v|^2) w + f'(|v|^2)|v|^2w + G(v) \overline{w} \right], 
\end{align}
where $G(v)(x) = G(v(x))$ for $x \in \Omega$. Then we have the following. 
\begin{lemma}\label{lem:dB1}
	Under the assumptions $ V \in L^\infty(\Omega) $ and $ \sigma > 0 $, for any $ v, w \in L^2(\Omega) $ satisfying $ \| v \|_{L^\infty} \leq M $, we have
	\begin{equation*}
		\| dB(v)[w] \|_{L^2} \leq C(\| V \|_{L^\infty}, M) \| w \|_{L^2}. 
	\end{equation*}
\end{lemma}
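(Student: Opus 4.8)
The plan is to prove the bound pointwise in $x$ and then integrate over $\Omega$. Starting from the explicit formula \cref{eq:dB_def},
\begin{equation*}
	dB(v)[w]=-i \left[ Vw + f(|v|^2) w + f'(|v|^2)|v|^2w + G(v) \overline{w} \right],
\end{equation*}
one observes that each of the four summands is a coefficient function times $w$ or $\overline w$, with the coefficients being $V$, $f(|v|^2)$, $f'(|v|^2)|v|^2$, and $G(v)$. Using the power-type form \cref{eq:f} and the definition \cref{eq:G_def}, I would first record the pointwise identities $|f(|v(x)|^2)| = |\beta|\,|v(x)|^{2\sigma}$, $\bigl|f'(|v(x)|^2)|v(x)|^2\bigr| = |\beta|\sigma\,|v(x)|^{2\sigma}$, and $|G(v(x))| = |\beta|\sigma\,|v(x)|^{2\sigma}$. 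Here it is worth noting that, although $f'(\rho)=\beta\sigma\rho^{\sigma-1}$ is singular at $\rho=0$ when $\sigma<1$, the products $f'(|v|^2)|v|^2$ and $G(v)$ are genuinely continuous and vanish at $v=0$ precisely because $\sigma>0$, so all expressions above are well-defined a.e.

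Next, since $\|v\|_{L^\infty}\le M$, we have $|v(x)|^{2\sigma}\le M^{2\sigma}$ for a.e.\ $x\in\Omega$, and therefore the pointwise estimate
\begin{equation*}
	|dB(v)[w](x)| \le \left(\|V\|_{L^\infty} + |\beta| M^{2\sigma} + 2|\beta|\sigma M^{2\sigma}\right)|w(x)| =: C(\|V\|_{L^\infty}, M)\,|w(x)|
\end{equation*}
holds (with the dependence on the fixed parameters $\beta$ and $\sigma$ absorbed into the constant). Taking the $L^2(\Omega)$-norm of both sides then yields $\|dB(v)[w]\|_{L^2}\le C(\|V\|_{L^\infty},M)\|w\|_{L^2}$, which is the assertion. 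Note that $v,w\in L^2(\Omega)$ together with $v\in L^\infty(\Omega)$ is exactly enough for the right-hand side above to be in $L^2$, so no additional regularity is needed.

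I do not expect any real obstacle here: the argument is a direct pointwise majorization in the same spirit as the estimate in \cref{lem:diff_B}. The only point requiring a moment's care is the one flagged above — that the terms $f'(|v|^2)|v|^2$ and $G(v)$ involving the (possibly singular) factor $f'(|v|^2)$ are in fact bounded by $|\beta|\sigma|v|^{2\sigma}$, which is controlled by $M^{2\sigma}$; this is where the hypothesis $\sigma>0$ enters and why no lower bound on $\sigma$ is required.
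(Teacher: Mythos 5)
Your proof is correct and is exactly the intended argument: the paper states this lemma without proof (treating it as immediate from the explicit formula \cref{eq:dB_def}), and the pointwise majorization of the four coefficient functions by $\|V\|_{L^\infty}$ and $|\beta|(1+2\sigma)M^{2\sigma}$, followed by integration, is the standard route. Your remark that the potentially singular factor $f'(|v|^2)$ only ever appears multiplied by $|v|^2$ or $v^2$, so that the products are bounded by $|\beta|\sigma|v|^{2\sigma}$ for any $\sigma>0$, correctly identifies the one point that needs care.
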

By \cref{lem:diff_B}, for $\Hop$ in \cref{eq:def_A_H}, using the boundedness of $e^{it\Delta}$ and $\vphis(t \Delta)$, we have
\begin{equation}\label{eq:diff_H}
	\| \Hop(v) - \Hop(w) \|_{L^2} \leq 2\| B(v) - B(w) \|_{L^2} \leq C(\| V \|_{L^\infty}, M) \| v - w \|_{L^2}. 
\end{equation}

Then we shall prove \cref{thm:first_order_semi}. By Proposition 3.7 of \cite{bao2023_EWI} with $n=0$, noting
\begin{equation*}
	\Psitn{1} - \Psin{1} = (\psi(t_1), \psi(t_0))^T - (\psin{1}, \psi_0)^T = (\psi(t_1) - \psin{1}, 0)^T, 
\end{equation*}
we have the following error estimate of the first step. 
\begin{proposition}\label{prop:fist_step_error}
	Under the assumptions \cref{eq:A1}, we have
	\begin{equation*}
		\left \| \Psitn{1} - \Psin{1} \right \|_{H^\alpha} \lesssim \tau^{2 - \alpha/2}, \quad 0 \leq \alpha \leq 2. 
	\end{equation*}
\end{proposition}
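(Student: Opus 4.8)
The plan is to estimate the single nontrivial component $\psi(t_1)-\psin{1}$ in the $H^\alpha$-norm, since the other component of $\Psitn{1}-\Psin{1}$ vanishes identically. I would start from the exact Duhamel representation of the first step, namely \cref{eq:approx_neq0} before the approximation is made:
\begin{equation*}
	\psi(t_1) = e^{i\tau\Delta}\psi_0 - i \int_0^\tau e^{i(\tau-s)\Delta} B(\psi(t_0+s))\,\rmd s,
\end{equation*}
and subtract the scheme definition $\psin{1} = e^{i\tau\Delta}\psi_0 - i\tau\vphi_1(i\tau\Delta)B(\psi_0)$, which is exactly $-i\int_0^\tau e^{i(\tau-s)\Delta}B(\psi_0)\,\rmd s$. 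This gives the clean identity
\begin{equation*}
	\psi(t_1) - \psin{1} = -i\int_0^\tau e^{i(\tau-s)\Delta}\bigl[B(\psi(t_0+s)) - B(\psi_0)\bigr]\,\rmd s.
\end{equation*}
The whole proof reduces to bounding this one integral in $H^\alpha$ for $0\le\alpha\le2$.

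For the $L^2$ endpoint $\alpha=0$: using the isometry of $e^{i(\tau-s)\Delta}$ on $L^2$ and \cref{lem:diff_B} (with $M$ chosen as a bound for $\|\psi\|_{C([0,T];L^\infty)}$, which is finite by the embedding $H^2\hookrightarrow L^\infty$ and the $H^2$-regularity in \cref{eq:A1}), I would estimate $\|B(\psi(t_0+s))-B(\psi_0)\|_{L^2}\lesssim \|\psi(t_0+s)-\psi_0\|_{L^2}\lesssim s$, the last step by the $C^1([0,T];L^2)$-regularity of $\psi$ from \cref{eq:A1}. Integrating $s$ over $[0,\tau]$ gives $\|\psi(t_1)-\psin{1}\|_{L^2}\lesssim\tau^2$, which is the $\alpha=0$ case. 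For the $H^2$ endpoint $\alpha=2$, one cannot differentiate $B$ twice in space under the low-regularity assumptions, so instead I would not try to gain a power of $\tau$: bound $\|e^{i(\tau-s)\Delta}[B(\psi(t_0+s))-B(\psi_0)]\|_{H^2}\lesssim \|B(\psi(t_0+s))\|_{H^2}+\|B(\psi_0)\|_{H^2}\lesssim 1$ (using $V\in L^\infty$ together with $\psi\in H^2$ — here one needs the algebra/composition bound $\|B(v)\|_{H^2}\le C(\|v\|_{H^2})$, which under $L^\infty$-potential and $\sigma>0$ requires care but is available from the estimates in \cite{bao2023_improved,bao2023_EWI} used elsewhere in this section), hence $\|\psi(t_1)-\psin{1}\|_{H^2}\lesssim\tau=\tau^{2-2/2}$. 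The intermediate orders $0<\alpha<2$ then follow by interpolation between these two endpoints: $\tau^{2}\,{}^{1-\alpha/2}\cdot\tau^{1\cdot\alpha/2} = \tau^{2-\alpha/2}$, matching the claimed exponent.

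The main obstacle I anticipate is the $H^2$-endpoint bound $\|B(v)\|_{H^2}\lesssim C(\|v\|_{H^2})$ under only $V\in L^\infty$: the term $Vv$ is merely in $L^2$ when $V\in L^\infty$ and $v\in H^2$, so $e^{i(\tau-s)\Delta}(Vv)$ is only bounded in $L^2$, not $H^2$. This forces a more delicate argument: one should \emph{not} put the full difference into $H^2$ naively, but rather split $B = V\cdot({}) + f(|\cdot|^2)(\cdot)$ and treat the potential part and the nonlinear part separately, exploiting the smoothing/boundedness of $e^{i(\tau-s)\Delta}$ on negative-order spaces or, more likely, following exactly the mechanism already set up in \cite{bao2023_EWI} (this proposition is literally invoked as ``Proposition 3.7 of \cite{bao2023_EWI} with $n=0$'' in the excerpt, so the honest plan is to verify that the general-$n$ one-step estimate there specializes correctly, checking that the first-step scheme here coincides with the one-step map analyzed there and that the stated regularity in \cref{eq:A1} supplies the hypotheses of that proposition). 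In short: reduce to the Duhamel remainder identity, handle $\alpha=0$ by Lipschitz-in-$L^2$ plus $C^1$-in-time regularity, handle $\alpha=2$ by uniform $H^2$-boundedness of $B$ along the exact trajectory, interpolate, and the delicate point is the $H^2$-boundedness of the $L^\infty$-potential term, which is precisely where one leans on the prior work.
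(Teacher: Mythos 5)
Your reduction to the Duhamel remainder identity, the $L^2$ endpoint via \cref{lem:diff_B} and the $C^1([0,T];L^2)$-regularity in \cref{eq:A1}, and the final interpolation are exactly the paper's route --- which, for this proposition, consists of nothing more than invoking Proposition 3.7 of \cite{bao2023_EWI} with $n=0$, since the first step of the sEWI coincides with one step of the first-order EWI analyzed there. So the skeleton is right and your ``honest plan'' is literally what the paper does.

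The one genuine issue is the $H^2$ endpoint, and your proposed resolution points in the wrong direction. You correctly observe that $\| B(v) \|_{H^2}\lesssim 1$ fails when $V$ is merely $L^\infty$; but the fix is not some ``$H^2$-boundedness of the potential term'' to be extracted from the prior work --- no such bound holds or is used there. The actual mechanism is the smoothing estimate \cref{eq:H2_1} (Lemma 3.6 of \cite{bao2023_EWI}), which the paper records immediately after this proposition: for $u_1=-i\int_0^\tau e^{i(\tau-s)\Delta}v(s)\,\rmd s$ one has $\| \Delta u_1 \|_{L^2}\le 2\| v \|_{L^\infty([0,\tau];L^2)}+\tau\| \partial_t v \|_{L^\infty([0,\tau];L^2)}$, obtained by integrating by parts in time (using $\Delta e^{i(\tau-s)\Delta}=i\partial_s e^{i(\tau-s)\Delta}$), so that only $L^2$-in-space control of the integrand and of its time derivative is required. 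Applying this with $v(s)=g(s):=B(\psi(s))-B(\psi_0)$, where $\| g \|_{L^\infty([0,\tau];L^2)}\lesssim\tau$ by \cref{lem:diff_B} and $\| \partial_t g \|_{L^\infty([0,\tau];L^2)}=\| dB(\psi(\cdot))[\partial_t\psi(\cdot)] \|_{L^\infty([0,\tau];L^2)}\lesssim 1$ by \cref{lem:dB1}, gives $\| \Delta(\psi(t_1)-\psin{1}) \|_{L^2}\lesssim\tau$ directly; interpolating against your $L^2$ bound then yields $\tau^{2-\alpha/2}$. This is the same device the paper uses (via \cref{eq:H2_2} and \cref{eq:est_g}) for the local error of the two-step recursion in \cref{prop:local_error_first}, so it is worth making explicit rather than deferring to the citation.
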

By Lemma 3.6 in \cite{bao2023_EWI}, for $ v \in C([0, \tau]; L^2(\Omega)) \cap W^{1, \infty}([0, \tau]; L^2(\Omega)) $ and 
\begin{equation*}
	u_1 = -i\int_0^\tau e^{i(\tau - s)\Delta} v(s) \rmd s,  
\end{equation*}
we have 
\begin{equation}\label{eq:H2_1}
	\| \Delta u_1 \|_{L^2} \leq 2\| v \|_{L^\infty([0, \tau]; L^2)} + \tau \| \partial_t v \|_{L^\infty([0, \tau]; L^2)}. 
\end{equation} 
As an immediate corollary, if $ v \in C([-\tau, \tau]; L^2(\Omega)) \cap W^{1, \infty}([-\tau, \tau]; L^2(\Omega)) $ and 
\begin{equation*}
	u_2 = -i\int_{-\tau}^\tau e^{i(\tau - s)\Delta} v(s) \rmd s, 
\end{equation*}
we have
\begin{equation}\label{eq:H2_2}
	\| \Delta u_2 \|_{L^2} \leq 4\| v \|_{L^\infty([-\tau, \tau]; L^2)} + 2\tau \| \partial_t v \|_{L^\infty([-\tau, \tau]; L^2)}. 
\end{equation}

\begin{proposition}\label{prop:local_error_first}
	Under the assumptions \cref{eq:A1}, we have
	\begin{equation*}
		\| \Psitn{n+1} - \Phit^\tau(\Psitn{n}) \|_{H^\alpha} \lesssim \tau^{2 - \alpha/2}, \quad 0 \leq \alpha \leq 2, \quad 1 \leq n \leq T/\tau-1. 
	\end{equation*}
\end{proposition}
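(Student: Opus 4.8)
The plan is to compare the exact recursion \eqref{eq:exact} with the numerical flow $\Phit^\tau$ applied to the exact solution vector, so that the local error is exactly the Gautschi quadrature error in \eqref{eq:approximation}. Writing $\Psitn{n+1} - \Phit^\tau(\Psitn{n})$ componentwise, the second component vanishes identically (it is just $\psi(t_n) - \psi(t_n)$), so only the first component matters. By \eqref{eq:exact} and \eqref{eq:phit_def}, and using $e^{2i\tau\Delta}\psi(t_{n-1}) = e^{2i\tau\Delta}\psi(t_{n-1})$ on both sides, the first component equals
\begin{equation*}
	\mathcal{E}^n := -i\int_{-\tau}^{\tau} e^{i(\tau-s)\Delta}\bigl[ B(\psi(t_n+s)) - B(\psi(t_n)) \bigr]\,\rmd s .
\end{equation*}
So the whole task reduces to estimating $\|\mathcal{E}^n\|_{H^\alpha}$ for $0 \le \alpha \le 2$, and then interpolating; in fact it suffices to get the two endpoints $\alpha = 0$ and $\alpha = 2$ and invoke the standard interpolation inequality $\|\cdot\|_{H^\alpha} \lesssim \|\cdot\|_{L^2}^{1-\alpha/2}\|\cdot\|_{H^2}^{\alpha/2}$.

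For the $L^2$ endpoint I would use the unitarity of $e^{i(\tau-s)\Delta}$ on $L^2$, pull the norm inside the integral, and bound $\|B(\psi(t_n+s)) - B(\psi(t_n))\|_{L^2}$. Here \cref{lem:diff_B} applies: under \eqref{eq:A1} we have $\psi \in C([0,T];H^2_\text{per}(\Omega)) \hookrightarrow C([0,T];L^\infty(\Omega))$, so $\psi(t_n+s)$ and $\psi(t_n)$ are uniformly bounded in $L^\infty$ by some $M$ depending only on $\|\psi\|_{C([0,T];H^2)}$, giving
\begin{equation*}
	\| B(\psi(t_n+s)) - B(\psi(t_n)) \|_{L^2} \le C(\|V\|_{L^\infty}, M)\, \|\psi(t_n+s) - \psi(t_n)\|_{L^2} \le C\, |s|\, \|\partial_t\psi\|_{L^\infty([0,T];L^2)},
\end{equation*}
where the last step uses $\psi \in C^1([0,T];L^2)$. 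Integrating $|s|$ over $[-\tau,\tau]$ yields $\|\mathcal{E}^n\|_{L^2} \lesssim \tau^2$, which is the $\alpha = 0$ bound. For the $H^2$ endpoint I would apply the corollary estimate \eqref{eq:H2_2} with $v(s) = B(\psi(t_n+s)) - B(\psi(t_n))$: this controls $\|\Delta \mathcal{E}^n\|_{L^2}$ (and hence $\|\mathcal{E}^n\|_{H^2}$, combined with the $L^2$ bound) by $4\|v\|_{L^\infty([-\tau,\tau];L^2)} + 2\tau\|\partial_t v\|_{L^\infty([-\tau,\tau];L^2)}$. The first term is $O(\tau)$ by the Lipschitz estimate above, and $\partial_t v(s) = dB(\psi(t_n+s))[\partial_t\psi(t_n+s)]$, which is bounded in $L^2$ by $C(\|V\|_{L^\infty},M)\|\partial_t\psi\|_{L^\infty([0,T];L^2)}$ via \cref{lem:dB1}; so $\|\mathcal{E}^n\|_{H^2} \lesssim \tau$, the $\alpha = 2$ bound. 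Interpolation between $\tau^2$ and $\tau^1$ gives exactly $\tau^{2-\alpha/2}$.

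The main technical point — and the step most in need of care — is the $H^2$ endpoint, specifically the use of \cref{lem:dB1} to differentiate $B(\psi(t_n+s))$ in $s$. One must verify that the chain rule $\partial_s B(\psi(t_n+s)) = dB(\psi(t_n+s))[\partial_s\psi(t_n+s)]$ holds in the $L^2$-sense for the low-regularity nonlinearity $f(\rho) = \beta\rho^\sigma$ with $\sigma > 0$ possibly small (so $f$ need not be $C^1$ in the classical Fréchet sense), and that $v(s)$ indeed lies in $W^{1,\infty}([-\tau,\tau];L^2)$ as required by the hypothesis of \eqref{eq:H2_2}; this is essentially the content guaranteed by the G\^ateaux-derivative framework of \eqref{eq:Gateaux}–\eqref{eq:dB_def} together with \cref{lem:dB1}, and is the same mechanism already used for the first-order EWI in \cite{bao2023_EWI}, so I would cite that machinery rather than reprove it. Everything else is routine: unitarity of the free propagator, boundedness of $\vphis(\tau\Delta)$ on $H^2_\text{per}$, and Sobolev embedding.
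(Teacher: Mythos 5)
Your proposal is correct and follows essentially the same route as the paper's proof: reduce to the first component, write the local error as $-i\int_{-\tau}^{\tau} e^{i(\tau-s)\Delta}[B(\psi(t_n+s))-B(\psi(t_n))]\,\rmd s$, bound the $L^2$ endpoint by unitarity and \cref{lem:diff_B}, bound the $H^2$ endpoint via \cref{eq:H2_2} with $\partial_s B(\psi(t_n+s)) = dB(\psi(t_n+s))[\partial_s\psi(t_n+s)]$ controlled by \cref{lem:dB1}, and interpolate. Your added remark on verifying the chain rule in the G\^ateaux sense is a legitimate point of care that the paper handles by the same citation to the first-order EWI machinery.
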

\begin{proof}
	Recalling \cref{eq:Psitn_def,eq:numerical_flow_semi}, we have, for $ 1 \leq n \leq T/\tau-1 $, 
	\begin{equation}\label{eq:LTE_def}
		\begin{aligned}
			\Psitn{n+1} - \Phit^\tau(\Psitn{n}) = 
			\begin{pmatrix}
				\psi(t_{n+1}) - \phit^\tau(\psi(t_{n}), \psi(t_{n-1})) \\
				0
			\end{pmatrix} =:
			\begin{pmatrix}
				\mcalL{n} \\
				0
			\end{pmatrix}.  
		\end{aligned} 
	\end{equation}
	Recalling \cref{eq:phit_def,eq:approximation}, we have
	\begin{align}\label{eq:numeric}
		\phit^\tau(\psi(t_{n}), \psi(t_{n-1})) 
		&= e^{2i\tau\Delta} \psi(t_{n-1}) - 2i \tau e^{i\tau\Delta} \vphis(\tau\Delta) B (\psi(t_{n})) \notag \\
		&= e^{2i\tau\Delta} \psi(t_{n-1}) - i\int_{-\tau}^\tau e^{i(\tau - s)\Delta} B(\psi(t_n)) \rmd s. 
	\end{align}
	From the definition of $\mcalL{n}$ in \cref{eq:LTE_def}, subtracting \cref{eq:numeric} from \cref{eq:exact}, we have 
	\begin{equation}\label{eq:LTE}
		\mcalL{n} = -i\int_{-\tau}^\tau e^{i(\tau - s)\Delta} \left[B(\psi(t_n + s)) - B(\psi(t_n))\right] \rmd s, \quad 1 \leq n \leq T/\tau-1.  
	\end{equation}
	By \cref{lem:diff_B,lem:dB1}, noting that $\partial_t B(\psi(t)) = dB(\psi(t))[\partial_t \psi(t)]$, we have, for $g(s) := B(\psi(t_n + s)) - B(\psi(t_n))$ with $-\tau \leq s \leq \tau$, 
	\begin{equation}\label{eq:est_g}
		\| g \|_{L^\infty([-\tau, \tau]; L^2)} \lesssim \tau, \quad \| \partial_t g \|_{L^\infty([-\tau, \tau]; L^2)} \lesssim 1. 
	\end{equation}		
	From \cref{eq:LTE}, using \cref{eq:H2_2}, noting \cref{eq:est_g}, we obtain, 
	\begin{equation}\label{eq:H2_est_LTE}
		\| \Delta \mcalL{n} \|_{L^2} \lesssim \tau. 
	\end{equation}
	From \cref{eq:LTE}, using the isometry property of $e^{it\Delta}$ and \cref{eq:est_g}, we have 
	\begin{equation}\label{eq:L2_est_LTE}
		\| \mcalL{n} \|_{L^2} \leq \int_{-\tau}^\tau \| g(s) \|_{L^2}  \rmd s \leq 2\tau \| g \|_{L^\infty([-\tau, \tau]; L^2)} \lesssim \tau ^2. 
	\end{equation}
	The conclusion then follows from \cref{eq:H2_est_LTE,eq:L2_est_LTE}, and the standard interpolation inequalities immediately. 
\end{proof}

For the operator $\Aop{t}$ defined in \cref{eq:def_A_H}, we have the following estimate, which shall be compared with the isometry property of the free Schr\"odinger group $e^{i t \Delta}$. 
\begin{lemma}\label{lem:A}
	Let $\mathbf{v} = (v_1, v_2)^T \in [H^2_\text{\rm per}(\Omega)]^2$. For any $t\geq0$, we have
	\begin{equation*}
		\| \Aop{t} \mathbf{v} \|_{H^\alpha} = \| \mathbf v \|_{H^\alpha}, \quad  0 \leq \alpha \leq 2. 
	\end{equation*}
\end{lemma}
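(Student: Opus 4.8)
The plan is to prove the identity componentwise using the explicit block structure of $\Aop{t}$ together with the isometry property of the free Schr\"odinger group $e^{it\Delta}$ in periodic Sobolev spaces. Recall from \cref{eq:def_A_H} that
\begin{equation*}
	\Aop{t} \mathbf{v} = \begin{pmatrix} 0 & e^{2it\Delta} \\ I & 0 \end{pmatrix} \begin{pmatrix} v_1 \\ v_2 \end{pmatrix} = \begin{pmatrix} e^{2it\Delta} v_2 \\ v_1 \end{pmatrix}.
\end{equation*}
Since the $H^\alpha$-norm on the product space $[H^\alpha_\text{per}(\Omega)]^2$ is (by our convention for vector-valued Sobolev spaces) given by $\| \mathbf{v} \|_{H^\alpha}^2 = \| v_1 \|_{H^\alpha}^2 + \| v_2 \|_{H^\alpha}^2$, it suffices to show $\| e^{2it\Delta} v_2 \|_{H^\alpha} = \| v_2 \|_{H^\alpha}$ and $\| v_1 \|_{H^\alpha} = \| v_1 \|_{H^\alpha}$ (the latter being trivial), and then add.

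First I would establish the $H^\alpha$-isometry of $e^{2it\Delta}$ by passing to Fourier space. For $w \in H^\alpha_\text{per}(\Omega)$ with Fourier coefficients $\widehat{w}_l$ as in \cref{eq:ft}, we have $\widehat{(e^{2it\Delta} w)}_l = e^{-2it\mu_l^2} \widehat{w}_l$, so that $|\widehat{(e^{2it\Delta} w)}_l| = |\widehat{w}_l|$ for every $l \in \Z$ since $|e^{-2it\mu_l^2}| = 1$. Because the $H^\alpha$-norm admits the Fourier characterization $\| w \|_{H^\alpha}^2 \sim \sum_{l \in \Z} (1 + \mu_l^2)^\alpha |\widehat{w}_l|^2$ (and this holds as an exact equality for the standard choice of equivalent norm), the modulus-preserving action of the multiplier $e^{-2it\mu_l^2}$ immediately yields $\| e^{2it\Delta} w \|_{H^\alpha} = \| w \|_{H^\alpha}$ for all $0 \le \alpha \le 2$ (indeed for all $\alpha \ge 0$). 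Combining this with the trivial identity for the second component gives
\begin{equation*}
	\| \Aop{t} \mathbf{v} \|_{H^\alpha}^2 = \| e^{2it\Delta} v_2 \|_{H^\alpha}^2 + \| v_1 \|_{H^\alpha}^2 = \| v_2 \|_{H^\alpha}^2 + \| v_1 \|_{H^\alpha}^2 = \| \mathbf{v} \|_{H^\alpha}^2,
\end{equation*}
which is the claim.

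This lemma is essentially a bookkeeping statement, so I do not anticipate a genuine obstacle; the only point requiring a little care is fixing the convention for the norm on the product space and for the $H^\alpha$-norm itself so that the isometry is exact rather than merely an equivalence. One should state at the outset (or recall from the notational conventions in the introduction) that $\|\mathbf{v}\|_{H^\alpha}^2 = \|v_1\|_{H^\alpha}^2 + \|v_2\|_{H^\alpha}^2$ and that $\|w\|_{H^\alpha}$ is taken to be the Fourier-side norm $\bigl(\sum_l (1+\mu_l^2)^\alpha |\widehat w_l|^2\bigr)^{1/2}$; with those fixed, the proof is the two displays above. If instead one only wants the equivalence-of-norms version, the argument is identical and the constants are simply those relating the chosen $H^\alpha$-norm to the Fourier-side norm, which cancel.
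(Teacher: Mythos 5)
Your proof is correct and follows essentially the same route as the paper: apply $\Aop{t}$ componentwise, use the $H^\alpha$-isometry of $e^{2it\Delta}$ on the first resulting component, and sum the squared norms. The paper simply cites the isometry of $e^{it\Delta}$ as known, whereas you additionally justify it via the Fourier multiplier $e^{-2it\mu_l^2}$; this is extra detail, not a different argument.
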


\begin{proof}
	Recalling \cref{eq:def_A_H} and using the isometry property of $e^{it\Delta}$, we have
	\begin{equation*}
		\| \Aop{t} \mathbf{v} \|_{H^\alpha}^2 = \| e^{2it\Delta} v_2 \|_{H^\alpha}^2 + \| v_1 \|_{H^\alpha}^2 = \| v_2 \|_{H^\alpha}^2 + \| v_1 \|_{H^\alpha}^2 = \| \mathbf{v} \|_{H^\alpha}^2, 
	\end{equation*}
	which completes the proof. 
\end{proof}

By Parseval's identity, noting that $ |\vphis(\theta)| \leq 1 $ for $\theta \in \R$, we immediately have $\vphis(\tau \Delta)$ is bounded from $H^m_\text{per}(\Omega)$ to $H^m_\text{per}(\Omega)$ for all $ m \in \Z^+ $ and $\tau > 0$. Besides, we have the following analogs of Lemma 3.8 in \cite{bao2023_EWI}. 
\begin{lemma}\label{lem:smoothingofphis}
	Let $ v \in L^2(\Omega) $. For any $0 <\tau < 1$, we have
	\begin{equation*}
		\| \vphi_s(\tau\Delta) v \|_{H^{\alpha}} \lesssim \tau^{-\alpha/2} \| v \|_{L^2}, \quad 0 \leq \alpha \leq 2. 
	\end{equation*}
\end{lemma}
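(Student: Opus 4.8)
The plan is to reduce the claim to a Fourier-side estimate and then exploit a pointwise inequality for the multiplier $\theta\mapsto\vphis(\theta)=\sinc(\theta)$. Writing $v=\sum_{l\in\Z}\widehat v_l\,e^{i\mu_l(x-a)}$, Parseval's identity gives
\begin{equation*}
	\| \vphis(\tau\Delta) v \|_{H^\alpha}^2 \eqsim \sum_{l\in\Z} (1+\mu_l^2)^\alpha \,\bigl|\vphis(-\tau\mu_l^2)\bigr|^2\, |\widehat v_l|^2,
\end{equation*}
since $\Delta$ acts on the $l$-th Fourier mode as multiplication by $-\mu_l^2$ and $\vphis$ is even so $\vphis(-\tau\mu_l^2)=\vphis(\tau\mu_l^2)$. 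Comparing with $\|v\|_{L^2}^2\eqsim\sum_l|\widehat v_l|^2$, it suffices to show the uniform bound $(1+\mu_l^2)^{\alpha/2}\,|\vphis(\tau\mu_l^2)|\lesssim \tau^{-\alpha/2}$ for all $l$, or equivalently the scalar inequality
\begin{equation}\label{eq:scalar_phis}
	(1+\xi)^{\alpha/2}\,|\sinc(\tau\xi)| \lesssim \tau^{-\alpha/2}, \qquad \xi\geq 0,\ 0<\tau<1,\ 0\leq\alpha\leq 2,
\end{equation}
applied with $\xi=\mu_l^2$.

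To prove \cref{eq:scalar_phis} I would split into two regimes according to the size of $\tau\xi$. When $\tau\xi\leq 1$, we use $|\sinc(\tau\xi)|\leq 1$ and note $(1+\xi)^{\alpha/2}\leq(1+1/\tau)^{\alpha/2}\leq(2/\tau)^{\alpha/2}\lesssim\tau^{-\alpha/2}$ since $\tau<1$. When $\tau\xi\geq 1$, we use the decay $|\sinc(\tau\xi)|\leq 1/(\tau\xi)$; then
\begin{equation*}
	(1+\xi)^{\alpha/2}\,|\sinc(\tau\xi)| \leq \frac{(1+\xi)^{\alpha/2}}{\tau\xi} \leq \frac{(2\xi)^{\alpha/2}}{\tau\xi} = 2^{\alpha/2}\,\tau^{-1}\,\xi^{\alpha/2-1}\leq 2^{\alpha/2}\,\tau^{-1}\,\tau^{1-\alpha/2} = 2^{\alpha/2}\,\tau^{-\alpha/2},
\end{equation*}
where in the second inequality I used $\xi\geq 1/\tau\geq 1$ so $1+\xi\leq 2\xi$, and in the penultimate step I used $\alpha/2-1\leq 0$ together with $\xi\geq 1/\tau$, i.e. $\xi^{\alpha/2-1}\leq(1/\tau)^{\alpha/2-1}=\tau^{1-\alpha/2}$. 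Combining the two regimes yields \cref{eq:scalar_phis} with an absolute constant (at most $2$), and the lemma follows. An alternative, slightly slicker route is to interpolate: for $\alpha=0$ it is the trivial bound $\|\vphis(\tau\Delta)v\|_{L^2}\leq\|v\|_{L^2}$, while for $\alpha=2$ one checks $\|\vphis(\tau\Delta)v\|_{H^2}\lesssim\tau^{-1}\|v\|_{L^2}$ via the same two-regime split (now only needed at $\alpha=2$), and then the general case follows by the standard interpolation inequality $\|w\|_{H^\alpha}\lesssim\|w\|_{L^2}^{1-\alpha/2}\|w\|_{H^2}^{\alpha/2}$ applied to $w=\vphis(\tau\Delta)v$.

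The only mild subtlety — and the closest thing to an obstacle — is bookkeeping the equivalence of the discrete $H^\alpha$ norm $\sum_l(1+\mu_l^2)^\alpha|\widehat v_l|^2$ with $\|v\|_{H^\alpha}^2$ for non-integer $\alpha$ on the periodic domain, and making sure the multiplier bound is genuinely uniform in $l$ (including $l=0$, where $\vphis(0)=1$ and the estimate reads $1\lesssim\tau^{-\alpha/2}$, true since $\tau<1$). Both points are standard for Fourier series on a torus, and the constants produced are harmless; no regularity of $V$ or $f$ enters, since this is a purely linear statement about the operator $\vphis(\tau\Delta)$. This mirrors Lemma~3.8 of \cite{bao2023_EWI} for $\vphi_1(i\tau\Delta)$, where the relevant scalar bound is instead $(1+\xi)^{\alpha/2}|\vphi_1(-i\tau\xi)|\lesssim\tau^{-\alpha/2}$, proved by the same dichotomy using $|\vphi_1(i\eta)|\leq\min(1,2/|\eta|)$.
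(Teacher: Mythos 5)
Your proof is correct and follows exactly the argument the paper intends: the lemma is stated without proof as an ``analog of Lemma 3.8 in \cite{bao2023_EWI}'', and that lemma is proved by precisely your Fourier-multiplier dichotomy (bound $|\sinc(\tau\xi)|$ by $1$ when $\tau\xi\leq 1$ and by $1/(\tau\xi)$ when $\tau\xi\geq 1$, then use $\tau<1$ and $0\leq\alpha\leq2$). Both the scalar estimate and the Parseval reduction check out, so nothing further is needed.
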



With \cref{lem:A,lem:smoothingofphis}, we can establish the $L^\infty$-conditional stability estimate of $\Phit^\tau$ defined in \cref{eq:numerical_flow_semi}. The proof is similar to that of Proposition 3.10 in \cite{bao2023_EWI} and thus is omitted. 
\begin{proposition}[$L^\infty$-conditional stability estimate]\label{prop:stability}
	Assume that $V \in L^\infty(\Omega)$. Let $\mathbf v = (v_1, v_0)^T, \mathbf w = (w_1, w_0)^T $ such that $ v_j, w_j \in H_\text{\rm per}^2(\Omega) $ for $j = 0, 1$ with $ \| v_1 \|_{L^\infty} \leq M $ and $\| w_1 \|_{L^\infty} \leq M$. For any $\tau > 0$, we have, for $0 \leq \alpha \leq 2$, 
	\begin{equation*}
			\left \| \Phit^\tau(\mathbf{v}) - \Phit^\tau(\mathbf{w}) \right \|_{H^\alpha} \leq \| \mathbf v - \mathbf w \|_{H^\alpha} + C(\| V \|_{L^\infty}, M) \tau^{1-\alpha/2} \| \mathbf v - \mathbf w \|_{L^2}.
	\end{equation*}
\end{proposition}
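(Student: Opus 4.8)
The plan is to prove the $L^\infty$-conditional stability estimate for $\Phit^\tau$ by unfolding the definition \cref{eq:numerical_flow_semi}, namely $\Phit^\tau(\mathbf v) = \Aop{\tau}\mathbf v + \tau\Hop(v_1)$, and exploiting that $\Aop{\tau}$ is an $H^\alpha$-isometry while the difference of the nonlinear parts $\tau[\Hop(v_1) - \Hop(w_1)]$ only ever sees the first component and is controlled in $L^2$ by \cref{eq:diff_H}. First I would write
\begin{equation*}
	\Phit^\tau(\mathbf v) - \Phit^\tau(\mathbf w) = \Aop{\tau}(\mathbf v - \mathbf w) + \tau\bigl(\Hop(v_1) - \Hop(w_1)\bigr),
\end{equation*}
so that by the triangle inequality
\begin{equation*}
	\| \Phit^\tau(\mathbf v) - \Phit^\tau(\mathbf w) \|_{H^\alpha} \leq \| \Aop{\tau}(\mathbf v - \mathbf w) \|_{H^\alpha} + \tau\| \Hop(v_1) - \Hop(w_1) \|_{H^\alpha}.
\end{equation*}
By \cref{lem:A} the first term equals $\| \mathbf v - \mathbf w \|_{H^\alpha}$ exactly, which produces the leading term on the right-hand side of the claimed bound with no loss.

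The second step is to estimate $\tau\| \Hop(v_1) - \Hop(w_1) \|_{H^\alpha}$. Recalling $\Hop(\phi) = (-2i e^{i\tau\Delta}\vphis(\tau\Delta)B(\phi), 0)^T$, the $H^\alpha$-norm of this vector is just $2\| e^{i\tau\Delta}\vphis(\tau\Delta)(B(v_1) - B(w_1)) \|_{H^\alpha}$. Here I would use that $e^{i\tau\Delta}$ is an $H^\alpha$-isometry to drop it, and then apply the smoothing estimate \cref{lem:smoothingofphis} for $\vphis(\tau\Delta)$: for $0 \leq \alpha \leq 2$ and $0 < \tau < 1$,
\begin{equation*}
	\| \vphis(\tau\Delta)(B(v_1) - B(w_1)) \|_{H^\alpha} \lesssim \tau^{-\alpha/2}\| B(v_1) - B(w_1) \|_{L^2}.
\end{equation*}
Then \cref{lem:diff_B}, using the hypotheses $\| v_1 \|_{L^\infty} \leq M$ and $\| w_1 \|_{L^\infty} \leq M$, gives $\| B(v_1) - B(w_1) \|_{L^2} \leq C(\|V\|_{L^\infty}, M)\| v_1 - w_1 \|_{L^2}$, and $\| v_1 - w_1 \|_{L^2} \leq \| \mathbf v - \mathbf w \|_{L^2}$. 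Collecting these factors, the second term is bounded by $C(\|V\|_{L^\infty}, M)\,\tau\cdot\tau^{-\alpha/2}\| \mathbf v - \mathbf w \|_{L^2} = C(\|V\|_{L^\infty}, M)\,\tau^{1-\alpha/2}\| \mathbf v - \mathbf w \|_{L^2}$, which is precisely the correction term in the statement. A small point to handle is the restriction $0 < \tau < 1$ needed by \cref{lem:smoothingofphis}; for $\alpha = 0$ no smoothing is used (one just bounds $|\vphis| \leq 1$), and for $\tau \geq 1$ the constant can absorb the bounded factor $\tau^{\alpha/2}$, so the estimate holds for all $\tau > 0$ as stated, or one simply notes the eventual application only uses small $\tau$.

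The only genuine subtlety — which is why the authors reference Proposition 3.10 of \cite{bao2023_EWI} rather than reproving it — is keeping the $L^2$-norm (not the $H^\alpha$-norm) of $\mathbf v - \mathbf w$ on the nonlinear term: this is what makes the scheme ``conditionally'' stable in $L^\infty$ rather than unconditionally stable, and it is exactly the gain that the $\vphis$-smoothing buys us at the cost of a power $\tau^{1-\alpha/2}$ that stays nonnegative for $\alpha \leq 2$. There is no real obstacle here beyond bookkeeping the operator norms; the argument is a direct transcription of the one-step EWI stability proof, with $\Aop{\tau}$ playing the role of $e^{i\tau\Delta}$ and \cref{lem:A} replacing the isometry property. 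For that reason the proof is omitted in the paper, and the sketch above is the complete content.
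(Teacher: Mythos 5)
Your proof is correct and is precisely the argument the paper intends: the paper omits the proof but points to \cref{lem:A} for the isometry of $\Aop{\tau}$ and \cref{lem:smoothingofphis} for the $\tau^{-\alpha/2}$ smoothing of $\vphis(\tau\Delta)$, combined with \cref{lem:diff_B} via \cref{eq:diff_H}, exactly as you do. The remark about the harmless $0<\tau<1$ restriction is also apt, since only small time steps are used downstream.
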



With \cref{prop:fist_step_error,prop:local_error_first,prop:stability}, the proof of \cref{thm:first_order_semi} can be obtained by following the coupled induction argument used in the proof of (3.2) in Theorem 3.1 of \cite{bao2023_EWI}. We sketch the proof here for the convenience of the reader. 

\begin{proof}[Proof of \cref{thm:first_order_semi}]
	Define the error function $\en{n} := \psi(t_n) - \psin{n}$ for $0 \leq n \leq T/\tau$ and let
	\begin{equation}\label{eq:error_n_def}
		\En{n} := \Psitn{n} - \Psin{n} 
		= (\en{n}, \en{n-1})^T, \quad 1 \leq n \leq T/\tau. 
	\end{equation}
	Recalling \cref{eq:sEWI_matrix}, by the triangle inequality, we get, for $0 \leq \alpha \leq 2$ and $1 \leq n \leq T/\tau-1$,  
	\begin{equation}\label{eq:error_eq_1}
		\| \En{n+1} \|_{H^\alpha} \leq \| \Psitn{n+1} - \Phit^\tau(\Psitn{n}) \|_{H^\alpha} + \| \Phit^\tau(\Psitn{n}) - \Phit^\tau(\Psin{n}) \|_{H^\alpha}. 
	\end{equation}
	We shall show that, when $\tau<\tau_0$ with $\tau_0>0$ sufficiently small, for $1 \leq n \leq T/\tau-1$,  
	\begin{equation}\label{eq:assumption}
		\| \En{n} \|_{L^2} \lesssim \tau, \quad \| \En{n} \|_{H^\frac{7}{4}} \lesssim \tau^\frac{1}{8}.  
	\end{equation}
	We shall use an induction argument to prove it. Note that \cref{eq:assumption} holds for $n=1$ by \cref{prop:fist_step_error}. We assume that \cref{eq:assumption} holds for $1 \leq n \leq m \leq T/\tau-1$. To proceed, choosing $\alpha=0$ and $\alpha=7/4$ in \cref{eq:error_eq_1} and using \cref{prop:local_error_first,prop:stability}, we get
	\begin{align}
		&\| \En{n+1} \|_{L^2} \leq (1+C_0\tau) \| \En{n} \|_{L^2}+ C_1 \tau^2, \label{induction:L2}\\
		&\| \En{n+1} \|_{H^\frac{7}{4}} \leq \| \En{n} \|_{H^\frac{7}{4}} + C_0 \tau^\frac{1}{8} \| \En{n} \|_{L^2} + C_1 \tau^{1+\frac{1}{8}}, \label{induction:Halpha}
	\end{align}
	where, under the given regularity assumptions and the assumption for the induction, by the Sobolev embedding $H^\frac{7}{4} \hookrightarrow L^\infty$, $C_0$ and $C_1$ are uniformly bounded. Applying discrete Gronwall's inequality to \cref{induction:L2}, we have $\| \En{m+1} \|_{L^2} \lesssim \tau$. Summing over $n$ from $1$ to $m$ in \cref{induction:Halpha} yields, by the assumption for the induction, $\| \En{m+1} \|_{H^\frac{7}{4}} \lesssim \tau^\frac{1}{8}$. Then we prove \cref{eq:assumption} for $n=m+1$, and thus for all $1 \leq n \leq T/\tau$ by the induction. The rest of the proof follows immediately. More details can be found in \cite{bao2023_EWI}. 
\end{proof}



\subsection{Proof of Theorem \ref{thm:second_order_semi} for ``good" potential and nonlinearity}
In this subsection, we shall prove an optimal second-order error bound for the sEWI under the assumptions \cref{eq:A2}. 

We first recall some additional estimates for the operator $B$, which will be frequently used in the proof. 
\begin{lemma}[Lemma 4.2 in \cite{bao2023_semi_smooth}]\label{lem:diff_B1}
	Under the assumptions that $ V \in W^{1, 4}(\Omega) $ and $ \sigma \geq 1/2 $, for any $ v, w \in H^2(\Omega) $ such that $ \| v \|_{H^2} \leq M $, $ \| w \|_{H^2} \leq M $, we have
	\begin{equation*}
		\| B(v) - B(w) \|_{H^1} \leq C(\| V \|_{W^{1, 4}}, M) \| v - w \|_{H^1}. 
	\end{equation*}
\end{lemma}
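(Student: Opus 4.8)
The plan is to reduce the $H^1$-Lipschitz bound for $B$ to two pieces: the potential term $Vv - Vw$ and the nonlinear term $f(|v|^2)v - f(|w|^2)w$. For the potential piece, I would differentiate: $\partial_x(V(v-w)) = V'(v-w) + V\partial_x(v-w)$, and estimate $\|V'(v-w)\|_{L^2} \leq \|V'\|_{L^4}\|v-w\|_{L^4} \lesssim \|V\|_{W^{1,4}}\|v-w\|_{H^1}$ using H\"older and the embedding $H^1(\Omega)\hookrightarrow L^4(\Omega)$, while $\|V\partial_x(v-w)\|_{L^2} \leq \|V\|_{L^\infty}\|v-w\|_{H^1}$; here $\|V\|_{L^\infty} \lesssim \|V\|_{W^{1,4}}$ in 1D (and this is why the statement is phrased with $W^{1,4}$). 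The $L^2$ part $\|V(v-w)\|_{L^2} \leq \|V\|_{L^\infty}\|v-w\|_{L^2}$ is immediate. So the potential contribution is clean.

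For the nonlinear term, write $f(|v|^2)v - f(|w|^2)w = \int_0^1 \frac{d}{ds}\big[f(|v_s|^2)v_s\big]\,ds$ along the segment $v_s = w + s(v-w)$, so that the difference equals $\int_0^1 dB_{\mathrm{nl}}(v_s)[v-w]\,ds$ where $dB_{\mathrm{nl}}$ is the nonlinear part of the G\^ateaux derivative in \cref{eq:dB_def}, namely $-i[f(|v_s|^2)w + f'(|v_s|^2)|v_s|^2 w + G(v_s)\overline{w}]$ with $w$ replaced by $v-w$. I would then bound the $H^1$-norm of this integrand uniformly in $s$, using that $\|v_s\|_{H^2} \leq M$. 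Each coefficient function is of the form $c|v_s|^{2\sigma}$ or $c|v_s|^{2\sigma-2}v_s^2$; since $H^2(\Omega)\hookrightarrow L^\infty(\Omega)\cap W^{1,\infty}$-type control is available in 1D, I need $\|\,|v_s|^{2\sigma}\,\|_{L^\infty}$ and $\|\partial_x |v_s|^{2\sigma}\|_{L^4}$ (or $L^\infty$) bounded by $C(M)$, the latter producing a factor $|v_s|^{2\sigma-1}|\partial_x v_s|$. Then $\partial_x$ of the product (coefficient times $(v-w)$) distributes into (coefficient)$\cdot\partial_x(v-w)$, controlled in $L^2$ by $C(M)\|v-w\|_{H^1}$, plus $(\partial_x\,\text{coefficient})\cdot(v-w)$, controlled by $\|\partial_x\text{coeff}\|_{L^4}\|v-w\|_{L^4}\lesssim C(M)\|v-w\|_{H^1}$ again via $H^1\hookrightarrow L^4$.

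\textbf{Main obstacle.} The delicate point is the regularity of $\rho\mapsto \rho^\sigma$ near $\rho = 0$ when $1/2 \leq \sigma < 1$, which is exactly why the hypothesis is $\sigma\geq 1/2$ rather than $\sigma>0$. When differentiating $|v_s|^{2\sigma}$ one gets a factor $|v_s|^{2\sigma-2}$ which is singular, multiplied by $\bar v_s \partial_x v_s$ (or $v_s\partial_x\bar v_s$), giving $|v_s|^{2\sigma-1}|\partial_x v_s|$; the exponent $2\sigma - 1 \geq 0$ is what saves integrability, and one must argue that $G$ and these coefficient functions are genuinely (weakly) differentiable with the claimed derivative despite the non-smoothness at $v_s = 0$ — this requires a careful chain-rule / approximation argument (e.g. Stampacchia-type, or mollifying and passing to the limit) rather than naive differentiation. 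I expect this to be the technical heart; everything else is H\"older's inequality plus the two fixed Sobolev embeddings $H^1\hookrightarrow L^4$ and $H^2\hookrightarrow L^\infty$. Since the statement is quoted verbatim as Lemma 4.2 of \cite{bao2023_semi_smooth}, I would in practice cite that reference for the chain-rule justification and only reproduce the H\"older estimates.
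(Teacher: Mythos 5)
The paper does not prove this lemma at all — it is quoted verbatim as Lemma 4.2 of \cite{bao2023_semi_smooth} and used as a black box — so there is no internal proof to compare against. Your sketch is the standard (and correct) argument, and it matches the one in the cited reference: split off the potential term and handle it with H\"older, $H^1\hookrightarrow L^4$ and $W^{1,4}\hookrightarrow L^\infty$; treat the nonlinear term via the fundamental theorem of calculus along the segment $v_s=(1-s)w+sv$ (noting $\|v_s\|_{H^2}\le M$ by convexity); and observe that differentiating the coefficient $|v_s|^{2\sigma}$ (resp.\ $G(v_s)$) produces a factor bounded by $|v_s|^{2\sigma-1}|\partial_x v_s|$, which is exactly where $\sigma\ge 1/2$ enters, with the chain rule for the merely locally Lipschitz map $z\mapsto|z|^{2\sigma}$ justified by mollification/Stampacchia as you indicate.
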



\begin{lemma}[Lemma 4.3 in \cite{bao2023_improved}]\label{lem:diff_dB}
	Under the assumptions $ V \in L^\infty(\Omega) $ and $ \sigma \geq 1/2 $, for any $ v_j, w_j \in L^\infty(\Omega) $ satisfying $ \| v_j \|_{L^\infty} \leq M $ and $ \| w_j \|_{L^\infty} \leq M $ with $ j = 1, 2 $, we have
	\begin{equation*}
		\| dB(v_1)[w_1] - dB(v_2)[w_2] \|_{L^2} \leq C(\| V \|_{L^\infty}, M) \left ( \| v_1 - v_2 \|_{L^2} + \| w_1 - w_2 \|_{L^2} \right ). 
	\end{equation*}
\end{lemma}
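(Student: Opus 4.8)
The plan is to isolate the dependence on each of the two slots. Using that $w\mapsto dB(v)[w]$ is $\R$-linear (evident from \cref{eq:dB_def}, keeping in mind that the $G(v)\overline w$ term makes it $\R$-linear rather than $\C$-linear), I would write
\[
dB(v_1)[w_1]-dB(v_2)[w_2]=dB(v_1)[w_1-w_2]+\big(dB(v_1)[w_2]-dB(v_2)[w_2]\big).
\]
The first term is handled at once by \cref{lem:dB1}: since $\|v_1\|_{L^\infty}\le M$ and $w_1-w_2\in L^2(\Omega)$ (indeed $L^\infty$), it contributes at most $C(\|V\|_{L^\infty},M)\|w_1-w_2\|_{L^2}$.

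For the second term I would substitute the explicit formula. With $f(\rho)=\beta\rho^\sigma$ and $G$ as in \cref{eq:G_def}, the expression \cref{eq:dB_def} becomes $dB(v)[w]=-i\big[Vw+\beta(1+\sigma)|v|^{2\sigma}w+\beta\sigma|v|^{2\sigma-2}v^2\overline w\big]$, so the potential contribution cancels and
\[
dB(v_1)[w_2]-dB(v_2)[w_2]=-i\Big[\beta(1+\sigma)\big(|v_1|^{2\sigma}-|v_2|^{2\sigma}\big)w_2+\beta\sigma\big(|v_1|^{2\sigma-2}v_1^2-|v_2|^{2\sigma-2}v_2^2\big)\overline{w_2}\Big].
\]
Since $\|w_2\|_{L^\infty}\le M$, it then remains to prove the pointwise Lipschitz bounds, valid for $\sigma\ge 1/2$,
\[
\big||z_1|^{2\sigma}-|z_2|^{2\sigma}\big|\le C(\sigma,M)|z_1-z_2|,\qquad\big||z_1|^{2\sigma-2}z_1^2-|z_2|^{2\sigma-2}z_2^2\big|\le C(\sigma,M)|z_1-z_2|,
\]
for all $z_1,z_2\in\C$ with $|z_1|,|z_2|\le M$; substituting $z_j=v_j(x)$, taking $L^2$-norms in $x$, and pulling $w_2$ out in $L^\infty$ then yields $\|dB(v_1)[w_2]-dB(v_2)[w_2]\|_{L^2}\le C(\sigma,\beta,M)\|v_1-v_2\|_{L^2}$.

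The heart of the matter — and the sole place the assumption $\sigma\ge 1/2$ is used — is these two scalar inequalities. For the first, writing $|z|^{2\sigma}=\phi(|z|)$ with $\phi(r)=r^{2\sigma}$, the derivative $\phi'(r)=2\sigma r^{2\sigma-1}$ is nondecreasing on $[0,\infty)$ precisely when $2\sigma-1\ge 0$, so $|\phi'|\le 2\sigma M^{2\sigma-1}$ on $[0,M]$, and combined with $\big||z_1|-|z_2|\big|\le|z_1-z_2|$ this gives the bound. For the second, set $g(z):=|z|^{2\sigma-2}z^2$ with $g(0):=0$ (legitimate since $\sigma>0$, cf.\ \cref{eq:G_def}); a direct computation (e.g.\ in polar coordinates $z=re^{i\theta}$, where $g=r^{2\sigma}e^{2i\theta}$) shows $g\in C^1(\C\setminus\{0\})$ with $|\nabla g(z)|\le C(\sigma)|z|^{2\sigma-1}$, hence $|\nabla g|\le C(\sigma)M^{2\sigma-1}$ on $\{0<|z|\le M\}$ when $\sigma\ge 1/2$; joining $z_1,z_2$ by a rectifiable path of length $\lesssim|z_1-z_2|$ staying in $\{0<|z|\le M\}$ (inserting a short circular arc around the origin in the degenerate case where the segment passes through $0$) and integrating $\nabla g$ along it gives the Lipschitz estimate. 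Collecting the two pieces and combining with the bound on $dB(v_1)[w_1-w_2]$ by the triangle inequality completes the proof. The one subtlety requiring care is the behaviour at the origin: for $\sigma\in[1/2,1)$ these maps are merely Lipschitz, not $C^1$, at $0$, so one must argue through the uniform gradient bound on the punctured ball together with continuity rather than invoke a global smoothness statement.
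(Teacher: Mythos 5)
The paper does not actually prove this lemma: it is imported verbatim as Lemma 4.3 of \cite{bao2023_improved}, so there is no in-paper argument to compare against. Your proof is correct and self-contained. The decomposition $dB(v_1)[w_1]-dB(v_2)[w_2]=dB(v_1)[w_1-w_2]+(dB(v_1)[w_2]-dB(v_2)[w_2])$, exploiting $\R$-linearity in the second slot, is the natural route; the first piece is exactly \cref{lem:dB1}, and for the second piece your explicit formula is right (note $f(|v|^2)+f'(|v|^2)|v|^2=\beta(1+\sigma)|v|^{2\sigma}$ and the $V$-term cancels). The two scalar Lipschitz bounds are indeed the crux and the only place $\sigma\geq 1/2$ enters: for $\phi(r)=r^{2\sigma}$ the relevant point is simply that $|\phi'|\leq 2\sigma M^{2\sigma-1}$ on $[0,M]$ once $2\sigma-1\geq 0$ (monotonicity of $\phi'$ is incidental), and for $g(z)=|z|^{2\sigma-2}z^2=r^{2\sigma}e^{2i\theta}$ your polar computation gives $|\nabla g|\lesssim r^{2\sigma-1}\leq M^{2\sigma-1}$ on the punctured ball. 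Your handling of the origin — a uniform gradient bound on $\{0<|z|\leq M\}$ plus a path/continuity argument rather than a false global $C^1$ claim — is exactly the care needed for $\sigma\in[1/2,1)$; alternatively one can dispose of the degenerate case directly via $|g(z)|=|z|^{2\sigma}\leq M^{2\sigma-1}|z-0|$. Pulling $w_2$ out in $L^\infty$ and collecting terms gives the stated bound with a constant depending additionally on $\sigma$ and $\beta$, which is consistent with the paper's conventions.
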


\begin{lemma}[Lemma 4.4 in \cite{bao2023_improved}]\label{lem:dB_H2}
	Under the assumptions $ V \in H^2(\Omega) $ and $ \sigma \geq 1 $, for any $ v, w \in H^2(\Omega) $ satisfying $ \| v \|_{H^2} \leq M $, $ \| w \|_{H^2} \leq M $, we have
	\begin{equation*}
		\| dB(v)[w] \|_{H^2} \leq C(\| V \|_{H^2}, M). 
	\end{equation*}
\end{lemma}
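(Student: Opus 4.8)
The plan is to work directly from the explicit formula for the G\^ateaux derivative. Combining \cref{eq:dB_def} with \cref{eq:f} and \cref{eq:G_def}, one has
\[
	dB(v)[w] = -i\bigl[\,Vw + \beta(1+\sigma)|v|^{2\sigma}w + \beta\sigma\,|v|^{2\sigma-2}v^2\,\overline{w}\,\bigr],
\]
so it suffices to bound each of the three summands in $H^2(\Omega)$. For the linear term $Vw$ I would use that $H^2(\Omega)$ is a Banach algebra (a consequence of $H^2(\Omega)\hookrightarrow L^\infty(\Omega)$ together with $H^1(\Omega)\hookrightarrow L^4(\Omega)$, valid in 1D, 2D and 3D), which immediately gives $\| Vw \|_{H^2}\lesssim \|V\|_{H^2}\|w\|_{H^2}\le C(\|V\|_{H^2},M)$.

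For the two nonlinear terms, set $\Psi_1(z)=|z|^{2\sigma}$ and $\Psi_2(z)=|z|^{2\sigma-2}z^2$ (with $\Psi_2(0):=0$), so that the terms are $\Psi_1(v)w$ and $\Psi_2(v)\overline{w}$. The key structural fact — and precisely the point where $\sigma\ge 1$ enters — is that $\Psi_1,\Psi_2\in C^2(\C)$ with $\Psi_j$, $D\Psi_j$, $D^2\Psi_j$ all bounded by a constant $C(R)$ on each disk $\{|z|\le R\}$; near the origin the first and second derivatives behave like $|z|^{2\sigma-1}$ and $|z|^{2\sigma-2}$, which remain bounded exactly because $2\sigma-2\ge 0$. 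Granting this, one differentiates by the chain and product rules,
\[
	\partial^2\bigl(\Psi_j(v)w\bigr)=D^2\Psi_j(v)(\partial v)^2 w + D\Psi_j(v)\,\partial^2 v\,w + 2\,D\Psi_j(v)\,\partial v\,\partial w + \Psi_j(v)\,\partial^2 w,
\]
and estimates each piece in $L^2(\Omega)$: since $\|v\|_{H^2}\le M$ the embedding $H^2(\Omega)\hookrightarrow L^\infty(\Omega)$ bounds $\|v\|_{L^\infty},\|w\|_{L^\infty}$ and hence $\|D^k\Psi_j(v)\|_{L^\infty}\le C(M)$ for $k=0,1,2$, while $H^1(\Omega)\hookrightarrow L^4(\Omega)$ controls $\|\partial v\|_{L^4}^2$ and $\|\partial v\|_{L^4}\|\partial w\|_{L^4}$; the remaining factors $\|\partial^2 v\|_{L^2},\|\partial^2 w\|_{L^2}$ are bounded by $M$. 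The zeroth- and first-order parts of the $H^2$-norm are handled the same way, more easily. This yields $\|\Psi_j(v)w\|_{H^2}\le C(M)$, and the bound for $\Psi_2(v)\overline{w}$ is identical since complex conjugation is an isometry on every $H^k(\Omega)$. Summing the three contributions gives the claim.

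The main obstacle is the $C^2$-regularity of $\Psi_1$ and $\Psi_2$ at the origin for non-integer $\sigma$: one must verify that the second Cartesian derivatives of $|z|^{2\sigma}$ and of $|z|^{2\sigma-2}z^2$ exist and are continuous there, which breaks down once $\sigma<1$. A clean way to organize this is to pass to polar coordinates, writing $\Psi_1=r^{2\sigma}$ and $\Psi_2=r^{2\sigma}e^{2i\theta}$, and to check that the difference-quotient limits defining the second-order derivatives at $0$ vanish precisely when $2\sigma-2\ge 0$; away from the origin all the required bounds are routine. Everything else is a mechanical application of the product rule combined with the two Sobolev embeddings already used throughout the paper.
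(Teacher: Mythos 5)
Your proof is correct. Note that this paper does not actually prove the lemma — it is imported verbatim as Lemma 4.4 of \cite{bao2023_improved} — so there is no in-text argument to compare against; your derivation (explicit formula for $dB(v)[w]$, the Banach-algebra bound for $Vw$, and the product/chain-rule expansion of $\Psi_j(v)w$ estimated via $H^2\hookrightarrow L^\infty$ and $H^1\hookrightarrow L^4$) is the standard route and correctly pinpoints $\sigma\ge 1$ as exactly the threshold making $|z|^{2\sigma}$ and $|z|^{2\sigma-2}z^2$ of class $C^2$ with locally bounded second derivatives, which is the only genuinely delicate point.
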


Then we shall prove \cref{thm:second_order_semi} under the assumptions \cref{eq:A2}. We start with establishing higher order estimates of the local truncation error. 

Define an even analytic function $\vphic$ as
\begin{equation}\label{eq:vphic_def}
	\vphic(\theta) = \frac{\theta\cos(\theta) - \sin(\theta)}{\theta^3}, \qquad \theta \in \R. 
\end{equation}
Note that $\vphic$ defined in \cref{eq:vphic_def} is bounded on $\R$, and thus the operator $\vphic(\tau \Delta)$ is bounded from $H^m_\text{per}(\Omega)$ to $H^m_\text{per}(\Omega)$ for all $m \in \Z^+$ and $\tau > 0$. Moreover, similar to \cref{lem:smoothingofphis}, we have the following. 
\begin{lemma}\label{lem:vphic_bound}
	Let $\phi \in L^2(\Omega)$. For any $ 0 < \tau < 1$, we have
	\begin{equation*}
		\| \vphic(\tau \Delta) \phi \|_{H^{\alpha}} \lesssim \tau^{-\alpha/2} \| \phi \|_{L^2}, \quad 0 \leq \alpha \leq 2. 
	\end{equation*}
\end{lemma}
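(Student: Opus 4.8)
The plan is to work entirely on the Fourier side, mirroring the proof of \cref{lem:smoothingofphis} (Lemma 3.8 in \cite{bao2023_EWI}). Since $\vphic$ is even, the operator $\vphic(\tau\Delta)$ acts as $\widehat{(\vphic(\tau\Delta)\phi)}_l = \vphic(\tau\mu_l^2)\widehat{\phi}_l$ for $l \in \Z$, so by Parseval's identity
\begin{equation*}
	\| \vphic(\tau\Delta)\phi \|_{H^\alpha}^2 = \sum_{l\in\Z} (1+\mu_l^2)^\alpha |\vphic(\tau\mu_l^2)|^2 |\widehat{\phi}_l|^2,
\end{equation*}
and it suffices to establish the uniform symbol bound $(1+\mu_l^2)^{\alpha/2}|\vphic(\tau\mu_l^2)| \lesssim \tau^{-\alpha/2}$ for all $l\in\Z$ and all $0<\tau<1$; inserting this into the sum and using $\|\phi\|_{L^2}^2 = \sum_l |\widehat{\phi}_l|^2$ then gives the claim.

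The first step is to record the elementary pointwise estimate $|\vphic(\theta)| \lesssim \min\{1, \theta^{-2}\}$ on $\R$. Near the origin this follows from the Taylor expansion $\vphic(\theta) = -\tfrac13 + O(\theta^2)$ (so the singularity in \cref{eq:vphic_def} is removable and $\vphic$ is bounded near $0$), while for $|\theta|\geq 1$ the crude bound $|\theta\cos\theta - \sin\theta| \leq |\theta| + 1$ yields $|\vphic(\theta)| \leq 2\theta^{-2}$.

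Next I would split into two regimes. If $\mu_l^2 \leq 1/\tau$, then $(1+\mu_l^2)^{\alpha/2} \leq (2/\tau)^{\alpha/2} \lesssim \tau^{-\alpha/2}$ since $0<\tau<1$, and $|\vphic(\tau\mu_l^2)| \lesssim 1$, so the product is $\lesssim \tau^{-\alpha/2}$. If $\mu_l^2 > 1/\tau$, then $\tau\mu_l^2 > 1$ gives $|\vphic(\tau\mu_l^2)| \lesssim (\tau\mu_l^2)^{-1}$, and $(1+\mu_l^2)^{\alpha/2} \lesssim \mu_l^\alpha$; hence the product is $\lesssim \tau^{-1}\mu_l^{\alpha-2} = \tau^{-1}(\mu_l^2)^{(\alpha-2)/2} \leq \tau^{-1}\tau^{(2-\alpha)/2} = \tau^{-\alpha/2}$, where we used $\alpha - 2 \leq 0$ together with $\mu_l^2 > 1/\tau$. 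This establishes the uniform symbol bound in both cases and finishes the proof.

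I do not expect any genuine obstacle here: the only mildly delicate point is pinning down the pointwise behavior of $\vphic$ (the removable singularity at $\theta=0$ and the $\theta^{-2}$ decay at infinity), after which the argument is exactly the same two-regime case split already used for $\vphis$ in \cref{lem:smoothingofphis}.
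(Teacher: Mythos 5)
Your proof is correct and follows exactly the route the paper intends: the paper omits the details and simply declares the result "similar to \cref{lem:smoothingofphis}" (i.e., the Fourier-symbol argument of Lemma 3.8 in \cite{bao2023_EWI}), which is precisely your two-regime estimate on $(1+\mu_l^2)^{\alpha/2}|\vphic(\tau\mu_l^2)|$ combined with Parseval's identity. The pointwise bound $|\vphic(\theta)|\lesssim\min\{1,\theta^{-2}\}$ and the split at $\mu_l^2 = 1/\tau$ are exactly what is needed, so there is nothing to add.
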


Then we can obtain the following estimate of the local truncation error. 
\begin{proposition}\label{prop:local_error_second_semi}
	Under the assumptions \eqref{eq:A2}, we have
	\begin{equation*}
		\| \Psitn{n+1} - \Phit^\tau(\Psitn{n}) \|_{H^\alpha} \lesssim \tau^{3-\alpha/2}, \quad 0 \leq \alpha \leq 2, \quad 1 \leq n \leq T/\tau -1. 
	\end{equation*}
\end{proposition}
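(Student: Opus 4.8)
The plan is to reduce, exactly as in the proof of \cref{prop:local_error_first}, to estimating the scalar local truncation error $\mcalL{n}$ from \cref{eq:LTE}: since $\Psitn{n+1} - \Phit^\tau(\Psitn{n}) = (\mcalL{n}, 0)^T$, one has $\| \Psitn{n+1} - \Phit^\tau(\Psitn{n}) \|_{H^\alpha} = \| \mcalL{n} \|_{H^\alpha}$, and
\[
\mcalL{n} = -i\int_{-\tau}^\tau e^{i(\tau - s)\Delta}\bigl[B(\psi(t_n + s)) - B(\psi(t_n))\bigr]\rmd s .
\]
Under \cref{eq:A2} the map $s \mapsto B(\psi(t_n+s))$ is $C^1$ into $L^2$ with $\tfrac{d}{ds}B(\psi(t_n+s)) = dB(\psi(t_n+s))[\partial_t\psi(t_n+s)]$, so I would Taylor expand in $s$, writing $w_n := dB(\psi(t_n))[\partial_t\psi(t_n)]$,
\[
B(\psi(t_n+s)) - B(\psi(t_n)) = s\,w_n + R_n(s),\qquad R_n(s) := \int_0^s\bigl(dB(\psi(t_n+r))[\partial_t\psi(t_n+r)] - w_n\bigr)\rmd r .
\]
The odd-in-$s$ piece $s\,w_n$ is the genuine leading term; the idea is that it self-cancels to one order higher once integrated against the oscillatory factor, and that $w_n$ is in fact as regular as the exact solution.

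For the linear piece I would factor $e^{i(\tau-s)\Delta} = e^{i\tau\Delta}e^{-is\Delta}$ and integrate the symbol exactly, using the identity $\int_{-\tau}^\tau s\,e^{-is\Delta}\rmd s = 2i\tau^3\,\Delta\,\vphic(\tau\Delta)$ with $\vphic$ as in \cref{eq:vphic_def} (which follows from $\sin\theta - \theta\cos\theta = -\theta^3\vphic(\theta)$ and the eveness of $\vphic$). Thus the linear contribution to $\mcalL{n}$ equals $2\tau^3\,e^{i\tau\Delta}\,\Delta\vphic(\tau\Delta)\,w_n$. Here \cref{lem:dB_H2} is the crucial input: under \cref{eq:A2} it gives $\|w_n\|_{H^2}\lesssim1$ uniformly in $n$, so the boundedness of $\vphic$ on $\R$ yields an $L^2$-bound $\lesssim\tau^3$, while \cref{lem:vphic_bound} applied with $\alpha=2$ to $\Delta w_n\in L^2$ yields an $H^2$-bound $\lesssim\tau^3\cdot\tau^{-1}=\tau^2$. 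For the remainder $R_n$, \cref{lem:diff_dB} together with the Lipschitz-in-$L^2$ continuity of $\psi$ and of $\partial_t\psi$ (from $\psi\in C^1([0,T];H^2)\cap C^2([0,T];L^2)$, the embedding $H^2\hookrightarrow L^\infty$ supplying the pointwise bounds in the hypotheses of \cref{lem:diff_dB}) gives $\|dB(\psi(t_n+r))[\partial_t\psi(t_n+r)] - w_n\|_{L^2}\lesssim|r|$, hence $\|R_n(s)\|_{L^2}\lesssim s^2$ and $\|\partial_s R_n(s)\|_{L^2}\lesssim|s|$, with $R_n\in C([-\tau,\tau];L^2)\cap W^{1,\infty}([-\tau,\tau];L^2)$. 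The isometry of $e^{i(\tau-s)\Delta}$ then bounds the remainder contribution by $\int_{-\tau}^\tau\|R_n(s)\|_{L^2}\rmd s\lesssim\tau^3$ in $L^2$, and \cref{eq:H2_2} applied with $v=R_n$ bounds it by $4\|R_n\|_{L^\infty([-\tau,\tau];L^2)}+2\tau\|\partial_s R_n\|_{L^\infty([-\tau,\tau];L^2)}\lesssim\tau^2+\tau\cdot\tau=\tau^2$ in $H^2$.

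Adding the two contributions gives $\|\mcalL{n}\|_{L^2}\lesssim\tau^3$ and $\|\mcalL{n}\|_{H^2}\lesssim\tau^2$ uniformly for $1\leq n\leq T/\tau-1$, and the standard interpolation inequality (as at the end of the proof of \cref{prop:local_error_first}) gives $\|\mcalL{n}\|_{H^\alpha}\lesssim\|\mcalL{n}\|_{L^2}^{1-\alpha/2}\|\mcalL{n}\|_{H^2}^{\alpha/2}\lesssim(\tau^3)^{1-\alpha/2}(\tau^2)^{\alpha/2}=\tau^{3-\alpha/2}$ for $0\leq\alpha\leq2$, which is the assertion. I expect the main obstacle to be the $L^2$ estimate of the linear term: one must recognize that the surviving, superficially $O(\tau^2)$, term $s\,w_n$ is odd, that exact integration against $e^{-is\Delta}$ converts it into $\tau^3\,e^{i\tau\Delta}\Delta\vphic(\tau\Delta)w_n$, and that the apparently dangerous operator $\Delta\vphic(\tau\Delta)$ — which loses a factor $\tau^{-1}$ on a generic $L^2$ datum — is harmless precisely because $w_n$ is uniformly bounded in $H^2$. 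This is exactly where the hypotheses $V\in H^2_\text{per}(\Omega)$ and $\sigma\geq1$ enter, through \cref{lem:dB_H2}; without the $H^2$-bound on $w_n$ one would only recover $\tau^2$, i.e.\ first order. The $\alpha=2$ endpoint is then just a matter of spending the smoothing of \cref{lem:vphic_bound} and the estimate \cref{eq:H2_2} on the two extra derivatives.
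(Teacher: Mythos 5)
Your proposal is correct and follows essentially the same route as the paper: the same first-order Taylor expansion of $B(\psi(t_n+s))$ splitting $\mcalL{n}$ into a remainder (handled via \cref{lem:diff_dB}, the isometry of $e^{it\Delta}$ and \cref{eq:H2_2}) and the odd linear term, which the exact integration identity turns into $\tau^3 e^{i\tau\Delta}\Delta\vphic(\tau\Delta)\,dB(\psi(t_n))[\partial_t\psi(t_n)]$ and which is controlled by \cref{lem:dB_H2} together with \cref{lem:vphic_bound}, followed by interpolation. The only differences are cosmetic (a harmless constant factor and interpolating the linear term rather than invoking \cref{lem:vphic_bound} for general $\alpha$ directly), so the argument matches the paper's proof.
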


\begin{proof}
	Recalling \cref{eq:LTE_def}, it suffices to establish the estimate of $\mcalL{n}$. By \cref{eq:LTE}, 
	\begin{align}\label{eq:rn_def}
		\mcalL{n} 
		&= -i\int_{-\tau}^\tau e^{i(\tau - s)\Delta} \left[B(\psi(t_n + s)) - B(\psi(t_n))\right] \rmd s \notag \\
		&= -i \int_{-\tau}^{\tau} e^{i(\tau - s)\Delta} \big[ B(\psi(t_n + s)) - B(\psi(t_n)) - s dB(\psi(t_n))[\partial_t \psi(t_n)] \big] \rmd s \notag \\
		&\quad -i \int_{-\tau}^{\tau} s e^{i(\tau - s)\Delta} \big(dB(\psi(t_n))[\partial_t \psi(t_n)] \big) \rmd s =: r^n_1 + r^n_2.  
	\end{align}
	For $r^n_1$ in \cref{eq:rn_def}, recalling that $\partial_t B(\psi(t)) = dB(\psi(t))[\partial_t \psi(t)]$, we have 
	\begin{align}\label{eq:integrand}
		&B(\psi(t_n + s)) - B(\psi(t_n)) - s dB(\psi(t_n))[\partial_t \psi(t_n)] \notag\\
		&= \int_0^s \big [ \partial_w [B(\psi(t_n+w)] - dB(\psi(t_n))[\partial_t \psi(t_n)] \big] \rmd w \notag \\
		&= \int_0^s \big [ dB(\psi(t_n+w))[\partial_t \psi(t_n + w)] - dB(\psi(t_n))[\partial_t \psi(t_n)] \big] \rmd w. 
	\end{align}
	From \cref{eq:integrand}, using \cref{lem:diff_dB}, we obtain, for $-\tau \leq s \leq \tau$, 
	\begin{align}\label{eq:est_integrand}
		&\| B(\psi(t_n + s)) - B(\psi(t_n)) - s dB(\psi(t_n))[\partial_t \psi(t_n)]  \|_{L^2} \notag\\
		&\leq \left|\int_0^s \| dB(\psi(t_n+w))[\partial_t \psi(t_n + w)] - dB(\psi(t_n))[\partial_t \psi(t_n)] \|_{L^2} \rmd w \right| \notag \\
		&\lesssim \left|\int_0^s \left( \| \psi(t_n+w) - \psi(t_n) \|_{L^2} + \| \partial_t \psi(t_n + w) - \partial_t \psi(t_n) \|_{L^2} \right) \rmd w \right| \notag \\
		&\lesssim s^2 \left( \| \partial_t \psi \|_{L^\infty([t_{n-1}, t_{n+1}]; L^2)} + \| \partial_{tt} \psi \|_{L^\infty([t_{n-1}, t_{n+1}]; L^2)} \right) \lesssim s^2.  
	\end{align}
	Recalling the definition of $r^{n}_1$ in \cref{eq:rn_def}, using \cref{eq:est_integrand} and the isometry property of $e^{it\Delta}$, we get
	\begin{equation}\label{eq:est_r1}
		\| r_1^{n} \|_{L^2} \leq \int_{-\tau}^{\tau} \| B(\psi(t_n + s)) - B(\psi(t_n)) - s dB(\psi(t_n))[\partial_t \psi(t_n)] \|_{L^2} \rmd s \lesssim \tau^3. 
	\end{equation}
	On the other hand, applying \cref{eq:H2_2} to $r^n_1$ in \cref{eq:rn_def}, using the identity $\partial_t B(\psi(t)) = dB(\psi(t))[\partial_t \psi(t)]$, \cref{eq:est_integrand} and \cref{lem:diff_dB}, we have
	\begin{align}\label{eq:est_r1_H2}
		\| \Delta r_1^{n} \|_{L^2} 
		&\lesssim \sup_{-\tau \leq s \leq \tau}\| B(\psi(t_n + s)) - B(\psi(t_n)) - s dB(\psi(t_n))[\partial_t \psi(t_n)]  \|_{L^2} \notag \\
		&\quad + \tau \sup_{-\tau \leq s \leq \tau} \| \partial_t B(\psi(t_n + s)) - dB(\psi(t_n))[\partial_t \psi(t_n)] \|_{L^2} \notag \\
		&\lesssim \tau^2 + \tau \sup_{-\tau \leq s \leq \tau} \| dB(\psi(t_n + s))[\partial_t \psi(t_n + s)] - dB(\psi(t_n))[\partial_t \psi(t_n)] \|_{L^2} \notag \\
		&\lesssim \tau^2 + \tau \left( \|\psi(t_n + s) - \psi(t_n) \|_{L^2} + \|\partial_t \psi(t_n + s) - \partial_t \psi(t_n) \|_{L^2} \right) \lesssim \tau^2. 
	\end{align}
	Combining \cref{eq:est_r1,eq:est_r1_H2}, and using the interpolation inequality, we get
	\begin{equation}\label{eq:rn_1}
		\| r_1^{n} \|_{H^\alpha} \lesssim \tau^{3-\alpha/2}, \quad 0 \leq \alpha \leq 2. 
	\end{equation}
	Then we shall estimate $r^n_2$ in \cref{eq:rn_def}. Using the identity $e^{it\Delta}  = \cos(t\Delta) + i \sin(t\Delta)$ and the symmetry of the integral domain, we have 
	\begin{align}\label{eq:vphic}
		-i \int_{-\tau}^\tau s e^{-is\Delta} \rmd s 
		&= -i\int_{-\tau}^\tau s \cos(s\Delta) \rmd s - \int_{-\tau}^\tau s \sin(s\Delta) \rmd s \notag \\
		&= -2 \int_0^\tau s\sin(s\Delta) \rmd s	= 2 \tau^3 \Delta \vphic(\tau\Delta), 
	\end{align}
	where $\vphic$ is defined in \cref{eq:vphic_def}. Recalling \cref{eq:rn_def}, using \cref{eq:vphic}, we have
	\begin{align}\label{eq:rn_2}
		r^n_2 
		&= -i \int_{-\tau}^{\tau} s e^{i(\tau - s)\Delta} \big(dB(\psi(t_n))[\partial_t \psi(t_n)] \big) \rmd s \notag \\
		&= -i e^{i\tau\Delta} \int_{-\tau}^{\tau} s e^{- is\Delta} \rmd s \big(dB(\psi(t_n))[\partial_t \psi(t_n)] \big) \notag \\
		&= \tau^3 e^{i\tau\Delta} \Delta \vphic(\tau\Delta) dB(\psi(t_n))[\partial_t \psi(t_n)]. 
	\end{align}
	From \cref{eq:rn_2}, using \cref{lem:dB_H2,lem:vphic_bound}, noting that $dB(\psi(t_n))[\partial_t \psi(t_n)] \in H^2_\text{per}(\Omega)$ and that $\vphic(\tau\Delta)$, $\Delta$ and $e^{i\tau\Delta}$ commute, by the isometry property of $e^{i\tau\Delta} $, we obtain
	\begin{equation}\label{eq:rn_2_est}
		\| r^n_2 \|_{H^\alpha} \lesssim \tau^{-\alpha/2} \tau^3 \| \Delta dB(\psi(t_n))[\partial_t \psi(t_n)] \|_{L^2} \lesssim \tau^{3-\alpha/2}, \quad 0 \leq \alpha \leq 2, 
	\end{equation}
	which, together with \cref{eq:rn_1}, yields from \cref{eq:rn_def} that
	\begin{equation}
		\| \mcalL{n} \|_{H^\alpha} \lesssim \tau^{3-\alpha/2}, \quad 0 \leq \alpha \leq 2, \quad 1 \leq n \leq T/\tau -1, 
	\end{equation}
	which completes the proof. 
\end{proof}

\begin{proposition}[$H^1$-stability estimate]\label{prop:stability_H1_semi}
	Assume that $V \in W^{1, 4}(\Omega) \cap H^1_\text{per}(\Omega) $ and $\sigma \geq 1$. Let $\mathbf v = (v_1, v_0)^T, \mathbf w = (w_1, w_0)^T $ such that $ v_j, w_j \in H_\text{\rm per}^2(\Omega) $ for $j=0, 1$ with $ \| v_1 \|_{H^2} \leq M $ and $\| w_1 \|_{H^2} \leq M$. For $\tau>0$, we have
	\begin{equation*}
		\begin{aligned}
			\left \| \Phit^\tau(\mathbf{v}) - \Phit^\tau(\mathbf{w}) \right \|_{H^1} \leq (1+C(\| V \|_{W^{1, 4}}, M)\tau) \| \mathbf v - \mathbf w \|_{H^1}. 
		\end{aligned}
	\end{equation*}
\end{proposition}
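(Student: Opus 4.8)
The plan is to estimate the difference $\Phit^\tau(\mathbf v) - \Phit^\tau(\mathbf w)$ componentwise using the matrix form \cref{eq:numerical_flow_semi}. The second component of $\Phit^\tau(\mathbf v) - \Phit^\tau(\mathbf w)$ is simply $v_1 - w_1$, so its $H^1$-norm is $\| v_1 - w_1 \|_{H^1} \leq \| \mathbf v - \mathbf w \|_{H^1}$. The first component is $\phit^\tau(v_1, v_0) - \phit^\tau(w_1, w_0) = e^{2i\tau\Delta}(v_0 - w_0) - 2i\tau e^{i\tau\Delta}\vphis(\tau\Delta)\big(B(v_1) - B(w_1)\big)$. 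For the free-Schrödinger part I would use the isometry of $e^{2i\tau\Delta}$ on $H^1_\text{per}(\Omega)$ to get $\| e^{2i\tau\Delta}(v_0 - w_0) \|_{H^1} = \| v_0 - w_0 \|_{H^1}$. For the nonlinear part, since $\vphis$ is bounded on $\R$ with $|\vphis(\theta)| \leq 1$, the operator $\vphis(\tau\Delta)$ is bounded on $H^1_\text{per}(\Omega)$ with operator norm $\leq 1$, and $e^{i\tau\Delta}$ is an isometry on $H^1_\text{per}(\Omega)$; hence $\| 2i\tau e^{i\tau\Delta}\vphis(\tau\Delta)(B(v_1) - B(w_1)) \|_{H^1} \leq 2\tau \| B(v_1) - B(w_1) \|_{H^1}$.

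The key estimate is then to control $\| B(v_1) - B(w_1) \|_{H^1}$, and this is exactly where \cref{lem:diff_B1} enters: under $V \in W^{1,4}(\Omega)$ and $\sigma \geq 1/2$ (both implied by the hypotheses $V \in W^{1,4}(\Omega) \cap H^1_\text{per}(\Omega)$ and $\sigma \geq 1$), and using $\| v_1 \|_{H^2} \leq M$, $\| w_1 \|_{H^2} \leq M$, we get $\| B(v_1) - B(w_1) \|_{H^1} \leq C(\| V \|_{W^{1,4}}, M)\| v_1 - w_1 \|_{H^1}$. Combining the three pieces via the triangle inequality gives
\begin{equation*}
	\| \Phit^\tau(\mathbf v) - \Phit^\tau(\mathbf w) \|_{H^1}^2 \leq \| v_0 - w_0 \|_{H^1}^2 + \big(2C\tau\| v_1 - w_1 \|_{H^1}\big)\cdot(\cdots) + \| v_1 - w_1 \|_{H^1}^2,
\end{equation*}
which, after expanding and bounding $\| v_j - w_j \|_{H^1} \leq \| \mathbf v - \mathbf w \|_{H^1}$, yields $\| \Phit^\tau(\mathbf v) - \Phit^\tau(\mathbf w) \|_{H^1} \leq (1 + C(\| V \|_{W^{1,4}}, M)\tau)\| \mathbf v - \mathbf w \|_{H^1}$ after absorbing the $O(\tau^2)$ term into the linear one (valid for $\tau$ bounded, or simply by the elementary inequality $a^2 + b^2 + 2c\tau ab \leq (1+c\tau)^2(a^2+b^2)$ when $\tau \le$ something, or more cleanly by estimating the first component directly as $\| v_0 - w_0\|_{H^1} + 2C\tau\|v_1-w_1\|_{H^1}$ and noting $\sqrt{x^2 + (y + 2C\tau x)^2} \le (1+2C\tau)\sqrt{x^2+y^2}$).

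I do not anticipate a genuine obstacle here — this is a structural stability estimate of the same flavor as \cref{prop:stability}, but measured in $H^1$ rather than in the scale of $H^\alpha$ with the smoothing gain, and it does not need the smoothing property of $\vphis$ (\cref{lem:smoothingofphis}) since we stay at the same regularity level on both sides. The only point requiring a little care is bookkeeping the constants and confirming that the hypotheses of \cref{lem:diff_B1} are met (the assumption $V \in W^{1,4}(\Omega) \cap H^1_\text{per}(\Omega)$ supplies $V \in W^{1,4}(\Omega)$, and $\sigma \geq 1 \geq 1/2$), together with the clean passage from the $O(\tau)$ perturbation in the first component to the claimed $(1 + C\tau)$ factor in the full $H^1$-norm. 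The uniform bound $\| v_1 \|_{H^2}, \| w_1 \|_{H^2} \leq M$ is used solely to fix the constant in \cref{lem:diff_B1}.
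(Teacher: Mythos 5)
Your proposal is correct and follows essentially the same route as the paper: the paper writes $\Phit^\tau(\mathbf v)-\Phit^\tau(\mathbf w)=\Aop{\tau}(\mathbf v-\mathbf w)+\tau\left(\Hop(v_1)-\Hop(w_1)\right)$, applies the triangle inequality together with \cref{lem:A} and the boundedness of $e^{it\Delta}$ and $\vphis(\tau\Delta)$ to get $\|\mathbf v-\mathbf w\|_{H^1}+2\tau\|B(v_1)-B(w_1)\|_{H^1}$, and concludes with \cref{lem:diff_B1} — which is exactly your componentwise argument packaged at the vector level. Your final Minkowski-type recombination of the two components is just the paper's vector triangle inequality done by hand, so there is no substantive difference.
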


\begin{proof}
	Recalling \cref{eq:numerical_flow_semi}, using \cref{lem:A}, and the boundedness of $e^{it\Delta}$ and $\vphis(\tau\Delta)$, we obtain
	\begin{align}\label{eq:diff_Phit}
		\left \| \Phit^\tau(\mathbf{v}) - \Phit^\tau(\mathbf{w}) \right \|_{H^1} 
		&\leq \| \Aop{\tau} (\mathbf v - \mathbf w) \|_{H^1} + \tau \| \Hop(v_1) - \Hop(w_1) \|_{H^1} \notag \\
		&\leq \| \mathbf v - \mathbf w \|_{H^1} + 2\tau \| B(v_1) - B (w_1) \|_{H^1}. 
	\end{align}
	The conclusion follows from \cref{lem:diff_B1} immediately. 
\end{proof}

\begin{proof}[Proof of Theorem \ref{thm:second_order_semi}]
	Recall that $\en{n} = \psi(t_n) - \psin{n}$ and $\En{n} = (\en{n}, \en{n-1})^T$ in the proof of \cref{thm:first_order_semi}. 
	We start with the estimate of $\En{1} = (\en{1}, 0)^T$. Recalling \cref{eq:approx_neq0,eq:sEWI_scheme}, using the isometry property of $e^{it\Delta}$, \cref{lem:diff_B,lem:diff_B1}, we have
	\begin{equation}\label{eq:est_1_semi}
		\| \En{1} \|_{H^\alpha} = \| \en{1} \|_{H^\alpha} \lesssim \tau^2, \quad \alpha=0, 1. 
	\end{equation}
	From \cref{eq:error_eq_1}, using \cref{prop:stability} with $\alpha =0$, and \cref{prop:local_error_second_semi,prop:stability_H1_semi}, we obtain	\begin{align}\label{eq:error_eq_second_semi_norm}
		\| \En{n+1} \|_{H^\alpha} \leq (1 + C_2 \tau) \| \En{n} \|_{H^\alpha} + C_3 \tau^{3-\alpha/2}, \quad \alpha = 0, 1, 
	\end{align}
	where $C_2$ depends, in particular, on $\| \psin{n} \|_{H^2}$. By the uniform $H^2$-norm bound of $\psin{n}$ established in \cref{thm:first_order_semi} and given regularity assumptions, the constants $C_2$ and $C_3$ in \cref{eq:error_eq_second_semi_norm} are uniformly bounded. From \cref{eq:error_eq_second_semi_norm}, using discrete Gronwall's inequality, noting \cref{eq:est_1_semi}, we can obtain the desired result. 
\end{proof}

\section{Error estimates of the full-discretization \cref{eq:sEWI-FS_scheme}}
In this section, we first extend the error estimates of the semi-discretization \cref{eq:sEWI_scheme} to the full-discretization \cref{eq:sEWI-FS_scheme}. Then we present an improved error estimate which holds only at fully discrete level for certain non-resonant time steps. 

\subsection{Main results}\label{sec:main_2}
Let $\psihn{n} (0 \leq n \leq T/\tau)$ be obtained from the full-discretization sEWI-FS scheme \cref{eq:sEWI-FS_scheme}. We first present the fully discrete counterparts of \cref{thm:second_order_semi,thm:first_order_semi}. 
\begin{theorem}[Optimal $L^2$-norm error bound for ``good" potential and nonlinearity]\label{thm:second_order_full}
	Under the assumptions \cref{eq:A2}, there exits $\tau_0>0, h_0>0$ sufficiently small such that when $0 < \tau < \tau_0$ and $0 < h < h_0$, we have
	\begin{equation*}
		\| \psi(\cdot, t_n) - \psihn{n} \|_{L^2} \lesssim \tau^2 + h^4, \quad \| \psi(\cdot, t_n) - \psihn{n} \|_{H^1} \lesssim \tau^\frac{3}{2} + h^3, \quad 0 \leq  n \leq T/\tau.  
	\end{equation*}
\end{theorem}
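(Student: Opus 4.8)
The plan is to combine the temporal error analysis already developed for the semi-discretization in \cref{thm:second_order_semi} with a separate control of the spatial (Fourier truncation) error, using the fact that the fully discrete scheme \cref{eq:sEWI-FS} is obtained from the semi-discrete scheme \cref{eq:sEWI_scheme} simply by inserting the projection $P_N$ in front of $B$ and replacing the initial data by $P_N\psi_0$. First I would introduce the intermediate quantity $\psi_N^n := P_N\psi(\cdot,t_n)$, or alternatively run the semi-discrete scheme on the truncated data, and split the total error via the triangle inequality into a \emph{projection error} $\|\psi(t_n)-P_N\psi(t_n)\|$ and a \emph{numerical error} $\|P_N\psi(t_n)-\psihn{n}\|$. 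The projection error is classical: since $\psi\in C([0,T];H^4_\mathrm{per})$ under \cref{eq:A2}, standard Fourier projection estimates give $\|\psi(t_n)-P_N\psi(t_n)\|_{L^2}\lesssim h^4\|\psi(t_n)\|_{H^4}\lesssim h^4$ and $\|\psi(t_n)-P_N\psi(t_n)\|_{H^1}\lesssim h^3\|\psi(t_n)\|_{H^4}\lesssim h^3$.

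For the numerical error I would mimic the proof of \cref{thm:second_order_semi} in the matrix/flow formulation, but now for the fully discrete flow $\Phiht^\tau$ acting on $X_N\times X_N$. The key observations are: (i) $\Phiht^\tau = (P_N\otimes P_N)\circ\Phit^\tau$ when restricted to $X_N$-valued inputs, so the stability estimates \cref{prop:stability} and \cref{prop:stability_H1_semi} transfer verbatim to $\Phiht^\tau$ because $P_N$ is bounded on every $H^\alpha_\mathrm{per}$ with norm $1$ and commutes with $e^{it\Delta}$, $\vphis(\tau\Delta)$, $\vphic(\tau\Delta)$; (ii) the local truncation error of the fully discrete scheme measured against the projected exact solution picks up an extra $P_NB(\psi(t_n)) - B(\psi(t_n))$-type defect, which is again $O(h^4)$ in $L^2$ (respectively $O(h^3)$ in $H^1$) provided $B(\psi(t))\in H^4_\mathrm{per}$; this is exactly where the assumptions $V\in H^2_\mathrm{per}$ and $\sigma\ge 1$ are used, together with the algebra property of $H^2\hookrightarrow L^\infty$ in 1D (and the higher Sobolev estimates for the composition $f(|\psi|^2)\psi$), to guarantee $B$ maps the $H^4$-solution into $H^4$. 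I would also need a uniform $H^2$-bound $\|\psihn{n}\|_{H^2}\lesssim 1$ — obtained just as in the semi-discrete case via the fully discrete analogue of \cref{thm:first_order_semi}, which follows from \cref{prop:stability} with $\alpha=7/4$ and the $H^2$-smoothing estimate of $\vphis(\tau\Delta)$ in \cref{lem:smoothingofphis}, both of which survive the projection — so that the constant $C_2$ in the analogue of \cref{eq:error_eq_second_semi_norm} stays bounded.

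With these pieces in place, the recursion for the numerical error $\Ehn{n} := (P_N\psi(t_n)-\psihn{n},\,P_N\psi(t_{n-1})-\psihn{n-1})^T$ reads, for $\alpha=0,1$,
\begin{equation*}
	\|\Ehn{n+1}\|_{H^\alpha} \le (1+C_2\tau)\|\Ehn{n}\|_{H^\alpha} + C_3\tau^{3-\alpha/2} + C_4\tau\, h^{4-\alpha},
\end{equation*}
where the $\tau h^{4-\alpha}$ term collects the per-step projection defect. The first-step error $\Ehn{1}$ is estimated as in \cref{eq:est_1_semi}, with an additional $O(h^{4-\alpha})$ from projecting the initial data. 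Applying the discrete Gronwall inequality over $n\le T/\tau$ and summing the $\tau h^{4-\alpha}$ contributions to $h^{4-\alpha}$, I obtain $\|\Ehn{n}\|_{L^2}\lesssim \tau^2 + h^4$ and $\|\Ehn{n}\|_{H^1}\lesssim \tau^{3/2} + h^3$; combining with the projection error via the triangle inequality gives the claimed bounds. I expect the main obstacle to be the careful verification of the higher-order ($H^4$ and $H^3$) mapping and difference estimates for the nonlinear operator $B$ under only $V\in H^2_\mathrm{per}$ and $\sigma\ge 1$ — i.e., showing that $B$ and the projection defect $(I-P_N)B(\psi)$ behave with the full fourth-order spatial accuracy — since the low regularity of $V$ and the possibly non-smooth power $|\psi|^{2\sigma}$ make the chain-rule/product estimates delicate; everything else is a routine transcription of the semi-discrete argument with $P_N$ inserted.
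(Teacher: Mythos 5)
Your overall skeleton — split via the triangle inequality into the $P_N$-projection error of the exact solution plus the ``numerical'' error $P_N\psi(t_n)-\psihn{n}$, transfer the semi-discrete stability and local-error machinery to $\Phiht^\tau$ using that $P_N$ commutes with $e^{it\Delta}$ and $\vphis(\tau\Delta)$ and is a contraction on every $H^\alpha_{\mathrm{per}}$, import the uniform $H^2$-bound from the fully discrete first-order theorem to control the stability constants, and close with discrete Gronwall — is exactly the paper's route. But there is one concrete flaw in the middle that would derail the execution: you identify the per-step spatial defect as a ``$P_NB(\psi(t_n))-B(\psi(t_n))$-type'' term and assert that controlling it at fourth order requires showing $B(\psi(t))\in H^4_{\mathrm{per}}$. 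That mapping property is not only unnecessary, it is \emph{false} under the hypotheses \cref{eq:A2}: since $V$ is only in $H^2_{\mathrm{per}}$, the product $V\psi$ is at best $H^2$, so $(I-P_N)B(\psi(t_n))$ is only $O(h^2)$ in $L^2$. Had you pursued this, you would either have concluded that stronger assumptions on $V$ are needed or obtained a degraded spatial rate.

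The resolution is that the $(I-P_N)B$ term never actually appears if you set up the consistency comparison correctly. Applying $P_N$ to the Duhamel formula \cref{eq:exact} and subtracting the scheme evaluated at the projected exact solution, the integrand of the defect is $P_N\bigl[B(\psi(t_n+s))-B(P_N\psi(t_n))\bigr]$ — the projection sits in front of \emph{both} terms. Splitting off the temporal part $P_N[B(\psi(t_n+s))-B(\psi(t_n))]$ leaves the spatial defect $P_N[B(\psi(t_n))-B(P_N\psi(t_n))]$, which is controlled by the $L^2$-Lipschitz estimate of $B$ (\cref{lem:diff_B}) applied to $\psi(t_n)-P_N\psi(t_n)$, i.e.\ by the $H^4$-regularity of $\psi$ alone: this gives $O(\tau h^4)$ in $L^2$ per step. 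The $H^1$ (indeed $H^\alpha$) version then follows for free from the inverse inequality because this defect lies in $X_N$ — see \cref{eq:rh_def}--\cref{eq:est_rh} and \cref{cor:local_error_full_second}. With this correction, the ``main obstacle'' you flag (higher-order mapping and chain-rule estimates for $B$ under low-regularity $V$ and non-smooth powers) simply does not arise, and the rest of your recursion and Gronwall argument goes through as you describe.
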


\begin{theorem}[Error bound for low regularity potential and/or nonlinearity]\label{thm:first_order_full}
	Under the assumptions \cref{eq:A1}, there exits $\tau_0>0, h_0>0$ sufficiently small such that when $0 < \tau < \tau_0$ and $0 < h < h_0$, we have
	\begin{equation*}
		\begin{aligned}
			&\| \psi(\cdot, t_n) - \psihn{n} \|_{L^2} \lesssim \tau + h^2, \quad \| \psihn{n} \|_{H^2} \lesssim 1, \\
			&\| \psi(\cdot, t_n) - \psihn{n} \|_{H^1} \lesssim \tau^\frac{1}{2} + h, \qquad 0 \leq n \leq T/\tau.
		\end{aligned}
	\end{equation*}
\end{theorem}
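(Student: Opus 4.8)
The plan is to reduce \cref{thm:first_order_full} to the semi-discrete result \cref{thm:first_order_semi} by transcribing its proof to the finite-dimensional space $X_N$, using that the Fourier projection $P_N$ commutes with $e^{it\Delta}$, $\vphis(\tau\Delta)$ and $\vphi_1(i\tau\Delta)$ and is a contraction on every periodic Sobolev space $H^m_\text{per}(\Omega)$. First I would split
\begin{equation*}
	\psi(t_n) - \psihn{n} = \left(\psi(t_n) - P_N\psi(t_n)\right) + \left(P_N\psi(t_n) - \psihn{n}\right),
\end{equation*}
where the spatial projection error satisfies $\|(I-P_N)\psi(t_n)\|_{H^s} \lesssim h^{2-s}\|\psi(t_n)\|_{H^2}$ for $0 \le s \le 2$ thanks to the $H^2_\text{per}$-regularity of $\psi$ in \cref{eq:A1}, so it remains to bound the discrete error vector $\Ehhn{n} := \left(P_N\psi(t_n) - \psihn{n},\, P_N\psi(t_{n-1}) - \psihn{n-1}\right)^T \in [X_N]^2$. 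Applying $P_N$ to the exact identity \cref{eq:exact}, subtracting the fully discrete scheme \cref{eq:sEWI-FS}, and inserting $\pm B(\psi(t_n))$ and then $\pm B(P_N\psi(t_n))$ into the nonlinear increment, one obtains for $\Ehhn{n}$ a recursion of exactly the same structure as in the proof of \cref{thm:first_order_semi}, driven by three terms: the local truncation error $P_N\mcalL{n}$ with $\mcalL{n}$ as in \cref{eq:LTE}; the data term $-2i\tau e^{i\tau\Delta}\vphis(\tau\Delta)P_N[B(\psi(t_n)) - B(P_N\psi(t_n))]$; and the Lipschitz increment $-2i\tau e^{i\tau\Delta}\vphis(\tau\Delta)P_N[B(P_N\psi(t_n)) - B(\psihn{n})]$.

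Next I would collect the fully discrete counterparts of the ingredients used at the semi-discrete level. \cref{lem:A,lem:smoothingofphis} hold verbatim on $X_N$, and $\|P_N\mcalL{n}\|_{H^\alpha} \lesssim \tau^{2-\alpha/2}$ for $0 \le \alpha \le 2$ by \cref{prop:local_error_first} and the boundedness of $P_N$. For the data term, \cref{lem:diff_B}, the embedding $H^{7/4}(\Omega)\hookrightarrow L^\infty(\Omega)$ (to bound $\|\psi(t_n)\|_{L^\infty}$ and $\|P_N\psi(t_n)\|_{L^\infty}$), and \cref{lem:smoothingofphis} give
\begin{equation*}
	\left\| 2\tau e^{i\tau\Delta}\vphis(\tau\Delta)P_N[B(\psi(t_n)) - B(P_N\psi(t_n))] \right\|_{H^\alpha} \lesssim \tau^{1-\alpha/2}\,\|(I-P_N)\psi(t_n)\|_{L^2} \lesssim \tau^{1-\alpha/2} h^2 .
\end{equation*}
The fully discrete conditional stability estimate follows by repeating the proof of \cref{prop:stability} with $P_N$ inserted: since $P_N$ commutes with the Fourier multipliers and has operator norm $\le 1$, one gets, for $\mathbf v = (v_1,v_0)^T$, $\mathbf w = (w_1,w_0)^T$ with $v_j, w_j \in X_N$ and $\|v_1\|_{L^\infty}, \|w_1\|_{L^\infty} \le M$, and for $0 \le \alpha \le 2$,
\begin{equation*}
	\left\|\Phiht^\tau(\mathbf v) - \Phiht^\tau(\mathbf w)\right\|_{H^\alpha} \le \|\mathbf v - \mathbf w\|_{H^\alpha} + C(\|V\|_{L^\infty}, M)\,\tau^{1-\alpha/2}\,\|\mathbf v - \mathbf w\|_{L^2},
\end{equation*}
where $\Phiht^\tau$ is the flow of \cref{eq:sEWI-FS}. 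Finally, a first-step estimate $\|\Ehhn{1}\|_{H^\alpha} \lesssim \tau^{2-\alpha/2} + \tau^{1-\alpha/2}h^2$ is obtained as in \cref{prop:fist_step_error}, the extra $h^2$ again coming from $B(\psi_0) - B(P_N\psi_0)$.

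With these in hand I would run the coupled induction of \cref{thm:first_order_semi}. At $\alpha = 0$ the recursion reads $\|\Ehhn{n+1}\|_{L^2} \le (1+C\tau)\|\Ehhn{n}\|_{L^2} + C\tau^2 + C\tau h^2$, and discrete Gronwall yields $\|\Ehhn{n}\|_{L^2} \lesssim \tau + h^2$. At $\alpha = 7/4$ --- the auxiliary norm needed to keep $\|\psihn{n}\|_{L^\infty} \le C(\|\psi(t_n)\|_{H^{7/4}} + \|\Ehhn{n}\|_{H^{7/4}})$ under control --- summing the recursion over $n$ and using the $L^2$-bound just obtained shows $\|\Ehhn{n}\|_{H^{7/4}} \lesssim 1$ for all $n \le T/\tau$, which closes the induction. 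Combining with the projection error gives the stated $L^2$- and $H^1$-bounds. For the uniform $H^2$-bound of the numerical solution I would re-run the recursion at $\alpha = 2$, where the smoothing estimate \cref{lem:smoothingofphis} gives no gain in $\tau$ but the Lipschitz increment and the data term are now bounded, via the already-proven $L^2$-estimate, by $C(\tau + h^2)$ per step; summation over $n \le T/\tau$ then gives $\|\Ehhn{n}\|_{H^2} \lesssim 1$, whence $\|\psihn{n}\|_{H^2} \le \|\psi(t_n)\|_{H^2} + \|\Ehhn{n}\|_{H^2} \lesssim 1$.

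The hard part is, as at the semi-discrete level, that $B$ is only Lipschitz and bounded in $L^2$ \emph{conditionally} on an $L^\infty$-bound of its argument (\cref{lem:diff_B,lem:dB1}): the $L^\infty$-control of $\psihn{n}$ must be propagated simultaneously with the $L^2$-error through the auxiliary $H^{7/4}$-estimate, whose $O(1/\tau)$-many increments each carry a factor $\tau^{1-\alpha/2}$ from the smoothing of $\vphis(\tau\Delta)$ --- and now also a contribution $\tau^{1-\alpha/2}h^2$ from the spatial projection error entering the nonlinearity. Making this coupled induction close, i.e. ensuring the auxiliary bound stays $O(1)$ uniformly in $n \le T/\tau$ once $\tau$ and $h$ are small enough, is the delicate point; everything else is a routine insertion of $P_N$ into the arguments of \cref{thm:first_order_semi}, using its commutation with the relevant Fourier multipliers and its contractivity on periodic Sobolev spaces.
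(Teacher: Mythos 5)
Your overall strategy --- comparing $\psihn{n}$ directly to $P_N\psi(t_n)$ and re-running the coupled induction of \cref{thm:first_order_semi} with an extra projection term in the consistency error --- is genuinely different from the paper's, and it breaks down at the higher-norm estimates. The paper instead proceeds in two stages: it first bounds $\| P_N \Psin{n} - \Psihn{n} \|_{L^2} \lesssim h^2$ (\cref{prop:local_error_first_full}), a quantity whose per-step consistency error is purely $O(\tau h^2)$ with \emph{no} $O(\tau^2)$ temporal contribution, and only then applies the inverse inequalities $\| \phi \|_{H^\alpha} \lesssim h^{-\alpha}\| \phi \|_{L^2}$ and $\|\phi\|_{L^\infty} \lesssim h^{-1/2}\|\phi\|_{L^2}$ on $X_N$ to the \emph{accumulated} error, getting $h^{-\alpha}\cdot h^2 = h^{2-\alpha}$ and $h^{-1/2}\cdot h^2 = h^{3/2}$, both harmless. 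The temporal error is imported wholesale from \cref{thm:first_order_semi} via the triangle inequality and never passes through a smoothing or inverse-inequality loss.

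The concrete gap in your version is the summation step for the auxiliary norms. At $\alpha = 7/4$ your per-step increments are $C\tau^{1/8}\|\Ehhn{n}\|_{L^2} + C\tau^{9/8} + C\tau^{1/8}h^2 \lesssim \tau^{1/8}(\tau + h^2)$, and summing $T/\tau$ of them gives $C(\tau^{1/8} + \tau^{-7/8}h^2)$, not $O(1)$; likewise at $\alpha = 2$ the increments you describe as ``$C(\tau+h^2)$ per step'' sum to $CT(1 + h^2/\tau)$, not $O(1)$. Since \cref{thm:first_order_full} imposes no coupling between $\tau$ and $h$, the terms $\tau^{-7/8}h^2$ and $h^2/\tau$ are unbounded (fix $h$ and let $\tau \to 0$), so the induction hypothesis $\|\psihn{n}\|_{L^\infty} \leq M$ --- which every Lipschitz constant in \cref{lem:diff_B,prop:stability} depends on --- cannot be propagated, and neither can the uniform $H^2$-bound. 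The same obstruction hits the fallback $\|\Ehhn{n}\|_{L^\infty} \lesssim h^{-1/2}\|\Ehhn{n}\|_{L^2} \lesssim \tau h^{-1/2} + h^{3/2}$. The argument can be repaired --- e.g., by taking, for each offending term, the better of the $\vphis(\tau\Delta)$-smoothing bound and the inverse-inequality bound on $X_N$ (one checks $\min(h^2, \tau^2 h^{-2}) \leq \tau$, so the per-step increments become $O(\tau)$ in every regime of $\tau$ versus $h$) --- but that case analysis is exactly the extra idea your write-up is missing, and the paper's two-stage decomposition renders it unnecessary.
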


Recall that time-splitting methods require stronger regularity on potential and nonlinearity than the sEWI at semi-discrete level as discussed in \cref{rem:semi}. Although recent results in \cite{bao2023_improved} indicate that the regularity requirement can be relaxed at fully discrete level for time-splitting methods, however, a time step size restriction still needs to be imposed, which is not required by the sEWI. Moreover, under the same time step size restriction, the regularity requirement of the sEWI can be further relaxed to cover $\sigma \geq 1/2$. These demonstrate the superiority of the sEWI over time-splitting methods.

\begin{theorem}[Improved optimal $L^2$-norm error bound]\label{thm:second_order_improved}
	Under the assumptions \cref{eq:A3}, there exits $\tau_0>0, h_0>0$ sufficiently small such that when $0 < \tau < \tau_0$, $0 < h < h_0$, and $\tau \leq \frac{h^2}{2\pi}$, we have 
	\begin{equation*}
		\| \psi(\cdot, t_n) - \psihn{n} \|_{L^2} \lesssim \tau^2 + h^4, \quad \| \psi(\cdot, t_n) - \psihn{n} \|_{H^1} \lesssim \tau^\frac{3}{2} + h^3, \quad 0 \leq n \leq T/\tau. 
	\end{equation*}
\end{theorem}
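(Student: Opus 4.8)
The plan is to combine the error-cancellation mechanism of \emph{regularity compensation oscillation} (RCO) with the fully discrete stability estimates, so as to recover the $\tau^2$ bound even though $\sigma$ may now be as small as $1/2$ (so that $dB(\psi(t))[\partial_t\psi(t)]$ is no longer in $H^2$, and the argument of \cref{prop:local_error_second_semi} breaks at the level of the term $r_2^n$). First I would revisit the exact identity \cref{eq:exact} together with the Gautschi approximation and isolate the local truncation error $\mcalL{n}$ as in \cref{eq:rn_def}, writing $\mcalL{n} = r_1^n + r_2^n$, where $r_1^n$ is the second-order Taylor remainder and $r_2^n = \tau^3 e^{i\tau\Delta}\Delta\vphic(\tau\Delta)\, dB(\psi(t_n))[\partial_t\psi(t_n)]$. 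The term $r_1^n$ is still controlled exactly as in \cref{eq:est_r1,eq:est_r1_H2} using \cref{lem:diff_dB}, giving $\|r_1^n\|_{H^\alpha}\lesssim\tau^{3-\alpha/2}$ under only $\sigma\ge 1/2$; the whole difficulty is concentrated in $r_2^n$, whose naive bound now costs two derivatives that we cannot afford. Here I would project onto $X_N$ first (this is why the theorem is only fully discrete): on $X_N$ the operator $\Delta$ has symbol $-\mu_l^2$ with $|\mu_l|\le \pi N/(b-a) = \pi/h$, so $\|\Delta P_N v\|_{L^2}\le (\pi/h)^2\|v\|_{L^2}$, and under the stated CFL-type constraint $\tau\le h^2/(2\pi)$ one gets $\tau\|\Delta P_N\|\lesssim \tau/h^2\lesssim 1$; combined with $\|\vphic(\tau\Delta)\|\le C$ this recovers $\|P_N r_2^n\|_{L^2}\lesssim\tau^3$, i.e. the constraint exactly buys back the missing two derivatives in $r_2^n$ without requiring any extra regularity of $\psi$.

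The second and more delicate half of the argument is to establish an $H^1$-stability estimate for the fully discrete flow $\Phiht^\tau$ (the Fourier-spectral analogue of \cref{prop:stability_H1_semi}) that holds when $\sigma\ge 1/2$. The obstacle is that \cref{lem:diff_B1}, which gives the Lipschitz bound $\|B(v)-B(w)\|_{H^1}\le C\|v-w\|_{H^1}$, is fine for $\sigma\ge 1/2$, but the uniform $H^2$-bound of the numerical solution that the constant $C$ depends on is only supplied by \cref{thm:first_order_semi}/\cref{thm:first_order_full} — so I would first invoke \cref{thm:first_order_full} (whose hypotheses \cref{eq:A1} are implied by \cref{eq:A3}) to obtain $\|\psihn{n}\|_{H^2}\lesssim 1$ and $\|\psi(t_n)-\psihn{n}\|_{L^2}\lesssim\tau+h^2$ for free, and then run the second-order refinement on top of this. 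With the uniform $H^2$-bound in hand, the spectral projection $P_N$ is bounded on $H^1_\text{per}(\Omega)$ with operator norm $1$, so $\|\Phiht^\tau(\mathbf v)-\Phiht^\tau(\mathbf w)\|_{H^1}\le \|\mathbf v-\mathbf w\|_{H^1}+2\tau\|P_N(B(v_1)-B(w_1))\|_{H^1}\le(1+C\tau)\|\mathbf v-\mathbf w\|_{H^1}$, exactly mirroring \cref{eq:diff_Phit}; similarly the $L^\infty$-conditional stability of \cref{prop:stability} carries over verbatim with $P_N$ inserted.

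Finally I would assemble everything by the now-standard one-step error recursion. Writing $\Ehn{n} := \Psitn{n}-\Psihn{n}$ and splitting as in \cref{eq:error_eq_1} into local truncation error plus a stability term, the projection error $\psi(t_n)-P_N\psi(t_n)$ contributes $\lesssim h^4$ in $L^2$ and $\lesssim h^3$ in $H^1$ by the standard spectral-approximation estimates and the $H^4$-regularity in \cref{eq:A3}; the temporal local error is $\lesssim\tau^{3-\alpha/2}$ by the refined bound on $\mcalL{n}$ above (using $\tau\le h^2/(2\pi)$), the first-step error $\Ehn{1}$ is $\lesssim\tau^2+h^4$ in $L^2$ and $\lesssim\tau^{3/2}+h^3$ in $H^1$ as in \cref{eq:est_1_semi} plus projection; then discrete Gronwall applied to the $L^2$ recursion $\|\Ehn{n+1}\|_{L^2}\le(1+C\tau)\|\Ehn{n}\|_{L^2}+C(\tau^3+\tau h^4)$ gives $\|\Ehn{n}\|_{L^2}\lesssim\tau^2+h^4$, and summing the $H^1$ recursion (which picks up the already-controlled $L^2$ error on its right-hand side, as in the proof of \cref{thm:first_order_semi}) gives the fractional $H^1$ bound $\tau^{3/2}+h^3$. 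I expect the main obstacle to be the rigorous justification that the CFL condition $\tau\le h^2/(2\pi)$ genuinely substitutes for the two missing derivatives in $r_2^n$ uniformly in $n$ — in particular making sure no hidden factor of $N$ or $1/h$ enters through the repeated application of $e^{2i\tau\Delta}$ over $\mathcal O(1/\tau)$ steps — but since $e^{2i\tau\Delta}$ and $P_N$ are isometries on every $H^\alpha_\text{per}$, this should go through cleanly via \cref{lem:A} exactly as in the semi-discrete case.
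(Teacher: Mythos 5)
Your overall architecture (reduce to the fully discrete local error, split off the problematic term $r_2^n$ containing $dB(\psi(t_n))[\partial_t\psi(t_n)]$, reuse the uniform $H^2$-bound from \cref{thm:first_order_full}, and close with Gronwall) matches the paper's, and your treatment of $r_1^n$ under $\sigma\ge 1/2$ is correct. But the central step --- the claim that the constraint $\tau\le h^2/(2\pi)$ upgrades $\|P_N r_2^n\|_{L^2}$ to $O(\tau^3)$ --- is wrong, and the error is visible in your own arithmetic: from $r_2^n=\tau^3 e^{i\tau\Delta}\Delta\vphic(\tau\Delta)\phi^n$ with $\phi^n=dB(\psi(t_n))[\partial_t\psi(t_n)]\in L^2$ only, the bound $\tau\|\Delta P_N\|\lesssim \tau/h^2\lesssim 1$ applied to $\tau^3\Delta$ yields $\tau^2\cdot O(1)=O(\tau^2)$, not $O(\tau^3)$. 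Indeed, $\tau^3\mu_l^2|\vphic(\tau\mu_l^2)|\le \tau^2\sup_\theta|\theta\vphic(\theta)|\lesssim\tau^2$ already holds \emph{without} any time step restriction (this is exactly \cref{eq:tau_square}), and the CFL condition cannot improve it because $\tau\mu_l^2$ is only bounded by $\pi/2$, an $O(1)$ quantity, on the highest modes. An $O(\tau^2)$ local error summed over $T/\tau$ steps gives only a first-order global bound, i.e.\ no improvement over \cref{thm:first_order_full}. A genuine per-step $O(\tau^3)$ bound would require $\mu_l^2\widehat{\phi^n_l}\in \ell^2$, i.e.\ $\phi^n\in H^2$, which is precisely the $\sigma\ge 1$ case you are trying to avoid.

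What the paper actually does --- and what your proposal is missing --- is a cancellation \emph{across} time steps rather than a per-step gain. After iterating the error recursion, the accumulated dominant error $\mathcal{J}^n=\sum_{k}\A^{n-k}(\mathcal{E}^k_{\rm dom},0)^T$ is treated by diagonalizing $\A_l$ and applying summation by parts in the time index. The geometric sums of the eigenvalues satisfy $|s^+_{l,n}|=|{\textstyle\sum_k\lambda_{+,l}^k}|\lesssim 1/(\tau\mu_l^2)$ precisely because the non-resonance condition $\tau\mu_l^2\le\pi/2$ keeps $|1-e^{-i\tau\mu_l^2}|\gtrsim\tau\mu_l^2$; this factor $1/(\tau\mu_l^2)$ cancels the $\tau^3\mu_l^2$ in $\widehat{(\mathcal{E}^k_{\rm dom})}_l$ and delivers $\tau^2$ for the \emph{whole sum}, with the remaining terms controlled by the time-Lipschitz bound $\|\phi^{k+1}-\phi^k\|_{L^2}\lesssim\tau$ from \cref{lem:diff_dB}. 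This is the actual content of the RCO technique you invoke by name; without it (or an equivalent Abel-summation argument), the proof does not close. A secondary, minor point: the paper obtains the $H^1$ bound simply by the inverse inequality $\|\ehn{n}\|_{H^1}\lesssim h^{-1}\|\ehn{n}\|_{L^2}\lesssim \tau^2h^{-1}+h^3\lesssim\tau^{3/2}+h^3$ using $h^{-1}\lesssim\tau^{-1/2}$, which sidesteps the separate $H^1$ recursion you propose (and which would again require an $H^1$ local error of order $\tau^{5/2}$ that your argument does not supply).
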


\begin{remark}
	According to the numerical results in Section \ref{sec:num} (see also \cite{bao2023_semi_smooth}), when $V \in H^2_\text{per}(\Omega)$ and $\sigma \geq 1/2$, the regularity requirement of the exact solution in \cref{thm:second_order_improved} (i.e., $\psi \in C([0, T]; H^4_\text{\rm per}(\Omega)) \cap C^1([0, T]; H^2(\Omega)) \cap C^2([0, T]; L^2(\Omega))$) is possible for $H^4$-initial datum. Besides, the time step size restriction $ \tau \leq h^2/(2\pi) $ in \cref{thm:second_order_improved} is natural for the balance of temporal and spatial errors. 
\end{remark}


Similar to what we have done in the previous section, we shall start with the proof of \cref{thm:first_order_full} since the assumption of it is more general, and the uniform $H^2$-bound established in it will be useful to prove \cref{thm:second_order_full,thm:second_order_improved}. 

\subsection{Proof of Theorem \ref{thm:first_order_full} for low regularity potential and/or nonlinearity}
In this subsection, we shall prove \cref{thm:first_order_full} under the assumptions \cref{eq:A1}.  

Again, we first rewrite the sEWI-FS scheme \cref{eq:sEWI-FS_scheme} in matrix form. Similar to \cref{eq:phit_def}, we define $\phiht^\tau:(X_N)^2 \rightarrow X_N$ as 
\begin{equation}\label{eq:phiht_def}
	\phiht^\tau(\phi_1, \phi_0) = e^{2i \tau \Delta} \phi_0 - 2i \tau e^{i\tau\Delta} \vphis(\tau\Delta) P_N B(\phi_1), \quad \phi_0, \phi_1 \in X_N, 
\end{equation}
and one has, from \cref{eq:sEWI-FS}, 
\begin{equation}
	\psihn{n+1} = \phiht^\tau(\psihn{n}, \psihn{n-1}), \quad n \geq 1. 
\end{equation} 
Introducing a fully discrete numerical flow $\Phiht^\tau: (X_N)^2 \rightarrow (X_N)^2 $ as
\begin{equation}\label{eq:numerical_flow_full}
	\Phiht^\tau 
	\begin{pmatrix}
		\phi_1 \\
		\phi_0
	\end{pmatrix} = 
	\Aop{\tau}
	\begin{pmatrix}
		\phi_1 \\
		\phi_0
	\end{pmatrix}
	+ \tau P_N \Hop(\phi_1)
	= \begin{pmatrix}
		\phiht^\tau(\phi_1, \phi_0) \\
		\phi_1
	\end{pmatrix}, \quad \phi_0, \phi_1 \in X_N, 
\end{equation}
where $\Aop{\tau}$ and $\Hop$ are defined in \cref{eq:def_A_H}, and $P_N$ is understood as acting on each component. 
Define the fully discrete solution vector $\Psihn{n} (1 \leq n \leq T/\tau) \in (X_N)^2$ as
\begin{equation}\label{eq:Psihn_def}
	\Psihn{n}:=(\psihn{n}, \psihn{n-1})^T, \qquad 1 \leq n \leq T/\tau. 
\end{equation}
Then \eqref{eq:sEWI-FS} can be equivalently written into the matrix form as 
\begin{equation}\label{eq:sEWI-FS_matrix}
	\begin{aligned}
		\Psihn{n+1} 
		&= \Phiht^\tau\left(\Psihn{n}\right)
		=  
		\Aop{\tau}
		\begin{pmatrix}
			\psihn{n} \\
			\psihn{n-1}
		\end{pmatrix}
		+ \tau P_N\Hop(\psihn{n}), \quad n \geq 1 \\
		\Psihn{1}
		&= 
		\begin{pmatrix}
			\psihn{1} \\
			\psihn{0}
		\end{pmatrix}
		=	
		\begin{pmatrix}
			e^{i \tau \Delta}P_N\psi_0 - i \tau \vphi_1(i \tau \Delta) P_N B(P_N \psi_0) \\
			P_N \psi_0
		\end{pmatrix}.  
	\end{aligned}
\end{equation}
To prove \cref{thm:first_order_full}, we shall first estimate the error between the semi-discrete solution obtained from the sEWI \cref{eq:sEWI_scheme} and the fully discrete solution obtained from the sEWI-FS \cref{eq:sEWI-FS_scheme}. 

\begin{proposition}\label{prop:local_error_first_full}
	Under the assumptions \cref{eq:A1}, when $0<\tau<1$, we have
	\begin{equation*}
		\| P_N \Psin{n} - \Psihn{n} \|_{L^2} \lesssim h^2, \quad 1 \leq n \leq T/\tau. 
	\end{equation*}
\end{proposition}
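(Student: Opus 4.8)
The plan is to compare the two numerical flows $\Phit^\tau$ (semi-discrete, acting on $[H^2_\text{per}(\Omega)]^2$) and $\Phiht^\tau$ (fully discrete, acting on $(X_N)^2$) step by step, using the projection error of $P_N$ together with the conditional stability estimate already available in \cref{prop:stability}. First I would record the two standard spectral facts under the assumption $\psi \in C([0,T];H^2_\text{per}(\Omega))$: the uniform $H^2$-bound of the semi-discrete solution (from \cref{thm:first_order_semi}, so that $\|\psin{n}\|_{L^\infty} \leq M$ for all $n$ by Sobolev embedding), and the approximation bound $\|(I - P_N)v\|_{L^2} \lesssim h^2 \|v\|_{H^2}$ together with $\|(I-P_N)v\|_{L^2} \lesssim h^s\|v\|_{H^s}$ for $0 \le s \le 2$. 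Since $P_N$ commutes with $e^{it\Delta}$, $\vphis(\tau\Delta)$, and $\vphi_1(i\tau\Delta)$, applying $P_N$ to the semi-discrete scheme \cref{eq:sEWI_matrix} gives $P_N\Psin{n+1} = \Aop{\tau}P_N\Psin{n} + \tau P_N \Hop(\psin{n})$, i.e.\ $P_N\Psin{n+1} = \Phiht^\tau(P_N\Psin{n}) + \tau P_N\big(\Hop(\psin{n}) - \Hop(P_N\psin{n})\big)$, the last term being the only discrepancy coming from the projection inside the nonlinearity.

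Next I would set $\mathbf{d}^n := P_N\Psin{n} - \Psihn{n} \in (X_N)^2$ and subtract the fully discrete scheme \cref{eq:sEWI-FS_matrix} to obtain the recursion
\begin{equation*}
	\mathbf{d}^{n+1} = \Phiht^\tau(P_N\Psin{n}) - \Phiht^\tau(\Psihn{n}) + \tau P_N\big(\Hop(\psin{n}) - \Hop(P_N\psin{n})\big), \quad n \geq 1.
\end{equation*}
For the first difference I would invoke the conditional stability estimate \cref{prop:stability} with $\alpha = 0$ (which holds verbatim on $(X_N)^2$ since $X_N \subset H^2_\text{per}(\Omega)$), giving $\|\Phiht^\tau(P_N\Psin{n}) - \Phiht^\tau(\Psihn{n})\|_{L^2} \leq (1 + C\tau)\|\mathbf{d}^n\|_{L^2}$, provided the first components of both arguments are bounded in $L^\infty$ by some $M$; the bound on $P_N\psin{n}$ follows from the uniform $H^2$-bound of $\psin{n}$ and boundedness of $P_N$ on $H^2$, and the bound on $\psihn{n}$ must be obtained inside the induction (see below). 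For the projection-defect term I would use \cref{eq:diff_H} (Lipschitz bound for $\Hop$) to get $\|\Hop(\psin{n}) - \Hop(P_N\psin{n})\|_{L^2} \lesssim \|(I - P_N)\psin{n}\|_{L^2} \lesssim h^2\|\psin{n}\|_{H^2} \lesssim h^2$, again using the uniform $H^2$-bound. Combined with the first-step estimate $\|\mathbf{d}^1\|_{L^2} = \|P_N\psin{1} - \psihn{1}\|_{L^2} \lesssim h^2$ (from the analogous comparison of the first-step formulas, using $\|(I-P_N)\psi_0\|_{L^2}\lesssim h^2$ and the Lipschitz bound on $B$), the discrete Gronwall inequality yields $\|\mathbf{d}^n\|_{L^2} \lesssim h^2$ for all $1 \leq n \leq T/\tau$.

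The main obstacle is circular: the stability estimate requires an $L^\infty$-bound on $\psihn{n}$, which is not a priori available and must be propagated alongside the $L^2$-error bound. I would handle this by a coupled induction in the same spirit as the proof of \cref{thm:first_order_semi}: assume simultaneously $\|\mathbf{d}^n\|_{L^2} \lesssim h^2$ and, say, $\|\psihn{n}\|_{H^{7/4}} \lesssim 1$ (hence $\|\psihn{n}\|_{L^\infty} \leq M$) for $n \leq m$; then the $L^2$-recursion above propagates the first bound to $n = m+1$ via Gronwall, while for the $H^{7/4}$-bound I would write $\psihn{n} = P_N\psin{n} - \mathbf{d}^n$ (first component) and estimate $\|P_N\psin{n}\|_{H^{7/4}} \lesssim \|\psin{n}\|_{H^2} \lesssim 1$ together with the inverse inequality $\|\mathbf{d}^n\|_{H^{7/4}} \lesssim h^{-7/4}\|\mathbf{d}^n\|_{L^2} \lesssim h^{1/4}$, which is small for $h < h_0$; this closes the induction. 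An alternative, slightly cleaner route avoids the inverse estimate by carrying the $H^{7/4}$-error bound through its own recursion using \cref{prop:stability} with $\alpha = 7/4$ and \cref{lem:smoothingofphis} to absorb the $\tau^{-7/8}$ factor against the $\tau$ in front of $\Hop$; either way the extra regularity is controlled because it is only needed to an $o(1)$ accuracy. Once the $L^\infty$-bound on $\psihn{n}$ is secured the rest is the routine Gronwall argument sketched above.
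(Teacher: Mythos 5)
Your proposal is correct and follows essentially the same route as the paper: apply $P_N$ to the semi-discrete scheme, isolate the projection defect $\tau P_N(\Hop(\psin{n})-\Hop(P_N\psin{n}))\lesssim \tau h^2$, combine with the $L^2$-stability of $\Phiht^\tau$ and discrete Gronwall, and close a coupled induction with an inverse inequality to control the $L^\infty$-norm of $\psihn{n}$. The only cosmetic difference is that the paper uses the direct inverse estimate $\|\phi\|_{L^\infty}\lesssim h^{-1/2}\|\phi\|_{L^2}$ on $X_N$ rather than your $H^{7/4}$ detour, but both close the induction.
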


\begin{proof}
	For the first step, recalling \cref{eq:sEWI_matrix,eq:sEWI-FS_matrix}, by \cref{lem:diff_B}, the boundedness of $\vphis(t\Delta)$ and $P_N$, and the standard projection error estimates of $P_N$, noting that $P_N$ commutes with $e^{it\Delta}$ and $\vphis(\tau\Delta)$, we have  
	\begin{align}\label{eq:first_step_error_full}
		\| P_N \Psin{1} - \Psihn{1} \|_{L^2} 
		&= \tau \| \vphis(\tau\Delta) P_N (B(\psi_0) - B(P_N \psi_0)) \|_{L^2} \leq \tau \| B(\psi_0) - B(P_N \psi_0) \|_{L^2} \notag \\
		&\lesssim \tau \| \psi_0 - P_N \psi_0 \|_{L^2} \lesssim \tau h^2. 
	\end{align}
	Recalling \cref{eq:numerical_flow_semi,eq:numerical_flow_full}, using \cref{eq:diff_H} and the standard projection error estimates, and noting that $\Aop{\tau}$ and $P_N$ commute, we have, for $\mathbf{v} = (v_1, v_0)^T \in [H^2_\text{per}(\Omega)]^2. $
	\begin{align}\label{eq:local_error_full}
		&\| P_N \Phit^\tau(\mathbf{v}) - \Phiht^\tau(P_N \mathbf{v}) \|_{L^2}  \notag \\
		&= \|\Aop{\tau} P_N \mathbf v + \tau P_N \Hop (v_1) - \Aop{\tau} P_N \mathbf v - \tau P_N \Hop(P_N v_1) \|_{L^2} \notag \\
		&= \tau \| P_N \Hop(v_1) - P_N \Hop(P_N v_1) \|_{L^2} \lesssim \tau \| v_1 - P_N v_1 \|_{L^2} \lesssim \tau h^2. 
	\end{align}
	Moreover, for any $\mathbf{v}=(v_1, v_0)^T, \mathbf{w} = (w_1, w_0)^T \in (X_N)^2$ such that $\| v_1 \|_{L^\infty} \leq M$ and $\| w_1 \|_{L^\infty} \leq M$, by using \cref{lem:A} and \cref{eq:diff_H}, and the boundedness of $P_N$, we have 
	\begin{equation}\label{eq:stability_full}
		\| \Phiht^\tau(\mathbf{v}) - \Phiht^\tau(\mathbf{w}) \|_{L^2} \leq (1 + C(\| V \|_{L^\infty}, M) \tau) \| \mathbf{v} - \mathbf{w} \|_{L^2}. 
	\end{equation}
	By the triangle inequality, \cref{eq:local_error_full,eq:stability_full}, noting that $\Psin{n+1} = \Phit^\tau(\Psin{n})$ and $\Psihn{n+1} = \Phiht^\tau(\Psihn{n})$, recalling the uniform $H^2$-bound of $\psin{n}$ in \cref{thm:first_order_semi}, we have
	\begin{align}\label{eq:error_eq_first_order_full}
		&\| P_N \Psin{n+1} - \Psihn{n+1} \|_{L^2} \notag \\
		&\leq \| P_N \Psin{n+1} - \Phiht^\tau(P_N \Psin{n}) \|_{L^2} + \| \Phiht^\tau(P_N \Psin{n}) - \Phiht^\tau(\Psihn{n}) \|_{L^2} \notag \\
		&\leq C_5\tau h^2 + (1+C_4\tau) \| P_N \Psin{n} - \Psihn{n} \|_{L^2}, \quad 1 \leq n \leq T/\tau-1, 
	\end{align}
	where $C_4$ depends, in particular, on $\| \psihn{n} \|_{L^\infty}$. From \cref{eq:error_eq_first_order_full}, using discrete Gronwall's inequality and \cref{eq:first_step_error_full}, along with the induction argument and inverse estimates $\| \phi \|_{L^\infty} \lesssim h^{-1/2} \| \phi \|_{L^2}$ for $\phi \in X_N$ \cite{book_spectral} to control the $L^\infty$-norm of $\Psihn{n}$ (and thus control $C_4$), we can prove the desired result (see Proposition 4.3 in \cite{bao2023_EWI} for details). 
\end{proof}

\begin{proof}[Proof of Theorem \ref{thm:first_order_full}]
	By \cref{prop:local_error_first_full,thm:first_order_semi}, error estimates of $P_N$ and the inverse inequality $\| \phi \|_{H^\alpha} \lesssim h^{-\alpha} \| \phi \|_{L^2}$ for $\phi \in X_N$ \cite{book_spectral}, we get 
	\begin{align}
		\| \psi(\cdot, t_n) - \psihn{n} \|_{H^\alpha} 
		&\leq \| \psi(\cdot, t_n) - \psin{n} \|_{H^\alpha} + \| \psin{n}  - P_N \psin{n} \|_{H^\alpha} + \| P_N \psin{n} - \psihn{n} \|_{H^\alpha} \notag \\
		&\lesssim \tau^{1-\alpha/2} + h^{2-\alpha} + h^{-\alpha} \| P_N \Psin{n} - \Psihn{n} \|_{L^2} \lesssim \tau^{1-\alpha/2} + h^{2-\alpha}, 
	\end{align}
	which proves Theorem \ref{thm:first_order_full} by taking $\alpha =0, 1, 2$. 
\end{proof}

\subsection{Proof of Theorem \ref{thm:second_order_full} for ``good" potential and nonlinearity}
We define the fully discrete counterpart of $\mcalL{n}$ in \cref{eq:LTE_def} as
\begin{equation}\label{eq:En_def}
	\mathcal{E}^n: = P_N \psi(t_{n+1}) - \phiht^\tau(P_N \psi(t_{n}), P_N \psi(t_{n-1})), \quad 1 \leq n \leq T/\tau -1, 
\end{equation}
where $\phiht^\tau$ is defined in \cref{eq:phiht_def}. 
We first separate $\mathcal{E}^n$ into two parts, where the optimal error bound of one part can be obtained under weaker regularity assumptions than the other part. This decomposition will also be useful in the proof of Theorem \ref{thm:second_order_improved} to obtain the improved optimal error bound for non-resonant time steps. 
\begin{proposition}\label{prop:local_error_decomp}
	Under the assumptions that $V \in H^2_\text{\rm per}(\Omega)$, $\sigma \geq 1/2$, and $\psi \in C([0, T]; H^4_\text{\rm per}(\Omega)) \cap C^1([0, T]; H^2(\Omega)) \cap C^2([0, T]; L^2(\Omega))$, we have, for $\mathcal{E}^n$ in \cref{eq:En_def}, 
	\begin{equation}\label{eq:En_decomp}
		\mathcal{E}^n = \mathcal{E}_{\rm dom}^n + \mathcal{E}^n_2, \quad 1 \leq n \leq T/\tau-1, 
	\end{equation}
	where $ \| \mathcal{E}^n_2 \|_{H^\alpha} \lesssim \tau^{3-\alpha/2} + \tau h^{4-\alpha}$ for $0 \leq \alpha \leq 2$ and 
	\begin{equation}\label{eq:En_dom_def}
		\mathcal{E}_{\rm dom}^n = 2\tau^3 \Delta e^{i \tau \Delta} \vphic(\tau \Delta) P_N dB(\psi(t_n))[\partial_t \psi(t_n)], 
	\end{equation}
	with $\vphic:\R \rightarrow \R$ defined in \cref{eq:vphic_def}. 
\end{proposition}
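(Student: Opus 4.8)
The plan is to start from the semi-discrete local truncation error analysis already carried out in the proof of \cref{prop:local_error_second_semi} and add the spatial projection error. First I would insert $P_N$ throughout: since $P_N$ commutes with $e^{it\Delta}$, $\Delta$, $\vphis(\tau\Delta)$ and $\vphic(\tau\Delta)$, and since $\psi(t_{n-1})\in H^4_\text{per}(\Omega)$, the term $e^{2i\tau\Delta}P_N\psi(t_{n-1})$ matches $P_N$ applied to the corresponding exact term. Using \cref{eq:exact} and the definition \cref{eq:phiht_def}, one gets
\begin{equation*}
	\mathcal{E}^n = P_N\mcalL{n} - 2i\tau e^{i\tau\Delta}\vphis(\tau\Delta)\big(P_N B(\psi(t_n)) - P_N B(P_N\psi(t_n))\big),
\end{equation*}
where $\mcalL{n}$ is the semi-discrete truncation error from \cref{eq:LTE}. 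The second term is handled by \cref{lem:diff_B1} (valid since $\sigma\geq 1/2$ and $V\in H^2_\text{per}(\Omega)\hookrightarrow W^{1,4}(\Omega)$, with $\|\psi(t_n)\|_{H^2},\|P_N\psi(t_n)\|_{H^2}\lesssim 1$) together with the standard projection bound $\|\psi(t_n)-P_N\psi(t_n)\|_{H^1}\lesssim h^{3}$ and $\|\cdot\|_{L^2}\lesssim h^4$, giving a contribution bounded by $\tau h^{4-\alpha}$ for $0\leq\alpha\leq 2$ after interpolation and using the $H^1\to H^1$ (and $H^2\to H^2$, via $\vphis$ boundedness) mapping properties; this goes entirely into $\mathcal{E}^n_2$.

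Next I would decompose $P_N\mcalL{n}$ exactly as in \cref{eq:rn_def}, writing $P_N\mcalL{n}=P_N r^n_1 + P_N r^n_2$. For $P_N r^n_1$ I reuse the bounds \cref{eq:rn_1}, i.e. $\|r^n_1\|_{H^\alpha}\lesssim\tau^{3-\alpha/2}$ (these used only \cref{lem:diff_dB}, which requires $\sigma\geq 1/2$, and the $C^1([0,T];H^2)\cap C^2([0,T];L^2)$ regularity — all available here), and boundedness of $P_N$ on $H^\alpha$; this term also goes into $\mathcal{E}^n_2$. For $P_N r^n_2$, formula \cref{eq:rn_2} gives exactly
\begin{equation*}
	P_N r^n_2 = \tau^3 e^{i\tau\Delta}\Delta\vphic(\tau\Delta)\, P_N dB(\psi(t_n))[\partial_t\psi(t_n)],
\end{equation*}
which, after noting $2\tau^3\Delta e^{i\tau\Delta}\vphic(\tau\Delta)P_N dB(\psi(t_n))[\partial_t\psi(t_n)]$ is the stated $\mathcal{E}^n_\text{dom}$ — wait, I must reconcile the factor: \cref{eq:vphic} yields $-i\int_{-\tau}^\tau s e^{-is\Delta}\rmd s = 2\tau^3\Delta\vphic(\tau\Delta)$, so $r^n_2 = e^{i\tau\Delta}\cdot 2\tau^3\Delta\vphic(\tau\Delta)\,dB(\psi(t_n))[\partial_t\psi(t_n)]$, matching \cref{eq:En_dom_def} verbatim once $P_N$ is inserted. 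So I simply \emph{define} $\mathcal{E}^n_\text{dom}:=P_N r^n_2$ and collect everything else into $\mathcal{E}^n_2 := P_N r^n_1 - 2i\tau e^{i\tau\Delta}\vphis(\tau\Delta)(P_N B(\psi(t_n)) - P_N B(P_N\psi(t_n)))$, then verify $\|\mathcal{E}^n_2\|_{H^\alpha}\lesssim\tau^{3-\alpha/2}+\tau h^{4-\alpha}$ by the two estimates above and the triangle inequality.

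The main subtlety — and the reason this is stated separately rather than folded into \cref{prop:local_error_second_semi} — is that the dominant term $\mathcal{E}^n_\text{dom}$ is \emph{not} shown here to be $O(\tau^{3-\alpha/2})$ in general: that would need $dB(\psi(t_n))[\partial_t\psi(t_n)]\in H^2$, i.e. \cref{lem:dB_H2}, which requires $\sigma\geq 1$. Under the weaker hypothesis $\sigma\geq 1/2$ we can only bound $\Delta\vphic(\tau\Delta)$ crudely and $\mathcal{E}^n_\text{dom}$ is merely $O(\tau^2)$ after using \cref{lem:vphic_bound} and the $H^2$-regularity of $\psi,\partial_t\psi$ to control $dB(\psi(t_n))[\partial_t\psi(t_n)]$ in $L^2$ via \cref{lem:diff_dB} — hence the proposition only isolates this term in closed form and defers its treatment (exploiting oscillation/cancellation of $e^{i\tau\Delta}\Delta\vphic(\tau\Delta)P_N$ across resonant frequencies under the non-resonant time-step condition $\tau\leq h^2/(2\pi)$) to the proof of \cref{thm:second_order_improved}. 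Thus I do not estimate $\mathcal{E}^n_\text{dom}$ at all in this proof; I only record its exact form and check that the remainder $\mathcal{E}^n_2$ satisfies the claimed bound. The one place requiring care is confirming $\|P_N r^n_1\|_{H^2}\lesssim\tau^2$ under $\sigma\geq 1/2$ rather than $\sigma\geq 1$ — but inspecting \cref{eq:est_r1_H2} shows it used only \cref{lem:diff_dB} (valid for $\sigma\geq 1/2$) and $L^2$-norms of $\partial_t\psi,\partial_{tt}\psi$, so it transfers without change.
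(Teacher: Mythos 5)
Your proposal follows essentially the same route as the paper's proof: the same three-way split of $\mathcal{E}^n$ into the spatial projection-error term, the Taylor remainder $P_N r^n_1$, and the exactly-identified dominant term $P_N r^n_2=\mathcal{E}^n_{\rm dom}$; the same observation that the bound \cref{eq:rn_1} on $r^n_1$ uses only \cref{lem:diff_dB} and hence survives under $\sigma\ge 1/2$; and a correct reconciliation of the factor $2$ between \cref{eq:rn_2} and \cref{eq:En_dom_def} (the paper indeed drops a factor $2$ in \cref{eq:rn_2} and restores it in \cref{eq:En_dom_def}). The one soft spot is your $H^\alpha$ bound on the projection-error term $-2i\tau e^{i\tau\Delta}\vphis(\tau\Delta)P_N\bigl(B(\psi(t_n))-B(P_N\psi(t_n))\bigr)$ for $1<\alpha\le 2$: interpolating between the $L^2$ bound $\tau h^4$ (from \cref{lem:diff_B}) and the $H^1$ bound $\tau h^3$ (from \cref{lem:diff_B1}) only covers $0\le\alpha\le 1$, and the appeal to ``$H^2\to H^2$ boundedness of $\vphis$'' would additionally require an $H^2$-Lipschitz estimate for $B$ that the paper's lemmas do not provide. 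The paper closes this more simply: this term lies in $X_N$, so the inverse inequality $\|\phi\|_{H^\alpha}\lesssim h^{-\alpha}\|\phi\|_{L^2}$ for $\phi\in X_N$ upgrades the $L^2$ bound $\tau h^4$ directly to $\tau h^{4-\alpha}$ for all $0\le\alpha\le 2$, with no need for \cref{lem:diff_B1} at all. With that substitution your argument is complete and matches the paper's.
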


\begin{proof}
	Applying $P_N $ on both sides of \cref{eq:exact}, noting $P_N$ and $e^{it\Delta}$ commute, we get
	\begin{equation}\label{eq:duhamel_exact_truncate}
		P_N \psi(t_{n+1}) = e^{2i\tau\Delta} P_N \psi(t_{n-1}) -i \int_{-\tau}^{\tau} e^{i(\tau - s)\Delta} P_N B(\psi(t_n + s)) \rmd s. 
	\end{equation}
	Recalling \cref{eq:phiht_def,eq:approximation}, similar to \cref{eq:numeric}, we have
	\begin{align}\label{eq:numerical_solution_full}
		\phiht^\tau(P_N \psi(t_{n}), P_N \psi(t_{n-1})) 
		&= e^{2i\tau\Delta} P_N \psi(t_{n-1}) - 2 i \tau \vphi_s(\tau \Delta) P_N B(P_N \psi(t_{n})) \notag \\
		&= e^{2i\tau\Delta} P_N \psi(t_{n-1}) -i \int_{-\tau}^{\tau} e^{i(\tau - s)\Delta} P_N B(P_N \psi(t_n)) \rmd s. 
	\end{align}
	Subtracting \cref{eq:numerical_solution_full} from \cref{eq:duhamel_exact_truncate}, and recalling \cref{eq:En_def}, we obtain
	\begin{align}\label{eq:local_error_full_rep}
		\mathcal{E}^n 
		&= P_N \psi(t_{n+1}) - \phiht^\tau(P_N \psi(t_{n}), P_N \psi(t_{n-1})) \notag \\
		&= -i \int_{-\tau}^{\tau} e^{i(\tau - s)\Delta} \left[ P_N B(\psi(t_n + s)) - P_N B(P_N \psi(t_n)) \right] \rmd s. 
	\end{align}
	From \cref{eq:local_error_full_rep}, we have
	\begin{align}\label{eq:rh_def}
		\mathcal{E}^n 
		&= -i \int_{-\tau}^{\tau} e^{i(\tau - s)\Delta} \big[ P_N B(\psi(t_n + s)) - P_N B(\psi(t_n)) \big] \rmd s \notag \\
		&\quad -i \int_{-\tau}^{\tau} e^{i(\tau - s)\Delta} \big[ P_N B(\psi(t_n)) - P_N B(P_N \psi(t_n)) \big] \rmd s = : r^n + r_h^n. 
	\end{align}
	By \cref{lem:diff_B}, the boundedness of $e^{it\Delta}$ and $P_N$, and the standard projection error estimates of $P_N$,
	\begin{equation}\label{eq:est_rh}
		\| r_h^n \|_{L^2} \leq \int_{-\tau}^{\tau} \| B(\psi(t_n)) - B(P_N \psi(t_n)) \|_{L^2} \rmd s \lesssim \tau \| \psi(t_n) - P_N \psi(t_n) \|_{L^2} \lesssim \tau h^4. 
	\end{equation}
	Note that the definition of $r_h^n$ in \cref{eq:rh_def} implies $r_h^n \in X_N$. Hence, by the inverse inequality 
	and \cref{eq:est_rh}, we obtain
	\begin{equation}
		\| r_h^n \|_{H^\alpha} \lesssim \tau h^{4-\alpha}, \quad 0 \leq \alpha \leq 2. 
	\end{equation}
	For $r^n$ defined in \cref{eq:rh_def}, recalling \cref{eq:rn_def,eq:En_dom_def,eq:rn_2}, we have
	\begin{equation}
		r^n = P_N \mcalL{n} = P_N r^n_1 + P_N r^n_2 = P_N r^n_1 + \mathcal{E}_\text{dom}^n.  
	\end{equation}
	Moreover, one may check that to obtain the estimate of $r^n_1$ in \cref{eq:rn_1}, it suffices to assume that $\sigma \geq 1/2$ instead of $\sigma \geq 1$. Hence, \cref{eq:rn_1} is also valid here. By \cref{eq:rn_1} and the boundedness of $P_N$, noting that $r^n_1 \in H^2_\text{per}(\Omega)$, we have
	\begin{equation}
		\| P_N r^n_1 \|_{H^\alpha} \leq \| r^n_1 \|_{H^\alpha} \lesssim \tau^{3-\alpha/2}, \quad 0\leq \alpha \leq 2. 
	\end{equation}
	The proof can be completed by taking $\mathcal{E}_2^n = P_N r^n_1 + r^n_h$ and noting that $\mathcal{E}^n = P_N r^n_1 + r^n_h + \mathcal{E}_\text{dom}^n$. 
\end{proof}

As a corollary of \cref{prop:local_error_decomp}, we have
\begin{corollary}[Local truncation error]\label{cor:local_error_full_second}
	Under the assumptions \cref{eq:A2}, we have, for $1 \leq n \leq T/\tau-1$, 
	\begin{equation}
		\| P_N \Psitn{n+1} - \Phiht^\tau( P_N \Psitn{n}) \|_{H^\alpha} \lesssim \tau^{3-\alpha/2} + \tau h^{4-\alpha}, \quad 0 \leq \alpha \leq 2.
	\end{equation}
\end{corollary}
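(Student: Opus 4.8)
\textbf{Proof proposal for Corollary~\ref{cor:local_error_full_second}.}
The plan is to reduce the statement directly to Proposition~\ref{prop:local_error_decomp}, since \cref{eq:A2} is a special case of the hypotheses there (indeed \cref{eq:A2} requires $\sigma \geq 1$, which is stronger than $\sigma \geq 1/2$, and the potential and solution regularity assumptions coincide). First I would observe that, recalling \cref{eq:Psitn_def,eq:numerical_flow_full}, the vector $P_N \Psitn{n+1} - \Phiht^\tau(P_N \Psitn{n})$ has zero second component, so its $H^\alpha$-norm equals $\|\mathcal{E}^n\|_{H^\alpha}$ with $\mathcal{E}^n$ as in \cref{eq:En_def}. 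Thus it suffices to bound $\|\mathcal{E}^n\|_{H^\alpha}$.

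Next I would invoke the decomposition \cref{eq:En_decomp} from Proposition~\ref{prop:local_error_decomp}, writing $\mathcal{E}^n = \mathcal{E}_{\rm dom}^n + \mathcal{E}^n_2$. The term $\mathcal{E}^n_2$ already satisfies $\|\mathcal{E}^n_2\|_{H^\alpha} \lesssim \tau^{3-\alpha/2} + \tau h^{4-\alpha}$ for $0 \leq \alpha \leq 2$ by that proposition, so nothing further is needed for it. For the dominant term $\mathcal{E}_{\rm dom}^n = 2\tau^3 \Delta e^{i\tau\Delta} \vphic(\tau\Delta) P_N dB(\psi(t_n))[\partial_t \psi(t_n)]$, I would estimate it exactly as in \cref{eq:rn_2_est}: since $\vphic(\tau\Delta)$, $\Delta$, and $e^{i\tau\Delta}$ commute, $e^{i\tau\Delta}$ is an isometry on $H^\alpha$, $P_N$ is bounded on $H^\alpha$, and $\vphic(\tau\Delta)$ gains $\tau^{-\alpha/2}$ of regularity from $L^2$ by \cref{lem:vphic_bound}, we get $\|\mathcal{E}_{\rm dom}^n\|_{H^\alpha} \lesssim \tau^{-\alpha/2} \tau^3 \|\Delta\, dB(\psi(t_n))[\partial_t \psi(t_n)]\|_{L^2}$. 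Under \cref{eq:A2} we have $V \in H^2_{\rm per}(\Omega)$, $\sigma \geq 1$, and $\psi(t_n), \partial_t\psi(t_n) \in H^2(\Omega)$ uniformly in $n$, so \cref{lem:dB_H2} gives $\|dB(\psi(t_n))[\partial_t\psi(t_n)]\|_{H^2} \lesssim 1$, hence $\|\mathcal{E}_{\rm dom}^n\|_{H^\alpha} \lesssim \tau^{3-\alpha/2}$. Adding the two bounds yields the claim for $0 \leq \alpha \leq 2$ and $1 \leq n \leq T/\tau - 1$.

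There is essentially no obstacle here: the corollary is a bookkeeping consequence of Proposition~\ref{prop:local_error_decomp} together with the regularity bound \cref{lem:dB_H2} applicable under the stronger hypothesis $\sigma \geq 1$. The only point requiring a line of care is checking that all the constants (in particular the $H^2$-norm bound from \cref{lem:dB_H2}) are uniform in $n$, which follows from the continuity-in-time assumptions $\psi \in C([0,T];H^4_{\rm per}) \cap C^1([0,T];H^2)$ in \cref{eq:A2} on the compact interval $[0,T]$.
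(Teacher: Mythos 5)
Your proposal is correct and follows essentially the same route as the paper: reduce the vector norm to $\|\mathcal{E}^n\|_{H^\alpha}$, apply the decomposition of \cref{prop:local_error_decomp} to handle $\mathcal{E}_2^n$, and bound $\mathcal{E}_{\rm dom}^n$ by the argument of \cref{eq:rn_2_est} using \cref{lem:vphic_bound} and \cref{lem:dB_H2} under the stronger hypothesis $\sigma \geq 1$. The only cosmetic difference is that the paper cites $\mathcal{E}_{\rm dom}^n = P_N r_2^n$ and reuses \cref{eq:rn_2_est} directly rather than redoing the computation.
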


\begin{proof}
	Recalling \cref{eq:Psitn_def,eq:numerical_flow_full,eq:En_def}, we have
	\begin{align}
			&P_N \Psitn{n+1} - \Phiht^\tau(P_N \Psitn{n}) \notag \\ 
			&= 
			\begin{pmatrix}
				P_N \psi(t_{n+1}) - \phiht^\tau(P_N \psi(t_{n+1}), P_N\psi(t_n)) \\
				P_N \psi(t_n) - P_N \psi(t_n)
			\end{pmatrix} =
			\begin{pmatrix}
				\mathcal{E}^n \\
				0
			\end{pmatrix}. \label{eq:LTE_full}  
		\end{align}
	It follows that
	\begin{equation}\label{eq:reduce_En}
		\| P_N \Psitn{n+1} - \Phiht^\tau( P_N \Psitn{n}) \|_{H^\alpha} = \| \mathcal{E}^n \|_{H^\alpha}, \quad 0 \leq \alpha \leq 2. 
	\end{equation}
	By \cref{prop:local_error_decomp}, we have
	\begin{equation}\label{eq:L2_est_En}
		\|  \mathcal{E}^n  \|_{H^\alpha} \lesssim \tau^{3-\alpha/2} + \tau h^{4-\alpha} + \| \mathcal{E}_\text{dom}^n \|_{H^\alpha}. 
	\end{equation}
	Recalling that $\mathcal{E}^n_\text{dom} = P_N r^n_2$, using \cref{eq:rn_2_est} and the boundedness of $P_N$, we obtain, 
	\begin{align}
		\| \mathcal{E}_\text{dom}^n \|_{H^\alpha} \lesssim \| r^n_2 \|_{H^\alpha} \lesssim \tau^{3-\alpha/2}, \quad 0 \leq \alpha \leq 2,  
	\end{align}
	which, plugged into \cref{eq:L2_est_En}, completes the proof. 
\end{proof}

Similar to Proposition \ref{prop:stability_H1_semi}, we have the conditional $H^1$-stability estimate. 
\begin{proposition}[$H^1$-stability estimate]\label{prop:stability_H1}
	Assume that $V \in W^{1, 4}(\Omega) \cap H^1_\text{\rm per}(\Omega)$ and $\sigma \geq 1$. Let $\mathbf v = (v_1, v_0)^T, \mathbf w = (w_1, w_0)^T $ such that $ v_j, w_j \in H_\text{\rm per}^2(\Omega) $ for $j=0, 1$ with $ \| v_1 \|_{H^2} \leq M $ and $\| w_1 \|_{H^2} \leq M$. Then we have
	\begin{equation*}
		\begin{aligned}
			\left \| \Phiht^\tau(\mathbf{v}) - \Phiht^\tau(\mathbf{w}) \right \|_{H^1} \leq (1+C(\| V \|_{W^{1, 4}}, M)\tau) \| \mathbf v - \mathbf w \|_{H^1}. 
		\end{aligned}
	\end{equation*}
\end{proposition}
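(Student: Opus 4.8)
The plan is to follow, essentially verbatim, the argument used for \cref{prop:stability_H1_semi}, the only additional ingredient being that the Fourier projection $P_N$ is a contraction on every periodic Sobolev space and commutes with $e^{i\tau\Delta}$ and $\vphis(\tau\Delta)$. First I would recall from \cref{eq:numerical_flow_full} that $\Phiht^\tau(\mathbf v) = \Aop{\tau}\mathbf v + \tau P_N\Hop(v_1)$, so that by the triangle inequality
\[
	\left\| \Phiht^\tau(\mathbf v) - \Phiht^\tau(\mathbf w) \right\|_{H^1} \le \left\| \Aop{\tau}(\mathbf v - \mathbf w) \right\|_{H^1} + \tau \big\| P_N\big(\Hop(v_1) - \Hop(w_1)\big) \big\|_{H^1}.
\]
By \cref{lem:A} with $\alpha = 1$ the first term on the right-hand side equals $\| \mathbf v - \mathbf w \|_{H^1}$; this is the source of the leading factor $1$ in the claimed bound, and note that $\Aop{\tau}$ contains only the isometry $e^{2i\tau\Delta}$ with no projection, so $P_N$ never enters this term.

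Next I would control the nonlinear contribution. Recalling the definition of $\Hop$ in \cref{eq:def_A_H}, and using that $P_N$, $e^{i\tau\Delta}$ and $\vphis(\tau\Delta)$ all have operator norm at most $1$ on $H^1_\text{per}(\Omega)$ (by Parseval's identity, together with $|\vphis(\theta)| \le 1$), one obtains $\| P_N(\Hop(v_1) - \Hop(w_1)) \|_{H^1} \le 2 \| B(v_1) - B(w_1) \|_{H^1}$, exactly as in \cref{eq:diff_Phit}. It then remains to invoke the $H^1$-Lipschitz estimate of $B$: since $V \in W^{1, 4}(\Omega) \cap H^1_\text{per}(\Omega)$, $\sigma \ge 1 \ge 1/2$, and $\| v_1 \|_{H^2}, \| w_1 \|_{H^2} \le M$, \cref{lem:diff_B1} gives $\| B(v_1) - B(w_1) \|_{H^1} \le C(\| V \|_{W^{1, 4}}, M) \| v_1 - w_1 \|_{H^1} \le C(\| V \|_{W^{1, 4}}, M) \| \mathbf v - \mathbf w \|_{H^1}$. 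Combining the three bounds yields the assertion, with the constant $C(\| V \|_{W^{1, 4}}, M)$ absorbing the harmless factor $2$.

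I do not expect any genuine obstacle here, since the statement is the fully discrete mirror of \cref{prop:stability_H1_semi} and the presence of $P_N$ in the nonlinear term can only improve (never worsen) an $H^1$-bound. The one point that warrants a moment's care is that \cref{lem:diff_B1} requires control of $v_1, w_1$ in $H^2$ — not merely in $H^1$ — because the constant there depends on the $H^2$-norm through Sobolev embedding; accordingly one must use the hypothesis $\| v_1 \|_{H^2}, \| w_1 \|_{H^2} \le M$ rather than an $H^1$-bound when applying it. A secondary bookkeeping remark is that $\Phiht^\tau$ in \cref{prop:stability_H1} is understood through the formula \cref{eq:numerical_flow_full} applied to the larger class $[H^2_\text{per}(\Omega)]^2$, which is unambiguous since $B$ maps $H^2_\text{per}(\Omega)$ into $L^2(\Omega)$ and $P_N$ then projects onto $X_N$.
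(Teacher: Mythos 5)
Your proposal is correct and follows exactly the route the paper intends: the paper gives no separate proof of \cref{prop:stability_H1}, merely noting it is proved "similar to \cref{prop:stability_H1_semi}", and your argument is precisely that proof with the extra (harmless) observation that $P_N$ is a contraction on $H^1_{\rm per}(\Omega)$ and commutes with $e^{i\tau\Delta}$ and $\vphis(\tau\Delta)$. The decomposition via \cref{lem:A} and the application of \cref{lem:diff_B1} under the $H^2$-bounds on $v_1, w_1$ are exactly as in \cref{eq:diff_Phit}.
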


With \cref{cor:local_error_full_second} and \cref{prop:stability_H1}, we can prove \cref{thm:second_order_full} by using the standard argument as in the proof of \cref{thm:second_order_semi}. 
\begin{proof}[Proof of Theorem \ref{thm:second_order_full}]
	Let $\ehn{n} = P_N \psi(t_n) - \psihn{n}$ for $0 \leq n \leq T/\tau$ and let
	\begin{equation}\label{eq:error_mat_def}
		\Ehn{n} := P_N \Psitn{n} - \Psihn{n}
		= (\ehn{n}, \ehn{n-1})^T, \quad 1 \leq n \leq T/\tau. 
	\end{equation}
	By the standard projection error estimates
	\begin{equation}\label{eq:reduce_en}
		\| \psi(t_n) - P_N \psi(t_n) \|_{H^\alpha} \lesssim h^{4-\alpha}, \quad \alpha = 0, 1, \quad 0 \leq n \leq T/\tau, 
	\end{equation}
	we only need to obtain the estimates of $\ehn{n}$. 
	
	We start with $\Ehn{1} = (\ehn{1}, \ehn{0})^T = (\ehn{1}, 0)^T$. Recalling the first equality in \cref{eq:approx_neq0} and applying $P_N$ on both sides, noting that $P_N$ commutes with $e^{it\Delta}$, we have
	\begin{equation}\label{eq:duhamel}
		P_N \psi(t_1) = e^{i\tau\Delta} P_N \psi_0 - i \int_0^\tau e^{i(\tau - s)\Delta} P_N B(\psi(s)) \rmd s. 
	\end{equation}
	Subtracting the second equation in \cref{eq:sEWI-FS} from \cref{eq:duhamel}, we get
	\begin{equation}\label{eq:e1}
		\ehn{1} = P_N \psi(t_1) - \psihn{1} = - i \int_0^\tau e^{i(\tau - s)\Delta} P_N [B(\psi(s)) - B(P_N \psi_0)] \rmd s. 
	\end{equation}	
	From \cref{eq:e1}, using standard projection error estimates, the boundednss of $e^{it\Delta}$ and $P_N$, \cref{lem:diff_B,lem:diff_B1}, we obtain
	\begin{align}\label{eq:est_1}
		&\| \Ehn{1} \|_{H^\alpha} 
		= \| \ehn{1} \|_{H^\alpha} 
		\leq \int_0^\tau \| B(\psi(s)) - B(P_N \psi_0) \|_{H^\alpha} \rmd s \notag \\
		&\leq \int_0^\tau \big(\| B(\psi(s)) - B(\psi_0)\|_{H^\alpha} + \| B(\psi_0) - B(P_N \psi_0) \|_{H^\alpha} \big) \rmd s \notag \\
		&\lesssim \tau^2 + \tau h^{4-\alpha} \leq \tau^{2-\alpha/2} + h^{4-\alpha}, \qquad \alpha = 0 , 1. 
	\end{align}
	Recalling \cref{eq:error_mat_def,eq:sEWI-FS_matrix}, we have, for $1 \leq n \leq T/\tau - 1 $, 
	\begin{align}\label{eq:error_eq_second_full}
		\Ehn{n+1} 
		&= P_N \Psitn{n+1} - 
		\Psihn{n+1}
		= P_N \Psitn{n+1} - 
		\Phiht^\tau \left ( \Psihn{n} \right ) \notag\\
		&= P_N \Psitn{n+1} - 
		\Phiht^\tau \left( P_N \Psitn{n} \right)
		+ \Phiht^\tau \left( P_N \Psitn{n} \right)
		- \Phiht^\tau \left ( \Psihn{n} \right ), 
	\end{align}
	which implies, by using \cref{cor:local_error_full_second}, \cref{eq:stability_full} and \cref{prop:stability_H1}, 
	\begin{align}\label{eq:error_eq_second_full_norm}
		\| \Ehn{n+1} \|_{H^\alpha} 
		&\leq \| P_N \Psitn{n+1} - \Phiht^\tau \left( P_N \Psitn{n} \right) \|_{H^\alpha} + \| \Phiht^\tau \left( P_N \Psitn{n} \right) - \Phiht^\tau \left ( \Psihn{n} \right ) \|_{H^\alpha} \notag \\
		&\leq C_7 \left( \tau^{3-\alpha/2} + \tau h^{4-\alpha} \right) + (1 + C_6 \tau) \| \Ehn{n} \|_{H^\alpha}, \quad \alpha = 0, 1, 
	\end{align}
	where $C_6$ depends, in particular, on $\| \psihn{n} \|_{H^2}$. By the uniform $H^2$ bound of $\psihn{n}$ established in \cref{thm:first_order_full} and the assumptions made in \cref{thm:second_order_full}, the constants $C_6$ and $C_7$ in \cref{eq:error_eq_second_full_norm} are uniformly bounded. From \cref{eq:error_eq_second_full_norm}, using discrete Gronwall's inequality and noting \cref{eq:est_1}, we can obtain the desired result. 
\end{proof}

\subsection{Proof of Theorem \ref{thm:second_order_improved} for improved optimal error bound}
In this subsection, we shall present the proof of \cref{thm:second_order_improved}. The key ingredient is to use the technique of Regularity Compensation Oscillation (RCO) to analyze the error cancellation in the accumulation of local truncation errors (i.e., \cref{eq:Jn_def} below). Note that, under the assumptions \cref{eq:A3},  \cref{prop:local_error_decomp} still holds (while \cref{cor:local_error_full_second} does not). 

\begin{proof}[Proof of \cref{thm:second_order_improved}]
	Recall $\ehn{n} = P_N \psi(t_n) - \psihn{n}$ and $\Ehn{n} = P_N \Psitn{n} - \Psihn{n} = (\ehn{n}, \ehn{n-1})^T $ in the proof of \cref{thm:second_order_full}. Similar to \cref{eq:reduce_en}, we only need to establish the $L^2$- and $H^1$-norm error bounds for $\ehn{n}$. 
	From \cref{eq:error_eq_second_full}, by \cref{eq:LTE_full}, we get
	\begin{align}\label{eq:error_eq}
		\Ehn{n+1} 
		&= P_N \Psitn{n+1} - 
		\Phiht^\tau \left( P_N \Psitn{n} \right)
		+ \Phiht^\tau \left( P_N \Psitn{n} \right)
		- \Phiht^\tau \left ( \Psihn{n} \right ) \notag \\
		&= \A\Ehn{n} + 
		\begin{pmatrix}
			\mathcal{E}^n \\
			0
		\end{pmatrix}
		+ \tau P_N \left (\Hop(P_N \psi(t_n)) - \Hop(\psihn{n}) \right ), \quad 1 \leq n \leq T/\tau - 1, 
	\end{align}
	where we abbreviate $\Aop{\tau}$ as $\A$. 
	Iterating \cref{eq:error_eq} and recalling \cref{eq:En_decomp}, for $1 \leq n \leq T/\tau-1$, we have
	\begin{equation}\label{eq:error_eq_iter}
		\begin{aligned}
			\Ehn{n+1} = \A^n \Ehn{1}  + \sum_{k=1}^{n} \A^{n-k} 
			\begin{pmatrix}
				\mathcal{E}_\text{dom}^{k} \\
				0
			\end{pmatrix}
			+ \sum_{k=1}^{n} \A^{n-k} 				
			\begin{pmatrix}
				\mathcal{E}_2^{k} \\
				0
			\end{pmatrix}
			+ \tau \sum_{k=1}^{n} \A^{n-k} \mathbf Z^{k}, 
		\end{aligned} 
	\end{equation}
	where 
	\begin{equation}\label{eq:Zn_def}
		\mathbf Z^n = P_N \Hop(P_N \psi(t_n)) - P_N \Hop(\psihn{n}), \qquad 1 \leq n \leq T/\tau-1. 
	\end{equation}
	In the following, we shall estimate the four terms on the RHS of \cref{eq:error_eq_iter} in the $L^2$-norm respectively. By \cref{lem:A} and \cref{eq:est_1} with $\alpha=0$, we have 
	\begin{equation}\label{eq:e1_L2}
		\| \A^n \Ehn{1} \|_{L^2} = \| \Ehn{1} \|_{L^2} = \| \ehn{1} \|_{L^2} \lesssim \tau^2 + h^4. 
	\end{equation}
	By \cref{lem:A} and \cref{prop:local_error_decomp} with $\alpha = 0$, we get
	\begin{equation}\label{eq:est_2}
		\left \| \sum_{k=1}^{n} \A^{n-k} 				
		\begin{pmatrix}
			\mathcal{E}_2^{k} \\
			0
		\end{pmatrix} \right \|_{L^2}
		\leq \sum_{k=1}^{n} \| \mathcal{E}_2^{k} \|_{L^2} \lesssim n ( \tau^3 + \tau h^4 ) \lesssim \tau^2+ h^4. 
	\end{equation}
	Recalling \cref{eq:Zn_def}, by \cref{lem:A}, \cref{eq:diff_H} and the boundedness of $\vphis(t \Delta)$ and $P_N$, we have
	\begin{equation}\label{eq:est_3}
		\left \| \tau \sum_{k=1}^{n} \A^{n-k} \mathbf Z^{k} \right \|_{L^2} \leq \tau \sum_{k=1}^{n} \| \mathbf Z^{k} \|_{L^2} \lesssim \tau \sum_{k=1}^{n} \| \ehn{k} \|_{L^2} \leq \tau \sum_{k=1}^{n} \| \Ehn{k} \|_{L^2}, 
	\end{equation}
	where the constant depends on $\| V \|_{L^\infty}$, $\| P_N \psi(t_n) \|_{L^\infty}$ and $\| \psihn{n} \|_{L^\infty}$, and thus is uniformly bounded by the given assumptions and the uniform $H^2$-bound of $\psihn{n}$ in \cref{thm:first_order_full}. 
	
	It remains to estimate the second term on the RHS of \eqref{eq:error_eq_iter}, which is the accumulation of the dominant local truncation error:
	\begin{equation}\label{eq:Jn_def}
		\mathcal{J}^n := \sum_{k=1}^{n} \A^{n-k} 
		\begin{pmatrix}
			\mathcal{E}_\text{dom}^{k} \\
			0
		\end{pmatrix} =
		\sum_{k=0}^{n-1} \A^{k} 
		\begin{pmatrix}
			\mathcal{E}^{n-k}_\text{dom} \\
			0
		\end{pmatrix}, \quad 1 \leq n \leq T/\tau -1. 
	\end{equation}
	Taking the Fourier transform of \cref{eq:Jn_def} componentwise, we obtain
	\begin{equation}
		\widehat{(\mathcal{J}^n)}_l := \sum_{k=0}^{n-1} (\A_l)^{k} 
		\begin{pmatrix}
			\widehat{(\mathcal{E}^{n-k}_\text{dom})}_l \\
			0
		\end{pmatrix}, \quad l \in \mathcal{T}_N, \quad 1 \leq n \leq T/\tau -1, 
	\end{equation}
	where $\mathbf{A}_l$ is a unitary matrix defined as
	\begin{equation}\label{eq:Al_def}
		\A_l := \begin{pmatrix}
			0 & e^{-2i\tau\mu_l^2}\\
			1 & 0
		\end{pmatrix}, \quad l \in \mathcal{T}_N. 
	\end{equation}
	Note that $\mathbf{A}_l$ can be decomposed as
	\begin{equation}
		\A_l = \mathbf P_l \mathbf \Lambda_l \mathbf P_l^{\ast}, \quad l \in \mathcal{T}_N,  
	\end{equation}
	where $\mathbf{P}_l^\ast$ is the complex conjugate transpose of $\mathbf{P}_l$ and 
	\begin{equation}\label{eq:eigen_decomp}
		\mathbf P_l = \frac{1}{\sqrt{2}} \begin{pmatrix}
			\lambda_{+, l} & \lambda_{-, l} \\
			1 & 1
		\end{pmatrix}, \quad 
		\mathbf \Lambda_l = \begin{pmatrix}
			\lambda_{+, l} & 0 \\
			0 & \lambda_{-, l}
		\end{pmatrix}, \quad \lambda_{\pm, l} = \pm e^{- i \tau \mu_l^2}. 
	\end{equation}
	Define
	\begin{equation}\label{eq:S_def}
		\mathbf S_{l, n} := \sum_{k=0}^n (\A_l)^k = \mathbf P_l 
		\begin{pmatrix}
			s_{l, n}^+ & 0 \\
			0 & s_{l, n}^-
		\end{pmatrix} \mathbf P_l^{\ast}, \qquad n \geq 0,  
	\end{equation}
	where
	\begin{equation}\label{eq:sum_eigenvalue}
		s_{l, n}^+ = \sum_{k=0}^n \lambda_{+, l}^k, \quad s_{l, n}^- = \sum_{k=0}^n \lambda_{-, l}^k. 
	\end{equation}
	From \cref{eq:sum_eigenvalue}, recalling \cref{eq:eigen_decomp}, noting that $\tau \leq h^2/(2\pi)$ implies $\tau \mu_l^2 \leq \pi/2$ for all $l \in \mathcal{T}_N$, we have 
	\begin{align}
		&\left|s_{l, n}^+\right| = \left | \frac{1 - \lambda_{+, l}^{n+1}}{1-\lambda_{+, l}} \right | \leq \frac{2}{|1 - e^{-i \tau \mu_l^2}|} = \frac{1}{|\sin(\tau \mu_l^2/2)|} \lesssim \frac{1}{\tau \mu_l^2}, \quad 0 \neq l \in \mathcal{T}_N, \label{eq:sp_est} \\
		&\left|s_{l, n}^-\right| = \left | \frac{1 - \lambda_{-, l}^{n+1}}{1-\lambda_{-, l}} \right | \leq \frac{2}{|1+e^{-i\tau\mu_l^2}|} = \frac{1}{|\cos(\tau \mu_l^2 / 2)|} \lesssim 1, \quad l \in \mathcal{T}_N. \label{eq:sm_est}
	\end{align}
	Using the summation by parts formula, we have, for $ l \in \mathcal{T}_N $ and $ 1 \leq n \leq T/\tau -1 $, 
	\begin{align}\label{eq:J_n_sum_by_part}
		\widehat{(\mathcal{J}^n)}_l 
		&= \sum_{k=0}^{n-1} (\mathbf A_l)^{k} 
		\begin{pmatrix}
			\widehat{(\mathcal{E}^{n-k}_\text{dom})}_l \\
			0
		\end{pmatrix} \notag \\
		&= \mathbf S_{l, n-1} \begin{pmatrix}
			\widehat{(\mathcal{E}^{1}_\text{dom})}_l \\
			0
		\end{pmatrix} - \sum_{k=0}^{n-2} \mathbf S_{l, k} 
		\begin{pmatrix}
			\widehat{(\mathcal{E}^{n-k-1}_\text{dom})}_l  - 	\widehat{(\mathcal{E}^{n-k}_\text{dom})}_l \\
			0
		\end{pmatrix}. 
	\end{align}
	Recalling the definition of $\mathcal{E}^n_\text{dom}$ in \cref{eq:En_dom_def}, setting $\phi^n_l := P_N dB(\psi(t_n))[\partial_t \psi(t_n)]$, we have
	\begin{equation}\label{eq:E_dom_hat}
		\widehat{(\mathcal{E}^n_\text{dom})}_l = -2\tau^3 \mu_l^2 e^{-i \tau \mu_l^2} \vphic(\tau \mu_l^2) \widehat{\phi^n_l}, \quad l \in \mathcal{T}_N, \quad 1 \leq n \leq T/\tau - 1, 
	\end{equation}
	which implies, for $1 \leq n \leq T/\tau-1$, 
	\begin{align}
		&\left |\widehat{(\mathcal{E}^n_\text{dom})}_l \right | \lesssim \tau^3 \sup_{x \in \R} |\vphic(x)| \mu_l^2 \left |\widehat{\phi^n_l} \right | \lesssim \tau^3 \mu_l^2 \left |\widehat{\phi^n_l} \right |, \quad l \in \mathcal{T}_N, \label{eq:tau_cube}\\
		&\left |\widehat{(\mathcal{E}^n_\text{dom})}_l \right | \lesssim \tau^2 \sup_{x \in \R} |x\vphic(x)| \left | \widehat{\phi^n_l} \right | \lesssim \tau^2 \left | \widehat{\phi^n_l} \right |, \qquad l \in \mathcal{T}_N. \label{eq:tau_square}
	\end{align}
	From \cref{eq:J_n_sum_by_part}, recalling \cref{eq:S_def,eq:E_dom_hat}, we have, for $ l \in \mathcal{T}_N $, 
	\begin{align}\label{eq:Jn}
		\widehat{(\mathcal{J}^n)}_l 
		&= \frac{1}{\sqrt{2}} \mathbf{P}_l
		\begin{pmatrix}
			s_{l, n-1}^+ \overline{\lambda_{+, l}} \\
			s_{l, n-1}^- \overline{\lambda_{-, l}}
		\end{pmatrix} \widehat{(\mathcal{E}^1_\text{dom})}_l
		- \frac{1}{\sqrt{2}} \mathbf{P}_l \sum_{k=0}^{n-2}
		\begin{pmatrix}
			s_{l, k}^+ \overline{\lambda_{+, l}} \\
			s_{l, k}^- \overline{\lambda_{-, l}}
		\end{pmatrix} 
		\left[\widehat{(\mathcal{E}^{n-k-1}_\text{dom})}_l  - 	\widehat{(\mathcal{E}^{n-k}_\text{dom})}_l \right].
	\end{align}
	From \cref{eq:Jn}, noting that $\| \mathbf{P}_l \|_{2} = 1$ and $ |\lambda_{\pm, l}| = 1$, we have
	\begin{align}\label{eq:Jn_decomp}
		\left |\widehat{(\mathcal{J}^n)}_l \right | 
		\lesssim W^1_l + W^2_l + \sum_{k=0}^{n-2} \left ( W^3_{k, l} + W^4_{k, l} \right ), \quad l \in \mathcal{T}_N, 
	\end{align}
	where
	\begin{equation}\label{eq:W_def}
		\begin{aligned}
			&W^1_l = \left| \widehat{(\mathcal{E}^1_\text{dom})}_l \right|  \left|s_{l, n-1}^+ \right|, \qquad W^2_l = \left| \widehat{(\mathcal{E}^1_\text{dom})}_l \right| \left|s_{l, n-1}^- \right|,\\ 
			&\begin{aligned}
				&W^3_{k, l} =  \left|\widehat{(\mathcal{E}^{n-k}_\text{dom})}_l  - 	\widehat{(\mathcal{E}^{n-k-1}_\text{dom})}_l \right| \left|s_{l, k}^+ \right|, \\
				&W^4_{k, l} =  \left|\widehat{(\mathcal{E}^{n-k}_\text{dom})}_l  - 	\widehat{(\mathcal{E}^{n-k-1}_\text{dom})}_l \right| \left| s_{l, k}^- \right|, 
			\end{aligned}
			\quad k \in \left\{0, 1, \cdots, \frac{T}{\tau} -3 \right\},  
		\end{aligned}
		\qquad l \in \mathcal{T}_N.  
	\end{equation}
	For $ W^1_l $, using \cref{eq:tau_cube,eq:sp_est}, when $\tau \leq h^2/(2\pi)$, we have
	\begin{equation}\label{eq:W1_1}
		| W^1_l | \lesssim \frac{\tau^3 \mu_l^2 }{\tau \mu_l^2} \left |\widehat{\phi^1_l} \right | \lesssim \tau^2 \left |\widehat{\phi^1_l} \right |, \quad 0 \neq l \in \mathcal{T}_N, 
	\end{equation}
	from which, noting that $|W_l^1| = 0$ when $l=0$, we obtain
	\begin{equation}\label{eq:W1}
		| W^1_l | \lesssim \tau^2 \left |\widehat{\phi^1_l} \right |, \quad l \in \mathcal{T}_N.  
	\end{equation} 
	For $ W^2_l $, using \cref{eq:tau_square,eq:sm_est}, when $\tau \leq h^2/(2\pi)$, we have
	\begin{equation}\label{eq:W2}
		| W^2_l | \lesssim \tau^2 \left |\widehat{\phi^1_l} \right |, \quad l \in \mathcal{T}_N.  
	\end{equation}
	Similarly, for $ W^3_{k, l} $ and $ W^4_{k, l} $, when $\tau \leq h^2/(2\pi)$, we have
	\begin{equation}\label{eq:W34}
		| W^3_{k, l} | \lesssim \tau^2 \left |\widehat{\phi^{n-k}_l} - \widehat{\phi^{n-k-1}_l} \right |, \quad | W^4_{k, l} | \lesssim \tau^2 \left |\widehat{\phi^{n-k}_l} - \widehat{\phi^{n-k-1}_l} \right |, \quad l \in \mathcal{T}_N. 
	\end{equation}
	From \cref{eq:Jn_decomp}, using Cauchy inequality, and noting \cref{eq:W1,eq:W2,eq:W34}, we get, for $\tau \leq h^2/(2\pi)$, 
	\begin{align}\label{eq:Jn_hat_est}
		\left |\widehat{(\mathcal{J}^n)}_l \right |^2 
		&\lesssim \left| W_l^1 \right|^2 + \left| W_l^2 \right|^2 + n \sum_{k=0}^{n-2}\left( \left| W_{k, l}^3 \right|^2+\left| W_{k, l}^4 \right|^2 \right) \notag \\
		&\lesssim \tau^4 \left |\widehat{\phi^1_l} \right |^2 + n\tau^4 \sum_{k=1}^{n-1} \left |\widehat{\phi^{k+1}_l} - \widehat{\phi^{k}_l} \right |^2, \quad l \in \mathcal{T}_N, \quad 1 \leq n \leq T/\tau-1. 
	\end{align}
	Using Parseval's identity and \cref{eq:Jn_hat_est}, and changing the order of summation, we have
	\begin{align}\label{eq:est_Jn}
		\| \mathcal{J}^n \|_{L^2}^2 
		&= (b-a) \sum_{l \in \mathcal{T}_N} \left |\widehat{(\mathcal{J}^n)}_l \right |^2 \lesssim \tau^4 \sum_{l \in \mathcal{T}_N} \left |\widehat{\phi^1_l} \right |^2 + n \tau^4 \sum_{l \in \mathcal{T}_N} \sum_{k=1}^{n-1} \left |\widehat{\phi^{k+1}_l} - \widehat{\phi^{k}_l} \right |^2 \notag \\
		&\lesssim \tau^4 \| \phi^1 \|_{L^2}^2 + n \tau^4 \sum_{k=1}^{n-1} \sum_{l \in \mathcal{T}_N} \left |\widehat{\phi^{k+1}_l} - \widehat{\phi^{k}_l} \right |^2 \notag \\
		&\lesssim \tau^4 \| \phi^1 \|_{L^2}^2 + n \tau^4 \sum_{k=1}^{n-1} \| \phi^{k+1} - \phi^k \|_{L^2}^2, \qquad 1 \leq n \leq T/\tau-1. 
	\end{align}
	Recalling that $\phi^n_l = P_N dB(\psi(t_n))[\partial_t \psi(t_n)]$, by \cref{lem:diff_dB} and the boundedness of $P_N$, we have
	\begin{equation*}
		\| \phi^1 \|_{L^2} = \| P_N dB(\psi(t_1))[\partial_t \psi(t_1)] \|_{L^2} \leq \| dB(\psi(t_1))[\partial_t \psi(t_1)] \|_{L^2} \lesssim 1, 
	\end{equation*}
	and
	\begin{align*}
		\| \phi^{k+1} - \phi^k \|_{L^2} 
		&= \left \| P_N dB(\psi(t_{k+1}))[\partial_t \psi(t_{k+1})] - P_N dB(\psi(t_{k}))[\partial_t \psi(t_{k})] \right \|_{L^2} \\
		&\leq \| dB(\psi(t_{k+1}))[\partial_t \psi(t_{k+1})] - dB(\psi(t_{k}))[\partial_t \psi(t_{k})] \|_{L^2} \\
		&\lesssim \| \psi(t_{k+1}) - \psi(t_k) \|_{L^2} + \| \partial_t \psi(t_{k+1}) - \partial_t \psi(t_k) \|_{L^2} \lesssim \tau, 
	\end{align*}
	which, plugged into \cref{eq:est_Jn}, yields
	\begin{equation}\label{eq:est_4}
		\| \mathcal{J}^n \|_{L^2}^2 \lesssim \tau^4 + n^2 \tau^6 \lesssim \tau^4, \qquad 1 \leq n \leq T/\tau-1. 
	\end{equation}
	From \cref{eq:error_eq_iter}, using \cref{eq:e1_L2,eq:est_2,eq:est_3,eq:est_4}, and recalling \cref{eq:Jn_def}, we obtain
	\begin{equation}\label{eq:final_error_eq}
		\| \Ehn{n+1} \|_{L^2} \lesssim \tau^2 + h^4 + \tau \sum_{k=1}^n \| \Ehn{k} \|_{L^2}, \qquad 0 \leq n \leq T/\tau-1, 
	\end{equation} 
	where the case $n = 0$ follows from $\| \Ehn{1} \|_{L^2} \lesssim \tau^2 + h^4$ established in \cref{eq:e1_L2}. Then the proof of the $L^2$-norm error bound is completed by applying discrete Gronwall's inequality to \cref{eq:final_error_eq}. The $H^1$-norm error bound can be obtained by directly applying the inverse inequality $\| \phi \|_{H^1} \lesssim h^{-1} \| \phi \|_{L^2}$ for $\phi \in X_N$ \cite{book_spectral} as
	\begin{equation}\label{eq:improv_inverse}
		\| \ehn{n} \|_{H^1} \lesssim h^{-1}\| \ehn{n} \|_{L^2} \lesssim \tau^2 h^{-1} + h^3 \lesssim \tau^\frac{3}{2} + h^3, \quad 0 \leq n \leq T/\tau, 
	\end{equation}
	where we also use the fact that $\tau \leq h^2/(2\pi)$. The proof is then completed.  
\end{proof}

\section{Numerical results}\label{sec:num}
In this section, we shall show some numerical results to validate our error estimates and to demonstrate the superiority of the sEWI. We first test the convergence of the sEWI under various potential, nonlinearity and initial data. Then we perform some long-time simulation to show the near-conservation of mass and energy. To quantify the error, we introduce the following error functions:
\begin{equation*}
	e_{L^2}(t_n) := \| \psi(\cdot, t_n) - \psihn{n}  \|_{L^2}, \quad  e_{H^1}(t_n) := \| \psi(\cdot, t_n) - \psihn{n} \|_{H^1}, \quad 0 \leq n \leq T/\tau. 
\end{equation*}

We start with ``good" potential and nonlinearity and consider a 1D example. In the NLSE \cref{NLSE}, we choose $\Omega = (-16, 16)$, $\beta = 1$, $\sigma = 1.1$ and
\begin{equation}\label{eq:good}
	V(x) = \left( \frac{x^2 - 4}{16} \right)^{1.51} \times \left(1 - \frac{x^2}{16^2} \right)^2, \quad \psi_0(x) = x|x|^{2.51} e^{-\frac{x^2}{2}}, \quad x \in \Omega. 
\end{equation} 
Here in \cref{eq:good}, the potential function $V$ satisfies $V \in H^2_\text{per}(\Omega)$ and the initial datum $\psi_0 \in H^4(\Omega)$ is intentionally chosen as an odd function. 
Note that, with an odd initial datum, the exact solution will remain an odd function for all time $t \geq 0$, and thus indeed suffer from the low regularity of nonlinearity at the origin. 

In the numerical simulation, we choose $T = 1$. The ``exact" solution is computed by the sEWI-FS method \cref{eq:sEWI-FS_scheme} with $\tau = 10^{-5}$ and $h = 2^{-9}$. The numerical results are presented in \cref{fig:conv_dt_h_H4_ini_good}, where we show temporal errors with respect to the time step size $\tau$ in (a) and spatial errors with respect to the mesh size $h$ in (b). When computing temporal errors, we vary $\tau$ with fixed $h = 2^{-9}$, and when computing spatial errors, we vary $h$ with fixed $\tau = 10^{-5}$. 

From \cref{fig:conv_dt_h_H4_ini_good}, we can observe that for $H^2$-potential, $\sigma=1.1$ and $H^4$-initial data, the temporal convergence is second-order in $ L^2 $-norm and 1.5-order in $ H^1 $-norm; and the spatial convergence is forth-order in $ L^2 $-norm and third-order in $ H^1 $-norm. The results confirm our error estimates in \cref{thm:second_order_semi,thm:second_order_full} and show that they are sharp.  

\begin{figure}[htbp]
	\centering
	{\includegraphics[width=0.475\textwidth]{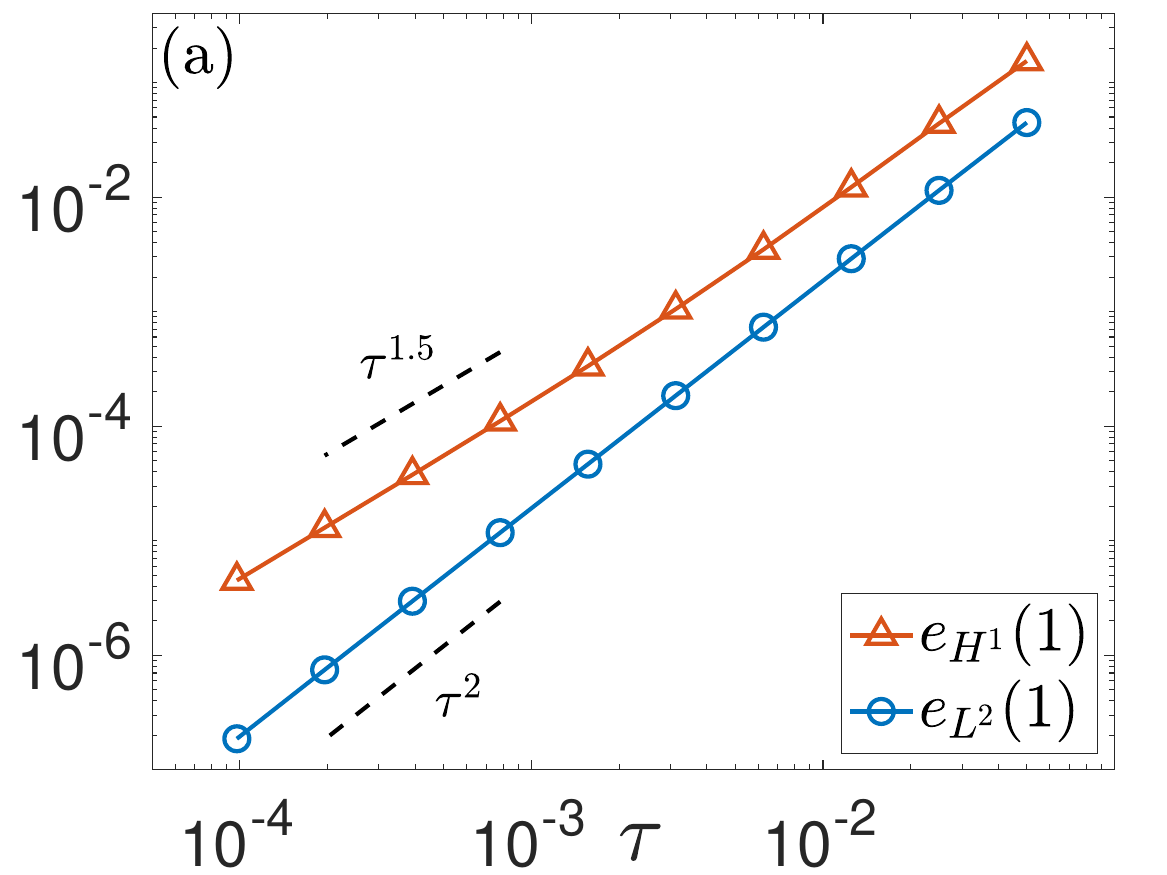}}\hspace{1em}
	{\includegraphics[width=0.475\textwidth]{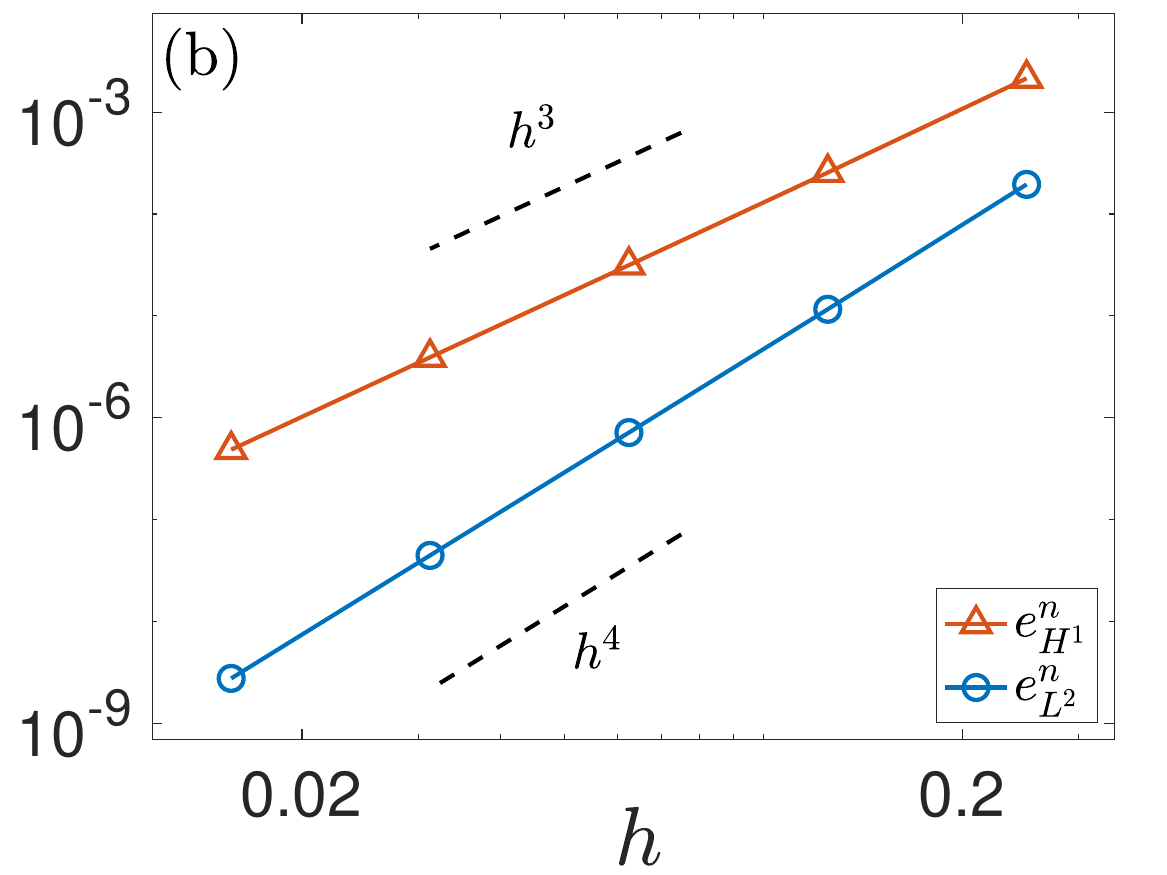}}
	\caption{$L^2$- and $H^1$-errors of the sEWI for the NLSE with $\sigma = 1.1$ and $V \in H^2$ given in \cref{eq:good}: (a) temporal errors and (b) spatial errors.}
	\label{fig:conv_dt_h_H4_ini_good}
\end{figure}

We then consider low regularity potential and nonlinearity. To show the validity of our results in general dimensions, we shall present a 2D example. In \cref{NLSE}, we choose $\Omega = (-8, 8) \times (-8, 8)$, $\beta = 1$, $\sigma = 0.1$ and  
\begin{equation}\label{eq:lowreg}
	V(\vx) = \left\{
	\begin{aligned}
		&10, &&|x| \leq 2, |y| \leq 2, \\
		&0, &&\text{otherwise}, 
	\end{aligned}
	\right.
	, \quad \psi_0(\vx) = x|x|^{0.51} e^{-\frac{|\vx|^2}{2}}, \quad \vx = (x, y)^T \in \Omega. 
\end{equation}
It follows immediately that $V \in L^\infty(\Omega)$ and $\psi_0 \in H^2(\Omega)$ in \cref{eq:lowreg}. 

In computation, we choose $T = 0.25$. The ``exact" solution is computed by the sEWI-FS method with $\tau = 10^{-5}$ and $h_x = h_y = 2^{-7}$, where $h_x$ and $h_y$ are the mesh sizes in $x$ and $y$ directions, respectively. We plot temporal and spatial errors in \cref{fig:conv_dt_h_H2_ini_low_reg}, following the same manner in the previous example. 

From \cref{fig:conv_dt_h_H2_ini_low_reg}, we observe that for $L^\infty$-potential, $\sigma = 0.1$ and $H^2$-initial data, the temporal convergence of the sEWI is first-order in $ L^2 $-norm and half-order in $ H^1 $-norm; and the spatial convergence is second-order in $ L^2 $-norm and first-order in $ H^1 $-norm. The observation validates our error estimates in \cref{thm:first_order_semi,thm:first_order_full} and indicates that they are sharp. 

\begin{figure}[htbp]
	\centering
	{\includegraphics[width=0.475\textwidth]{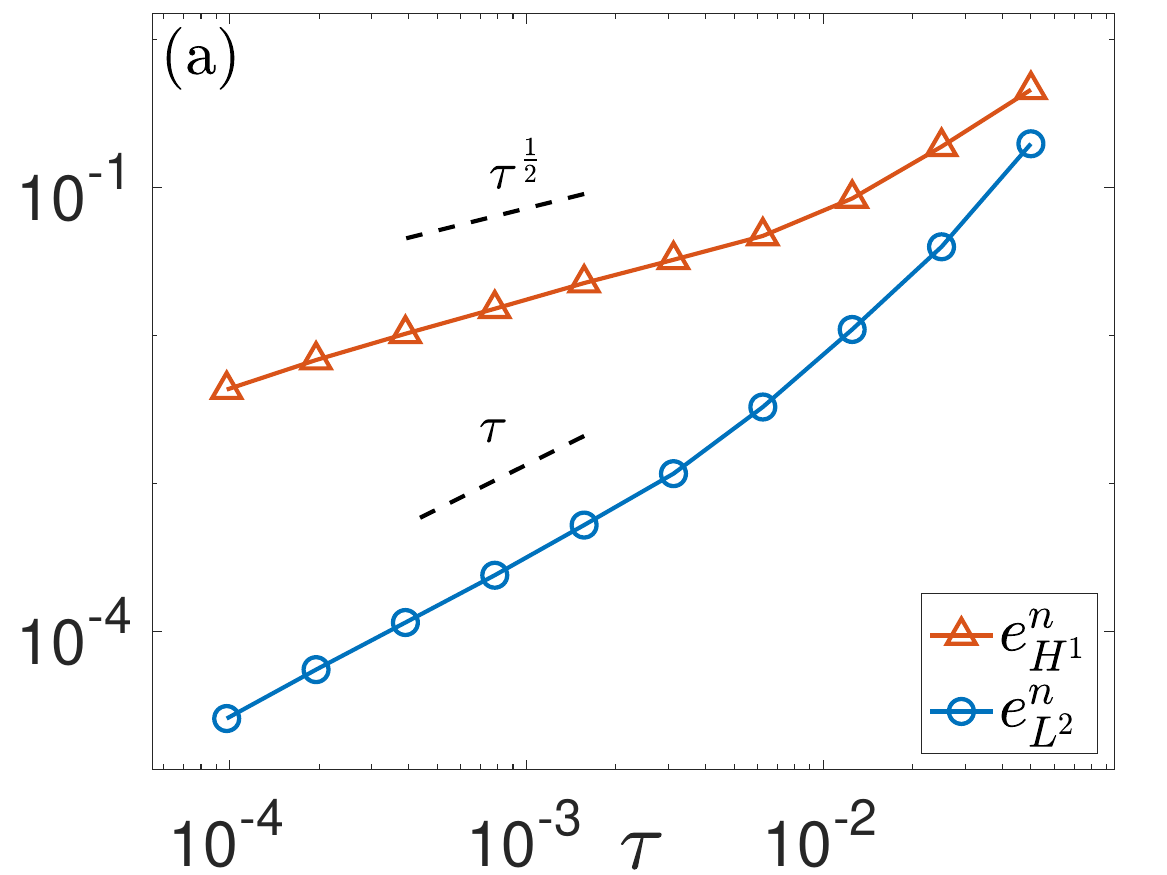}}\hspace{1em}
	{\includegraphics[width=0.475\textwidth]{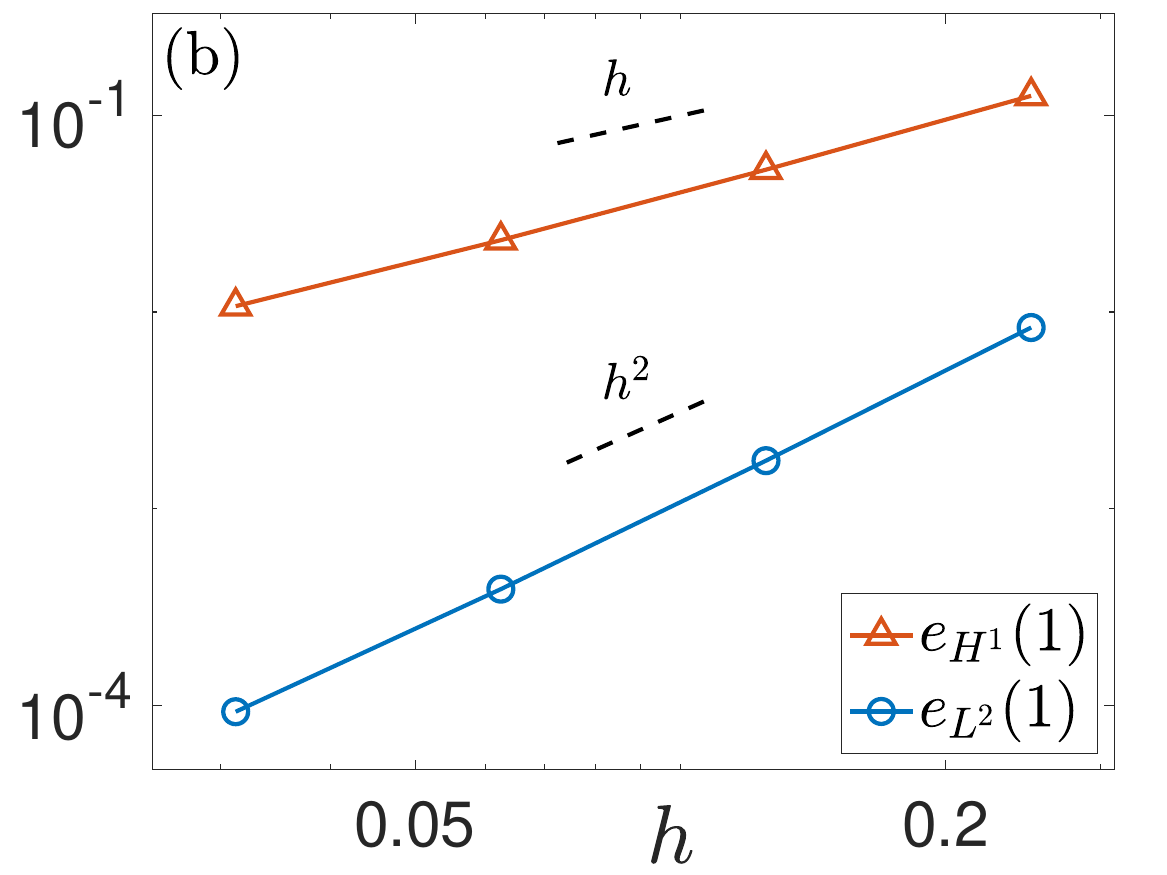}}
	\caption{$L^2$- and $H^1$-errors of the sEWI for the NLSE with $\sigma = 0.1$ and $V \in L^\infty$ given in \cref{eq:lowreg}: (a) temporal errors and (b) spatial errors.}
	\label{fig:conv_dt_h_H2_ini_low_reg}
\end{figure}

Then we consider another example in 1D under improved regularity assumptions. We set $T=1$ and choose $\Omega = (-16, 16)$, $\beta = -1$, $\sigma = 0.5$ and $V(x) \equiv 0 $ with the same $H^4$-initial datum $\psi_0$ given in \cref{eq:good}. The ``exact" solution is computed with $\tau = 10^{-5}$ and $h = 2^{-9}$. Different from the two cases above, we compute the errors of the sEWI-FS with $h = \sqrt{10 \tau}$ to satisfy the constraint $\tau \leq h^2/(2\pi)$. For comparison, we also plot the errors computed by varying $\tau$ with $h = 2^{-9}$ fixed (corresponding to the time semi-discrete case). The numerical results for the two cases are exhibited in \cref{fig:conv_dt_h_H4_ini} (a) and (b), respectively. 

From \cref{fig:conv_dt_h_H4_ini}, we can observe that, the $L^2$-error of the sEWI-FS method is of $O(\tau^2)$ and the $H^1$-error is of $O(\tau^\frac{3}{2})$ with and without the time step size restriction $\tau \leq h^2/(2\pi)$. The numerical results validate our error estimates in \cref{thm:second_order_improved}, but also suggest that the time step size restriction might be relaxed. 
We will try to investigate this phenomenon in our future work. 

\begin{figure}[htbp]
	\centering
	{\includegraphics[width=0.475\textwidth]{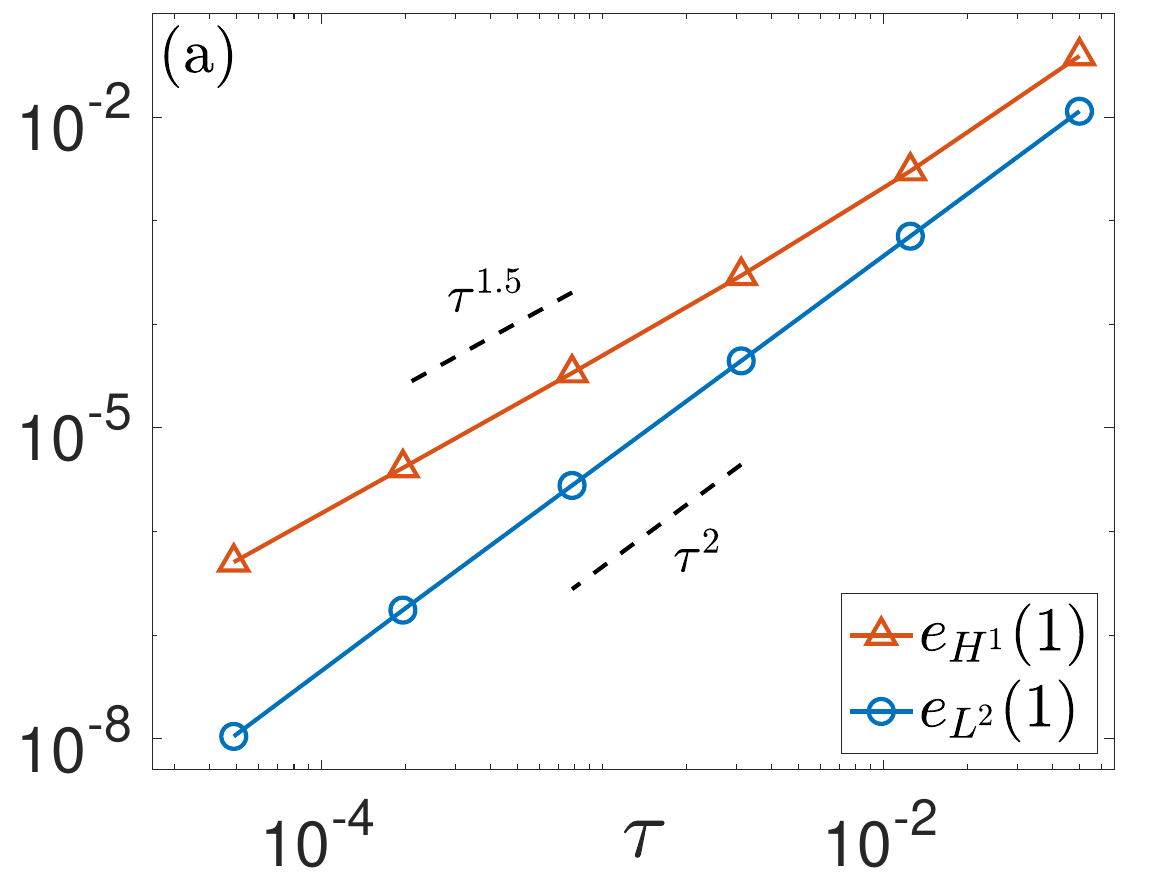}}\hspace{1em}
	{\includegraphics[width=0.475\textwidth]{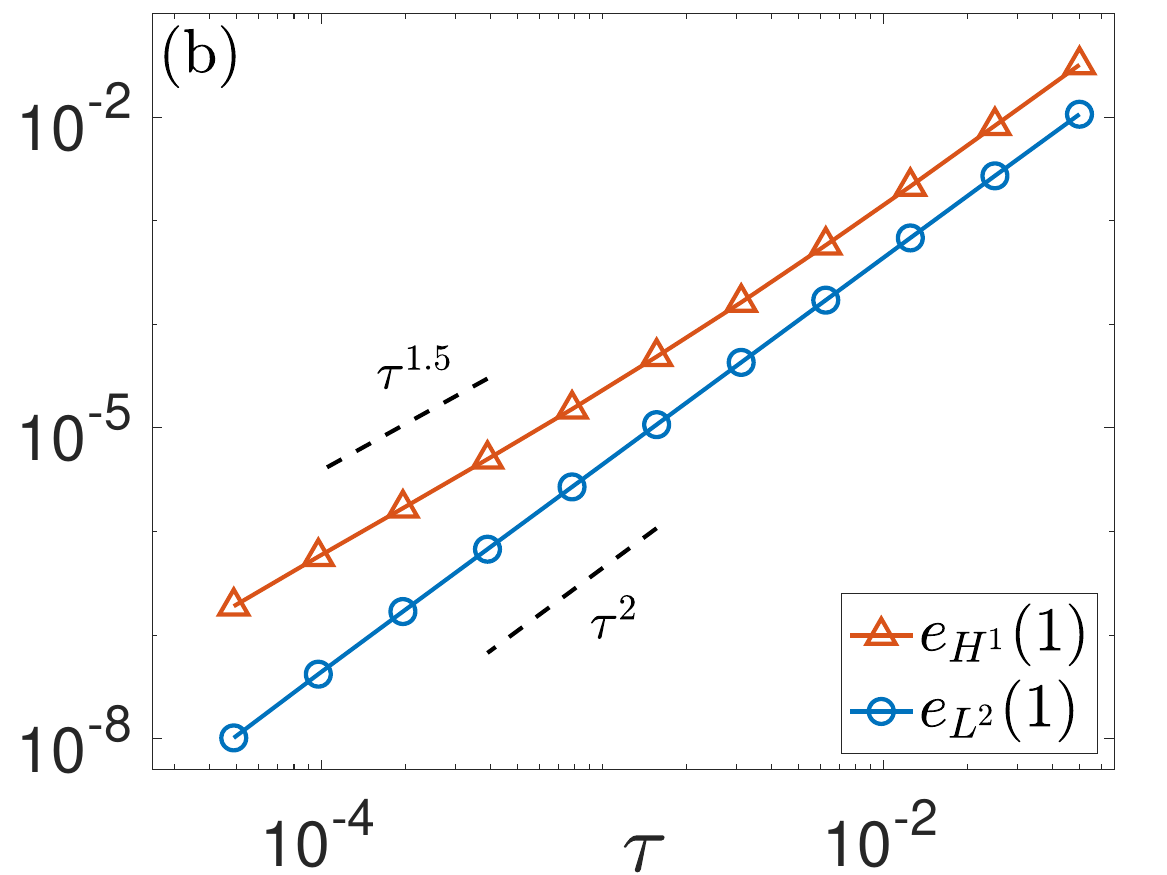}}
	\caption{$L^2$- and $H^1$-errors of the sEWI for the NLSE with $\sigma = 0.5$: (a) errors computed with $h = \sqrt{10\tau}$ and (b) errors computed with $h  = w^{-9}$ fixed.}
	\label{fig:conv_dt_h_H4_ini}
\end{figure}

It should be noted that while all the numerical results shown in \cref{fig:conv_dt_h_H4_ini_good,fig:conv_dt_h_H2_ini_low_reg,fig:conv_dt_h_H4_ini} indicate the sharpness of the convergence rates under given assumptions, they do not imply that all the assumptions are necessary. In particular, the same convergence rate observed in \cref{fig:conv_dt_h_H4_ini} can also be seen when reducing $\sigma$ from $0.5$ to $0.1$. This suggests that the assumption on nonlinearity in \cref{thm:second_order_improved} might be further relaxed. However, the rigorous analysis is challenging and will be considered in our future work. 

In the following, we test the long-time behavior of the sEWI. We consider two cases with potential and nonlinearity of different regularity and compute relative errors of mass and energy up to very long time $T = 500$. In both cases, we fix $\Omega = (-16, 16)$, $\beta = 1$ and choose an odd initial datum $\psi_0(x) = x e^{-x^2/2}$ for $x \in \Omega$. In the case of ``good" potential and nonlinearity, we choose $\sigma = 1.1$ and an $H^2$-potential $V$ given in \cref{eq:good}. In the case of low regularity potential and nonlinearity, we choose $\sigma = 0.1$ and an $L^\infty$-potential $V$ given by
\begin{equation}\label{eq:V_low_1D}
	V(x) = \left\{
	\begin{aligned}
		&10, &&x \geq 4 \text{ or } x \leq -4, \\
		&0, &&\text{otherwise}, 
	\end{aligned}
	\right., \quad x \in \Omega. 
\end{equation}
The numerical results are plotted in \cref{fig:good_long_time,fig:low_reg_long_time} for the two cases respectively. We can clearly observe that the mass and energy are nearly conserved for very long time in both cases: the relative errors of mass and energy satisfy (based on numerical results)
\begin{equation*}
	\frac{|M(\psihn{n}) - M(\psi_0)|}{|M(\psi_0)|} \leq C_1 \tau^2, \quad \frac{|E(\psihn{n}) - E(\psi_0)|}{|E(\psi_0)|} \leq C_2 \tau^2, \quad 0 \leq n\tau \leq T = 500,  
\end{equation*}
where $C_1$ and $C_2$ seem to be independent of $T$. It is rather surprising that even with extremely low regularity potential and nonlinearity, one can still obtain second-order temporal convergence of mass and energy. The rigorous analysis of these phenomena, including the long-time near conservation property and the high-order temporal convergence of mass and energy, will be taken as our future work. 

\begin{figure}[htbp]
	\centering
	{\includegraphics[width=0.475\textwidth]{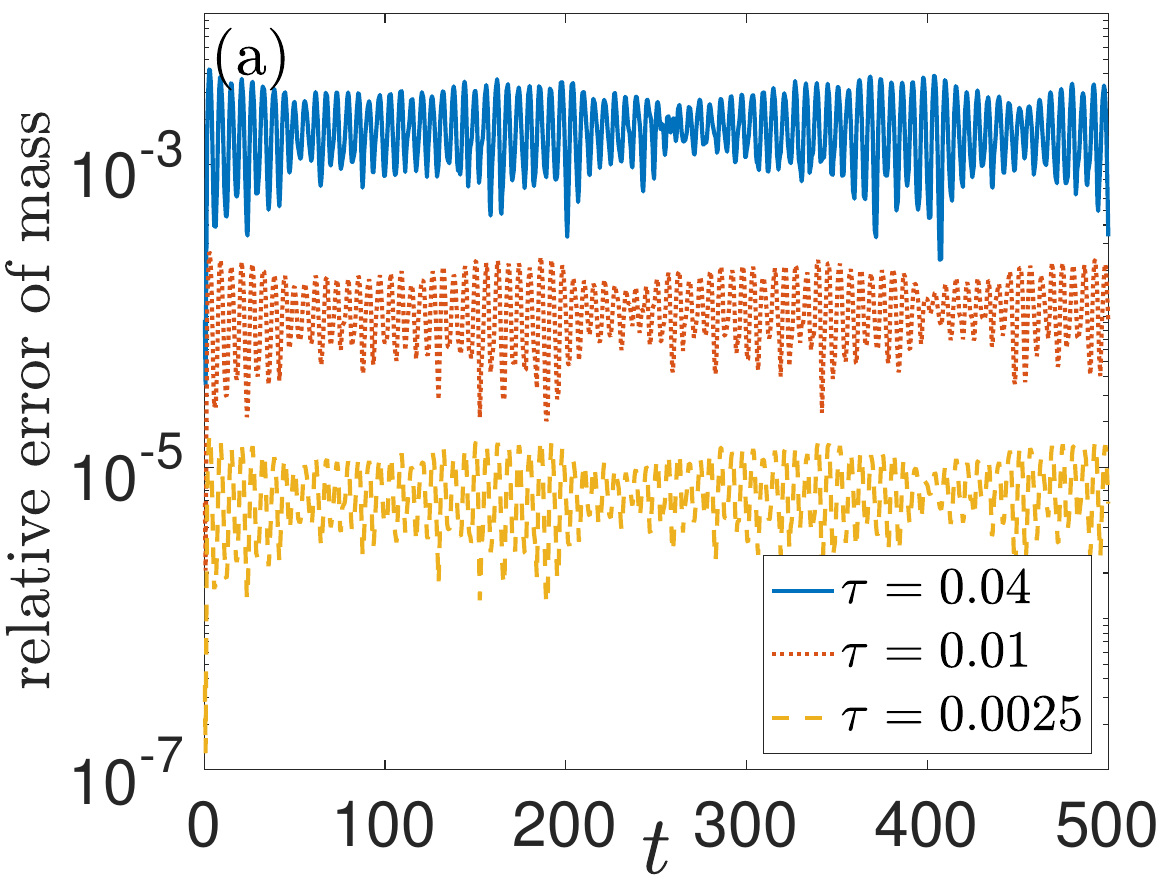}}\hspace{1em}
	{\includegraphics[width=0.475\textwidth]{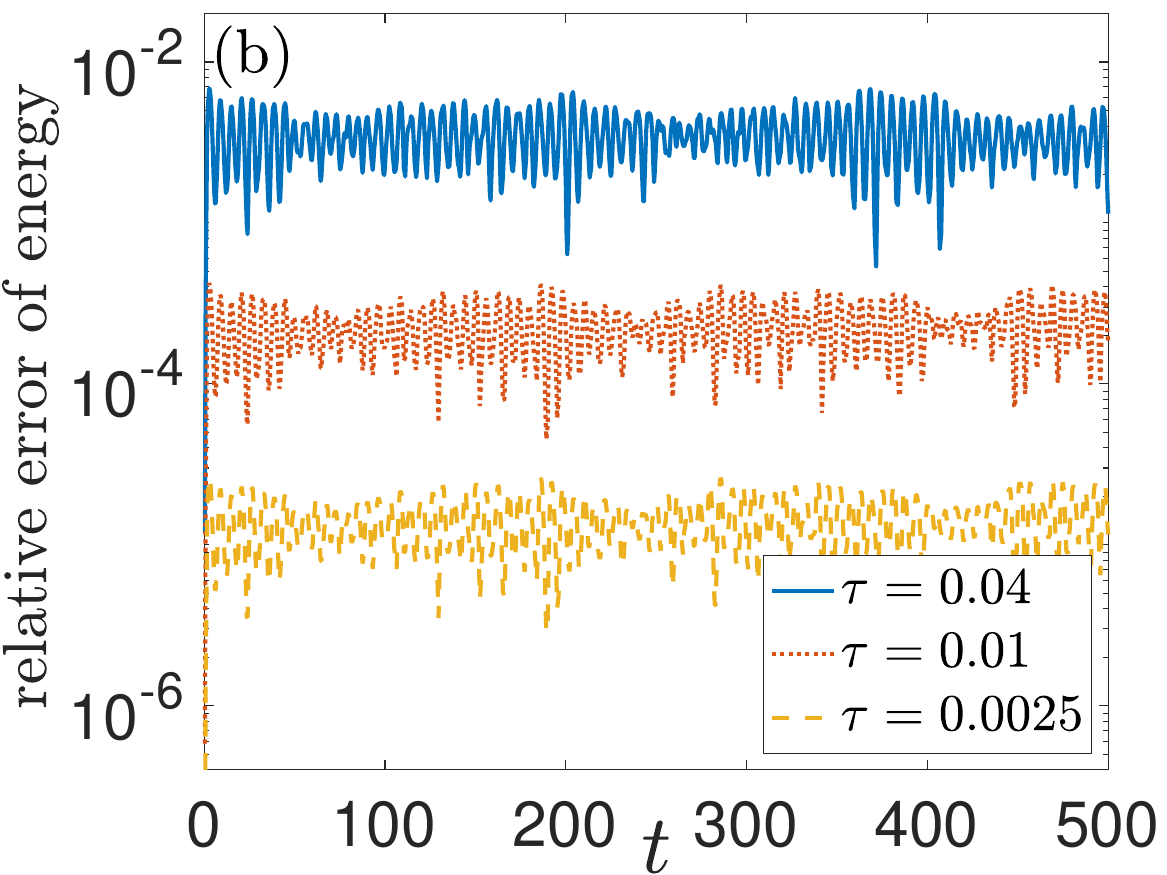}}
	\caption{Relative errors of (a) mass and (b) energy of the sEWI for the NLSE with $\sigma = 1.1$ and $V \in H^2$ given in \cref{eq:good}.}
	\label{fig:low_reg_long_time}
\end{figure}

\begin{figure}[htbp]
	\centering
	{\includegraphics[width=0.475\textwidth]{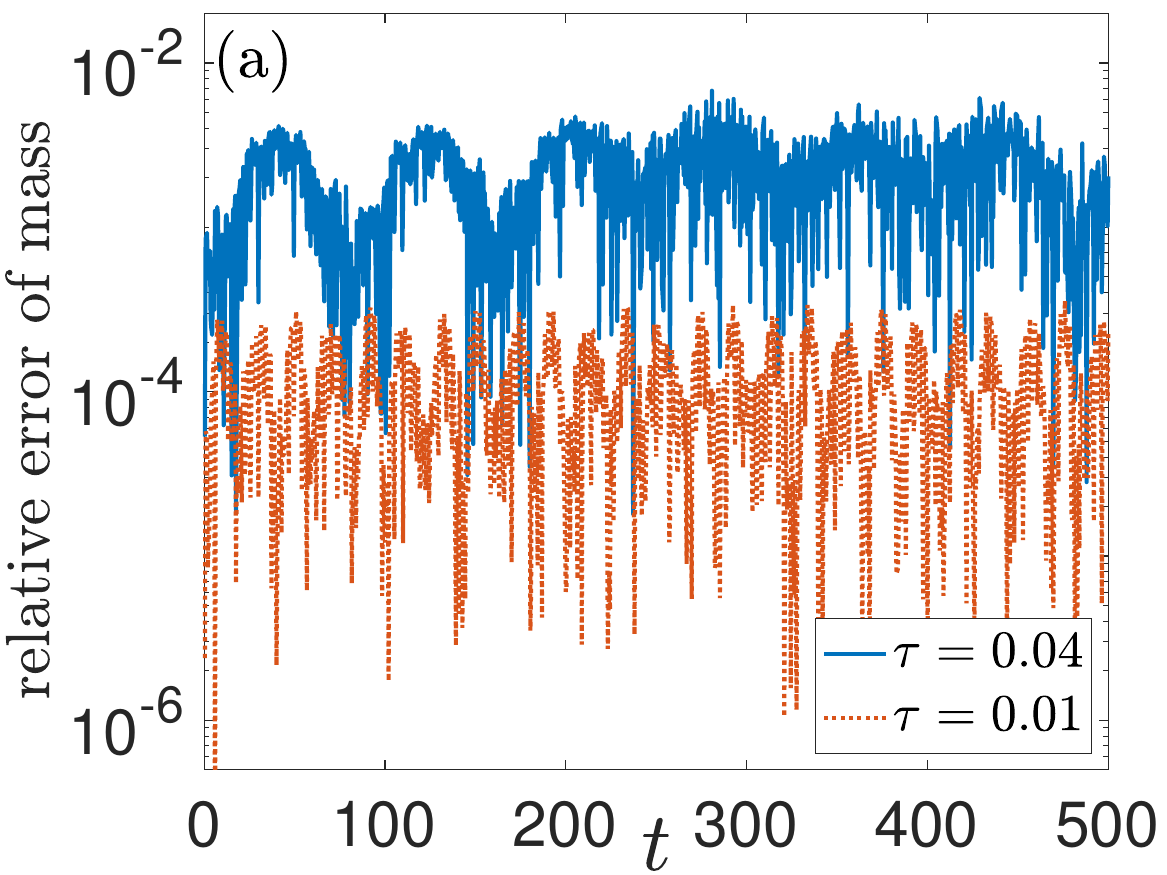}}\hspace{1em}
	{\includegraphics[width=0.475\textwidth]{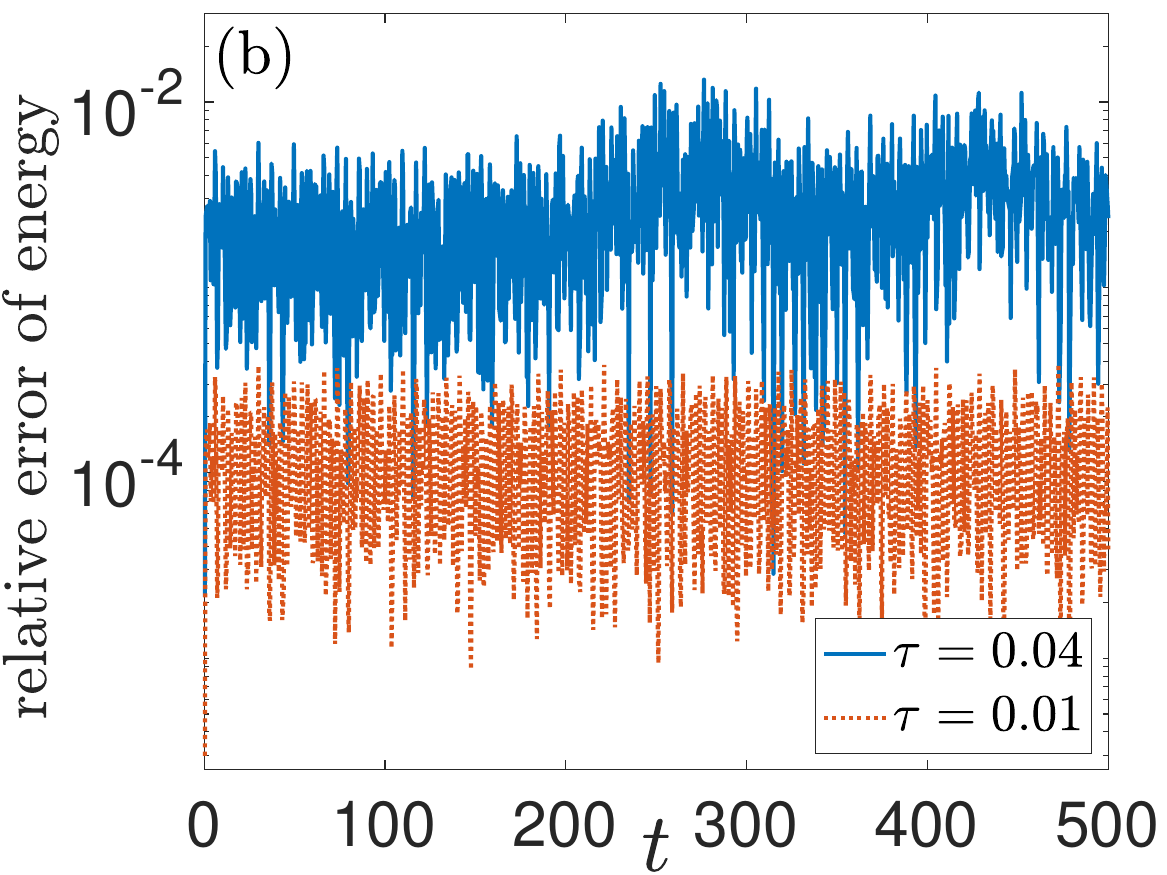}}
	\caption{Relative errors of (a) mass and (b) energy of the sEWI for the NLSE with $\sigma = 0.1$ and $V \in L^\infty$ given in \cref{eq:V_low_1D}.}
	\label{fig:good_long_time}
\end{figure}

Finally, we perform long-time simulation of a benchmark problem posed in \cite{henning_mathcomp} to showcase the superiority of the sEWI even in the smooth setting where the potential, nonlinearity and initial datum are all smooth. In the NLSE \cref{NLSE}, we take $ V(x) \equiv 0 $, $\beta = -2$ and $\sigma = 1$ with $\Omega  = (-16, 16)$. The computational time is taken as $T = 200$ with the initial datum given by
\begin{equation}
	\psi_0(x) = \frac{8(9e^{-4x} + 16e^{4x}) - 32(4 e^{-2x} + 9 e^{2x})}{-128 + 4e^{-6x} + 16e^{6x} + 81e^{-2x} + 64e^{2x}}, \quad x \in \Omega. 
\end{equation}
The exact solution can be given analytically by (24) of \cite{henning_mathcomp}. As discussed in \cite{henning_mathcomp}, although it is a 1D experiment with known analytical solution, the problem is extremely hard to solve numerically. In computation, we take $\tau = 2.5 \times 10^{-6}$ and $h = 2^{-6}$, and the total computational time is about 4 hours on a personal computer. The relative errors of mass and energy as well as the density $|\psi|^2$ are plotted in \cref{fig:benchmark}. We can observe not only mass and energy are near conserved up to long time but also the density is correctly approximated without any evident difference. This example shows the sEWI is still advantageous in the smooth setting to simulate complicated interaction up to long time. 

\begin{figure}[htbp]
	\centering
	{\includegraphics[width=0.475\textwidth]{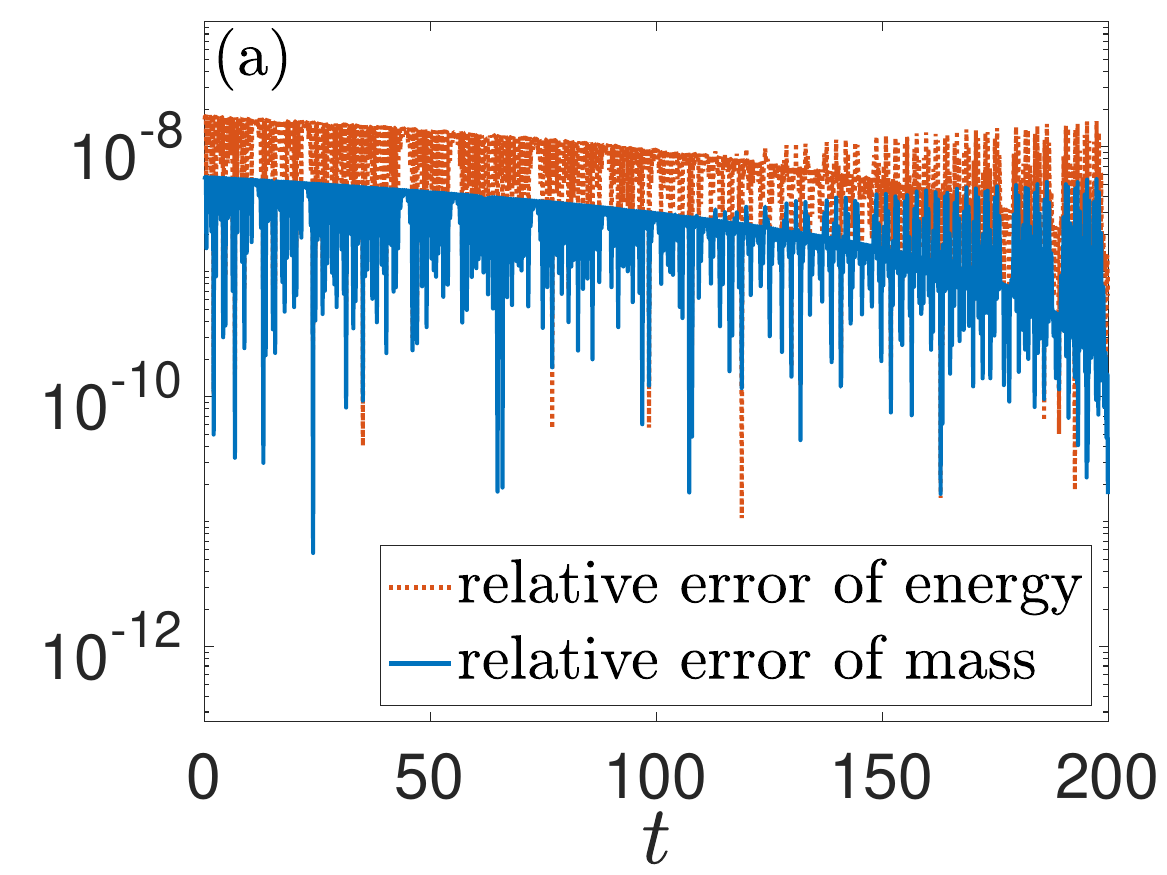}}\hspace{1em}
	{\includegraphics[width=0.475\textwidth]{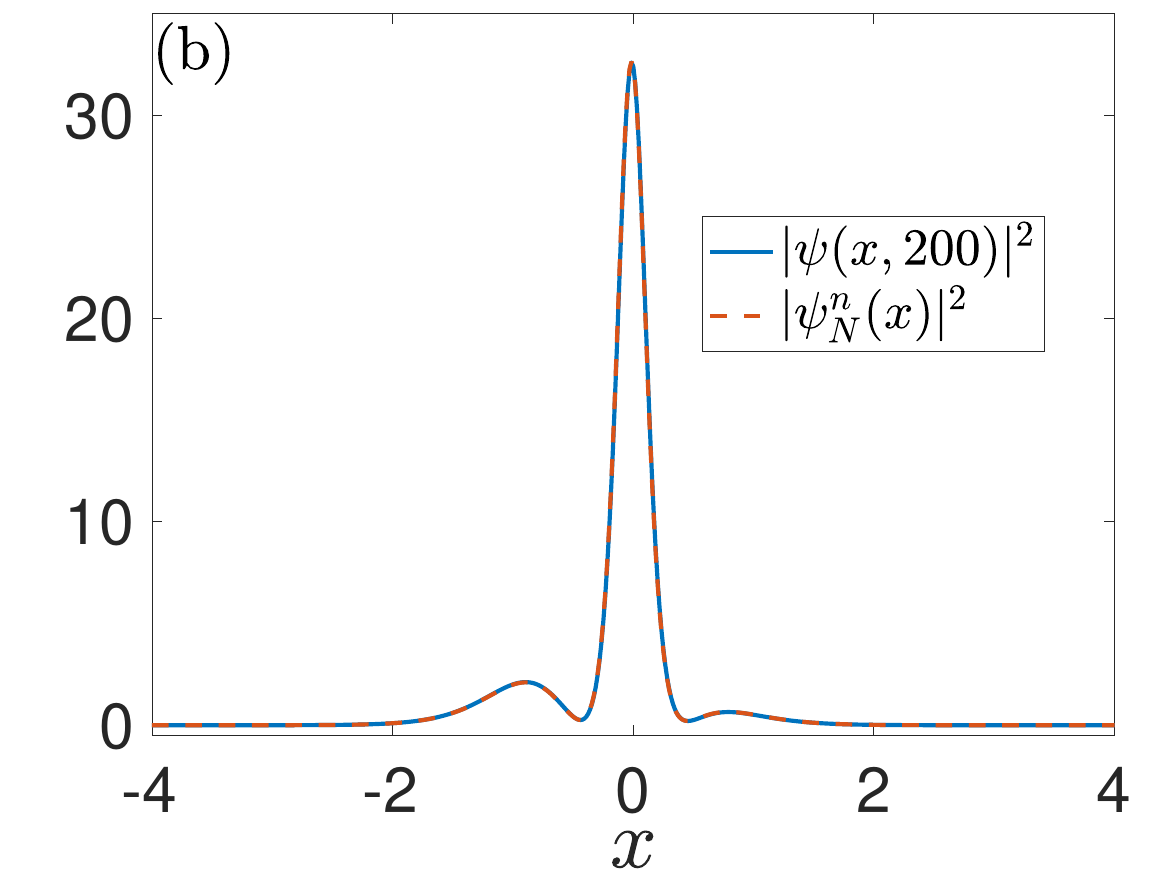}}
	\caption{A benchmark problem: (a) relative errors of mass and energy (b) plots of the density}
	\label{fig:benchmark}
\end{figure}

\section{Conclusion}
We proposed and analyzed an explicit and  symmetric exponential wave integrator (sEWI) for the nonlinear Schr\"odinger equation (NLSE) with low regularity potential and nonlinearity. The sEWI is symmetric, explicit and stable under a time step size restriction independent of the mesh size. Moreover, it exhibits excellent performance in solving the NLSE with low regularity potential and/or nonlinearity as well as in the long-time simulation of the NLSE. Rigorous error estimates of the sEWI were established under various regularity assumptions on potential and nonlinearity. Extended numerical results were reported to validate our error estimates and to demonstrate superb long-time behavior of the sEWI. 

\bibliographystyle{siamplain}

\end{document}